\renewcommand{\O}{\Omega }
\newcommand{\g}{\gamma }
\newcommand{\R}{{\mathbb R}}
\newcommand{\ren}{\R^{N}}
\newcommand{\N}{{\mathbb N}}
\renewcommand{\l }{\lambda }
\renewcommand{\O }{\Omega }
\newcommand{\iy}{\infty}
\newcommand{\inn}{\text{  in   }}
\newcommand{\dyle}{\displaystyle}
\newcommand{\dint}{\dyle\int}
\newcommand{\e }{\varepsilon}
\def\lip{ \mbox{ Lip }}
\renewcommand{\ge }{\geqslant}
\renewcommand{\geq }{\geqslant}
\renewcommand{\le }{\leqslant}
\renewcommand{\leq }{\leqslant}
\newenvironment{pfn}[1]{\noindent{\it Proof of
    {#1}.\enspace}}{\hfill\qed\medskip}\newtheorem{Theorem}{Theorem}[section]
\newtheorem{Lemma}[Theorem]{Lemma}
\newtheorem{Proposition}[Theorem]{Proposition}
\theoremstyle{definition}
\newtheorem{Definition}[Theorem]{Definition}
\newtheorem{remark}[Theorem]{Remark}
\begin{document}

\title[Effect of the Hardy potential on the summability]
{The effect of the Hardy potential in some Calderón-Zygmund properties for the fractional Laplacian}
\thanks{Work partially supported by Project MTM2013--40846-P, MINECO, Spain}
\author[B. Abdellaoui, M. Medina, I. Peral, A. Primo ]{Boumediene abdellaoui, Mar\'{i}a Medina, Ireneo Peral, Ana Primo }
\address{\hbox{\parbox{5.7in}{\medskip\noindent {$*$ Laboratoire d'Analyse Nonlin\'eaire et Math\'ematiques
Appliqu\'ees. \hfill \break\indent D\'epartement de
Math\'ematiques, Universit\'e Abou Bakr Belka\"{\i}d, Tlemcen,
\hfill\break\indent Tlemcen 13000, Algeria.}}}}
\address{\hbox{\parbox{5.7in}{\medskip\noindent{Departamento de Matem\'aticas,\\ Universidad Aut\'onoma de Madrid,\\
        28049, Madrid, Spain. \\[3pt]
        \em{E-mail addresses: }{\tt boumediene.abdellaoui@uam.es, \tt ireneo.peral@uam.es, maria.medina@uam.es, ana.primo@uam.es
         }.}}}}
\date{\today}
\thanks{2010 {\it Mathematics Subject Classification.   35B25, 35B65, 35J58, 35R09, 47G20. }  \\
   \indent {\it Keywords.}  Fractional Laplacian equation, Hardy's
  inequality, existence and nonexistence results,  Harnack inequality for Singular fractional Laplacian, Calderón-Zygmund regularity  }
\begin{abstract}
The goal of this paper is to study the effect of the Hardy
potential on the existence and summability of solutions to a class of
nonlocal elliptic problems
$$
\left\{\begin{array}{rcll}
(-\Delta)^s u-\lambda \dfrac{u}{|x|^{2s}}&=&f(x,u) &\hbox{  in } \Omega,\\
u&=&0 &\hbox{  in } \mathbb{R}^N\setminus\Omega,\\ u&>&0 &\hbox{
in }\Omega,
\end{array}\right.
$$
where $(-\Delta)^s$, $s\in(0,1)$, is the fractional laplacian operator,
$\Omega\subset \ren$ is a bounded domain with Lipschitz boundary
such that $0\in\Omega$ and $N>2s$. We will mainly consider the solvability in
two cases:
\begin{enumerate}
\item The  linear problem, that is, $f(x,t)=f(x)$, where according to the
summability of the datum $f$ and the parameter $\lambda$ we give
the summability of the solution $u$.
\item The problem with a nonlinear term $f(x,t)=\frac{h(x)}{t^\sigma}$ for $t>0$. In this case, existence and regularity will depend on the value of $\sigma$ and on the summability of $h$.
\end{enumerate}

Looking for optimal results we will need a weak Harnack inequality for elliptic operators with \emph{singular coefficients} that seems to be new.
\end{abstract}

\maketitle

\section{Introduction and statement of the problem}
This work deals with the following problem
\begin{equation}\label{tlem}
\left\{\begin{array}{rcll}
(-\Delta)^s u-\lambda \dfrac{u}{|x|^{2s}}&=&f(x,u) &\hbox{  in } \Omega,\\
u&>&0 &\hbox{  in } \Omega,\\ u&=&0 &\hbox{ in }
\mathbb{R}^N\setminus\Omega,
\end{array}\right.
\end{equation}
where $s\in(0,1)$ is such that $2s<N$, $\Omega\subset \mathbb{R}^N$ is
a bounded regular domain containing the origin and $f$
is a measurable function satisfying suitable hypotheses.

Recall that we define the fractional Laplacian
$(-\Delta)^s $ as the operator given by the Fourier multiplier
$|\xi|^{2s}$. That is, for $u\in \mathcal{S}(\mathbb{R}^N)$,
$$\mathcal{F}((-\Delta)^s u
)(\xi):=|\xi|^{2s}\mathcal{F}(u)(\xi).$$

A computation involving the inverse Fourier transform
of a homogeneous tempered distribution gives the formal
expression  of the fractional Laplacian as an integral operator,
see for instance \cite{Stein}. More precisely, if
$u\in\mathcal{S}(\mathbb{R}^N)$,
\begin{equation}\label{operator}
(-\Delta)^{s}u(x):=a_{N,s}\mbox{ P.V. }\int_{\mathbb{R}^{N}}{\frac{u(x)-u(y)}{|x-y|^{N+2s}}\, dy},\, \,\qquad  s\in(0,1),
\end{equation}
where
\begin{equation} \label{const}
a_{N,s}:=\left(\int_{\mathbb{R}^{N}}{\dfrac{1-cos(\xi_1)}{|\xi|^{N+2s}}d\xi}\right)^{-1}=2^{2s-1}\pi^{-\frac
N2}\frac{\Gamma(\frac{N+2s}{2})}{|\Gamma(-s)|}.
\end{equation}

Due to its second term, problem \eqref{tlem} is related to the following Hardy
inequality, proved in \cite{He} (see also \cite{B, FLS, SW,Y}),
\begin{equation}\label{Hardy}
\dint_{\ren} \,|\xi|^{2s} \hat{u}^2\,d\xi\geq
\Lambda_{N,s}\,\dint_{\ren} |x|^{-2s} u^2\,dx\,\,\,\forall u\in
\mathcal{C}^{\infty}_{0}(\ren),
\end{equation}
where
\begin{equation}\label{cteHardy}
\Lambda_{N,s}:=
2^{2s}\dfrac{\Gamma^2(\frac{N+2s}{4})}{\Gamma^2(\frac{N-2s}{4})}
\end{equation}
 is optimal  and it  is not attained. Moreover,
 $$\lim_{s\to 1}\Lambda_{N,s}=\left(\dfrac{N-2}{2}\right)^2,$$
the classical Hardy constant.

The Hardy inequality \eqref{Hardy} plays an important role, for instance, in a general proof of
 the \textit{ stability of the relativistic matter}, see \cite{FLS}.
 We can also rewrite inequality \eqref{Hardy} in the
form
\begin{equation*}\label{hardy}
\frac{a_{N,s}}{2}\int_{\ren}\int_{\ren}{\frac{|u(x)-u(y)|^2}{|x-y|^{N+2s}}}\,
dx\, dy\geq
\Lambda_{N,s}\int_{\ren} {\frac{u^2}{|x|^{2s}}\,dx},\,u\in
\mathcal{C}^{\infty}_{0}(\ren),
\end{equation*}
which we will often use along the paper.
As we will see, this critical value $\Lambda_{N,s}$ will also play a fundamental role concerning solvability.
In particular, we already know thaht for $\lambda>\Lambda_{N,s}$ problem \eqref{tlem} has no positive
supersolution (see for example \cite{BMP, F}). Hence, from now on we will assume  $0<\lambda\leq \Lambda_{N,s}$.
\medskip

If $f(x,s)=f(x)$, problem \eqref{tlem} is
reduced to the linear case
\begin{equation}\label{prob}
\left\{\begin{array}{rcll}
(-\Delta)^s u-\lambda \dfrac{u}{|x|^{2s}}&=&f(x) &\hbox{  in } \Omega,\\
u&=&0 &\hbox{  in } \mathbb{R}^N\setminus\Omega,
\end{array}
\right.
\end{equation}
and our first goal will be to obtain the optimal summability of $u$ according
to the summability of the datum $f$ and the parameter $\lambda$. The case $\lambda=0$ can be found in \cite{LPPS}, where some Calderón-Zygmund type results are obtained.

Since this problem is linear, we will assume, without loss of generality,  that the datum $f$ is positive and we will deal with positive solutions.

The influence of the Hardy potential in the local case ($s=1$) was
studied in \cite{BOP}. The main results there can be summarized as follows. Suppose $f\in L^m(\Omega)$ and let
$$\lambda(m):=\dfrac{N(m-1)(N-2m)}{m^2}.$$
Then if $0<\lambda<\lambda(m)$ the solution to problem \eqref{prob} verifies the same Calderón-Zygmung  inequalities as for $\lambda=0$. On the contrary, if $\lambda\ge \lambda(m)$ it is possible to find counterexamples of these results, so the regularity does not hold (see \cite{BOP} for details).

In particular, if $m>\frac{N}2$ the solutions are unbounded, and if $m=1$ there is no solution in general. Indeed, the necessary and sufficient condition in order to have solvability is to assume the following integrability of the datum with respect to the weight,
$$
\dint_{\Omega} f |x|^{-\alpha_1}\,dx<\infty,
$$
where $\alpha_{1}:= \dfrac{N-2}{2}-\sqrt{\Big( \dfrac{N-2}{2}\Big)^2- \lambda}$ (see  \cite{AP1}).

In order to find an analogous optimal condition for problem \eqref{prob}, we will need a \textit{weak Harnack inequality} for a singular weighted nonlocal operator (that we will define in \eqref{Ltilde0}). This study, performed in Section 3, requires the combination of techniques on elliptic operators  and very involved computations on nonlocal radial integrals, and it provides the precise behavior of the solutions around the origin. This result will be the key in the proofs of existence and regularity in the next Sections.

%
As an application and as a complement to the results in \cite{BMP}, we will also study a semilinear problem which is singular at the boundary. More precisely, we will consider  the problem
\begin{equation}\label{tlem1}
\left\{\begin{array}{rcll}
(-\Delta)^s u-\lambda \dfrac{u}{|x|^{2s}}&=&\dfrac{h(x)}{u^\sigma} &\hbox{  in } \Omega,\\
u&>&0 &\hbox{  in } \Omega,\\
u&=&0 &\hbox{  in } \mathbb{R}^N\setminus\Omega.\\
\end{array}
\right.
\end{equation}
The local case ($s=1$) with $\lambda=0$ was studied in \cite{BO}. Here the
authors proved that for all $h\in L^1(\Omega)$, there exists at
least one distributional solution. Regularity is
obtained according to the regularity of $h$ and the value of $\sigma$.
\medskip

The main purpose of this work is to obtain the same kind of results for the fractional Laplacian
framework, whose nonlocal behavior introduces new difficulties.
Some partial results have been already obtained in \cite{AA}, including  the $p$-Laplacian like operator.

The paper is organized as follows. In Section
\ref{prim} we precise the meaning  of solutions that will be used
along the work, with the corresponding functional setting. Some useful
tools as the Picone inequality, compactness results and certain
algebraic inequalities are also proved here.

In Section \ref{HH} we prove a
weighted singular version of the Harnack inequality. Notice that,
using the \emph{ground state transformation} stated in Lemma
\ref{GS}, the weak Harnack inequality gives the exact blow up rate
for the positive supersolutions to \eqref{tlem} near the
origin. As we said, this theorem will be the key for the optimality in the results of the following sections.

In Section \ref{s4} we treat the linear problem \eqref{prob}. According to $\lambda$ and the summability of $f$, we find the optimal summability of the solution $u$ for certain values of the spectral parameter $\lambda$. In particular, we see that the local techniques applied in \cite{BOP} do not give complete information in this framework, leaving the optimality for certain ranges of $\lambda$ as an open problem. We analyze this situation in detail in this section.

Finally, last section is devoted to study problem \eqref{tlem1}. We prove
existence and regularity results depending on the value of $\sigma$.

\section{Functional setting and useful tools}\label{prim}
Let $s\in (0,1)$. For any $p\in [1,\infty) $ and
$\Omega\subseteq \ren$, we define $W^{s,p}(\Omega)$ as follows,
$$
W^{s,p}(\Omega):= \left\{ u\in L^{p}(\Omega)\mbox{ s.t. } \,
\dfrac{|u(x)-u(y)|}{|x-y|^{\frac{N}{p}+s}} \in
L^{p}(\Omega\times\Omega)\right\}.
$$
We focus on the case $p=2$, where the fractional Sobolev spaces
$H^{s}(\Omega):=W^{s,2}(\Omega)$ turn out to be Hilbert spaces.
Moreover, if $\Omega=\ren$, the Fourier transform provides an alternative definition.
\begin{Definition}\label{Hs} {For  $0<s<1$,} we define  the fractional Sobolev space of order $s$ as
$$H^s(\mathbb{R}^N):=\{u\in L^2(\mathbb{R}^N)\,\mbox{ s.t. }\, |\xi|^{s}\mathcal{F}(u)(\xi)\in L^2(\mathbb{R}^N)\}.$$
\end{Definition}
Hence by Plancherel identity, we obtain a new expression for the norm of the Hilbert space $H^s(\mathbb{R}^N)$ (see \cite{FLS} for a detailed proof).
\begin{Proposition}\label{norma} Let $N\ge 1$ and $0<s<1$. Then for all $u\in H^s(\mathbb{R}^N)$
\begin{equation*}\label{integral}
\int_{\mathbb{R}^N} |\xi|^{2s}|\mathcal{F}(u)(\xi)|^2d\xi={\frac{a_{N,s}}{2}}\int_{\mathbb{R}^N}\int_{\mathbb{R}^N}\frac{|u(x)-u(y)|^2}{|x-y|^{N+2s}}dxdy,
\end{equation*}
where $a_{N,s}$ is the constant defined in \eqref{const}.
\end{Proposition}
Moreover we can extend by density the operator  $(-\Delta)^s u$ defined in \eqref{operator} from $\mathcal{S}(\mathbb{R}^N)$ to $H^s(\mathbb{R}^N)$. In this way, the associated scalar product  can be reformulated as follows
\begin{equation*}\begin{split}
\langle u,v\rangle_{ H^s(\mathbb{R}^N)}&:=\langle (-\Delta)^s u, v\rangle+(u,v)\\
&:=
P.V.\int_{\mathbb{R}^N}\int_{\mathbb{R}^N}{\frac{(u(x)-u(y))(v(x)-v(y))}{|x-y|^{N+2s}}\,dx\,dy}+\int_{\mathbb{R}^N}uv\,dx.
\end{split}\end{equation*}
We call $\|\cdot\|_{H_0^s (\mathbb{R}^N)}$ the induced norm by this scalar product.
Summarizing  the previous result we obtain the following useful formulation, that includes the corresponding \textit{integration  by parts} (see for instance \cite{dine}).
\begin{Proposition} Let $s\in (0,1)$ and $u\in H^{s}(\ren)$. Then,
$$
\frac{a_{N,s}}{2}\langle (-\Delta)^su,u\rangle= \frac{a_{N,s}}{2}
\|(-\Delta)^{\frac{s}{2}}u\|^{2}_{L^2(\ren)}=\||\xi|^s\mathcal{F}
u\|^{2}_{L^2(\ren)}.
$$
\end{Proposition}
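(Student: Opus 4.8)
The plan is to recognize all three expressions as one and the same quantity, namely a fixed multiple of the Gagliardo seminorm of $u$; once the definitions are unwound, the statement is an immediate consequence of Proposition \ref{norma}, and no new estimate is required. The structure I would follow is: (i) identify the first term with the Gagliardo double integral straight from the definition of the bilinear form $\langle(-\Delta)^s\cdot,\cdot\rangle$; (ii) apply Proposition \ref{norma} to match this with the Fourier-side expression; and (iii) insert the middle term using that $(-\Delta)^{s/2}$ is the Fourier multiplier with symbol $|\xi|^{s}$ and that $(-\Delta)^{s/2}\circ(-\Delta)^{s/2}=(-\Delta)^s$.

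For steps (i) and (ii): setting $v=u$ in the expression for $\langle(-\Delta)^s\cdot,\cdot\rangle$ recalled just above, $\langle(-\Delta)^su,u\rangle$ equals the principal value of $\int_{\ren}\int_{\ren}\frac{|u(x)-u(y)|^2}{|x-y|^{N+2s}}\,dx\,dy$. For $u\in H^s(\ren)$ this integral is finite --- this is contained in Proposition \ref{norma}, or equivalently in the $W^{s,2}(\ren)$ description of $H^s(\ren)$ --- and its integrand is nonnegative, so the principal value is superfluous. Multiplying by $a_{N,s}/2$ and quoting Proposition \ref{norma} then yields $\frac{a_{N,s}}{2}\langle(-\Delta)^su,u\rangle=\int_{\ren}|\xi|^{2s}|\mathcal{F}u(\xi)|^2\,d\xi=\||\xi|^s\mathcal{F}u\|_{L^2(\ren)}^2$, which is the equality of the first and the last term. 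For step (iii), I would use that $(-\Delta)^{s/2}$ is by definition the Fourier multiplier with symbol $|\xi|^{2\cdot(s/2)}=|\xi|^s$, extended from $\mathcal{S}(\ren)$ to $H^s(\ren)$ by density as recalled before Definition \ref{Hs}; hence $(-\Delta)^{s/2}\circ(-\Delta)^{s/2}=(-\Delta)^s$ and, by self-adjointness of $(-\Delta)^{s/2}$ on $L^2(\ren)$, $\langle(-\Delta)^su,u\rangle=\|(-\Delta)^{s/2}u\|_{L^2(\ren)}^2$, which after multiplication by $a_{N,s}/2$ slots the middle term into the chain.

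A cleaner way to carry out step (iii), which I would probably adopt in the write-up, is to verify the whole chain of identities first for $u\in\mathcal{S}(\ren)$ --- where the Fourier computation, the vanishing of the principal value and the semigroup identity are all classical --- and then pass to the limit along a sequence $u_n\to u$ in $H^s(\ren)$, using the density of $\mathcal{S}(\ren)$ in $H^s(\ren)$ and the continuity of each of the three functionals with respect to the $H^s$-seminorm. I do not expect a genuine obstacle here: the sole substantive ingredient, Proposition \ref{norma} (Plancherel together with the explicit value \eqref{const} of $a_{N,s}$), is already available, and the only points meriting a line of justification are the absolute convergence of the double integral, which is what permits dropping the principal value when $v=u$, and keeping track of the normalization of $(-\Delta)^{s/2}$ so that the constant $a_{N,s}/2$ appears in the correct place --- both checked on Schwartz functions by Plancherel.
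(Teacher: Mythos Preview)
Your proposal is correct and follows exactly the route the paper intends: the paper does not give a separate proof of this proposition but presents it as a direct summary of Proposition~\ref{norma} together with the definitions of $(-\Delta)^s$ and of the bilinear form $\langle(-\Delta)^s\cdot,\cdot\rangle$, referring to \cite{dine} for details. Your steps (i)--(iii) are precisely that unwinding, and your remark about verifying everything first on $\mathcal{S}(\ren)$ and passing to the limit by density is the standard way to make the argument rigorous.
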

The dual space of $H^{s}(\mathbb{R}^N)$ is defined by
$$H^{-s}(\mathbb{R}^N)=\{f\in \mathcal{S}'(\mathbb{R}^N)\,/\, |\xi|^{-s}\mathcal{F}(f)\in L^2(\mathbb{R}^N)\}.$$
The following properties are immediate:
\begin{enumerate}
\item $(-\Delta)^s :H^{s}(\mathbb{R}^N)\rightarrow H^{-s}(\mathbb{R}^N)$ is a continuous operator.
\item $(-\Delta)^s $ is a symmetric operator in $H^{s}(\mathbb{R}^N)$, that is,
$$\langle (-\Delta)^s u, v\rangle=\langle u, (-\Delta)^s v\rangle,\quad u,v\in H^{s}(\mathbb{R}^N).$$
\item  {Denoting also} by $\langle  \cdot  , \cdot  \rangle$ the natural duality product between
$H^{s}(\mathbb{R}^N)$ and $H^{-s}(\mathbb{R}^N)$, then
$$|\langle (-\Delta)^s u, v\rangle| \leq \| u
\|_{H^{s}(\mathbb{R}^N)} \|v\|_{H^{s}(\mathbb{R}^N) } .$$
\end{enumerate}
We define now the space $H_0^s (\Omega)$ as
the completion of $\mathcal{C}^\infty_0(\Omega)$ with respect to
the norm of $H^s(\R^N)$.
Notice that if  $u\in
H^s_0(\Omega)$, we have $u=0 \hbox{ a.e. in } \R^N\setminus
\Omega$ and we can write
$$
\int_{\ren}\int_{\ren}
\frac{|u(x)-u(y)|^2}{|x-y|^{N+2s}}\,dx\,dy=\iint_{D_\Omega}
{\frac{|u(x)-u(y)|^2}{|x-y|^{N+2s}}\,dx\,dy}
$$
where
$$
{D_\Omega} := \ren \times \ren \setminus \big( \mathcal{C} \Omega \times  \mathcal{C} \Omega \big) \,.
$$
\begin{remark} If $\Omega$ is a bounded domain and $u\in \mathcal{C}^\infty_0(\Omega)$, then by setting
$$|||u|||_{H_0^s (\Omega)}:=\left(\int_{\mathbb{R}^N} \int_{\mathbb{R}^N}{\frac{|u(x)-u(y)|^2}{|x-y|^{N+2s}}\,dx\,dy}\right)^{1/2}\,,
$$
and using a Poincaré type inequality  (see \cite{AMPP} or \cite{dine}), we can prove that
$|||\cdot|||_{H_0^s (\Omega)}$ and $\|\cdot\|_{H_0^s (\Omega)}$ are equivalent norms.
\end{remark}
If we denote by $H^{-s}(\Omega):=[H_0^s (\Omega)]^*$ the dual space
of $H_0^s (\Omega)$, then
$$(-\Delta)^s :H_0^s (\Omega)\rightarrow H^{-s}(\Omega),$$
is a continuous operator.

We give the meaning  of solutions that will be used along the paper: $i)$ {\it energy solutions} when the variational framework can be used
and $ii)$ {\it weak solutions} for data that are integrable but not in the dual space.

Recall that we assume $0\in \Omega$.
\begin{Definition}\label{energia}
Assume $0<\lambda < \Lambda_{N,s}$. For $f \in  H^{-s}
(\Omega) $ we say that $u\in  H_{0}^{s}(\Omega)$ is a {\it finite
energy solution} to \eqref{tlem} if
$$
\frac{a_{N,s}}{2}\langle (-\Delta)^s u, w\rangle-\lambda\,\dint_{\Omega} \dfrac{u w}{|x|^{2s}}\,dx
=\langle f,w\rangle ,  \qquad
 \forall w\in H_0^s (\Omega)\,.$$
\end{Definition}
If $\lambda<\Lambda_{N,s}$, then existence and uniqueness of a
solution $u\in H_{0}^{s}(\Omega)$ for all $f\in
H^{-s}(\Omega)$ easily follows.
\begin{remark}
If $\lambda=\Lambda_{N,s}$, the same result holds but in a
space $H(\Omega)$ defined as the completion of
$\mathcal{C}^\infty_0(\Omega)$ with respect to the norm
$$
\|\phi\|^2_{H(\Omega)}:=\frac{a_{N,s}}{2}\iint_{D_\Omega}{\frac{|\phi(x)-\phi(y)|^2}{|x-y|^{N+2s}}}\,
dx\, dy-\Lambda_{N,s}\int_{\Omega}{\frac{\phi^2}{|x|^{2s}}\,dx}.
$$
By using the improved Hardy inequality (see for instance \cite{APP}) we get that $H(\Omega)$ is a Hilbert space and $H^s_0(\Omega)\varsubsetneq H(\Omega)\subsetneqq
W^{s,q}_0(\Omega)$, for all $q<2$.
\end{remark}

To deal with the case of a general  $f\in L^1(\Omega)$, we need to define the notion of {\it weak solution}, where we only request the regularity needed to give weak sense to the
equation.

Since the operator is nonlocal, we need to precise the class of test function to be considered, that precisely is,
\begin{equation}\label{test}\begin{split}
\mathcal{T}:=\{\phi:\mathbb{R}^N\rightarrow\mathbb{R}\,|\, (-\Delta)^s \phi=\varphi,\, \varphi\in L^\infty(\Omega)\cap \mathcal{C}^\alpha(\Omega), 0<\alpha<1, \phi=0\inn \ren\setminus \Omega\}.
\end{split}\end{equation}
Notice that every $\phi\in \mathcal{T}$ belongs in particular to $L^\infty(\Omega)$ (see \cite{LPPS}) and moreover it is a strong solution to the equation $(-\Delta)^s \phi=\varphi$. See for instance \cite{SEV} and \cite{Silvestre}.

\begin{Definition}\label{veryweak}
Assume  $f\in L^{1}(\Omega)$. We  say that $u\in {L}^{1}(\O)$ is a weak supersolution (subsolution) of problem \eqref{tlem} if $\dfrac{u}{|x|^{2s}}\in L^1(\Omega)$, $u=0\inn \ren\setminus \Omega$, and for all nonnegative $\phi\in
\mathcal{T}$,  the following inequality holds,
\begin{equation*}\label{eq:subsuper}
\dint_{\Omega} u (-\Delta)^s \phi\,dx -\lambda\dint_{\Omega} \dfrac{u\phi}{|x|^{2s}}\,dx\ge (\le)\\
\dint_{\Omega}\,f\phi\,dx.
\end{equation*}
If $u$ is super and subsolution, then we say that $u$
is a weak solution.
\end{Definition}
Notice that, if $u\in \mathcal{C}_{0}^{\infty} (\mathbb{R}^{N})$,  Frank, Lieb and Seiringer proved in
\cite{FLS} the following result.
\begin{Lemma}\label{GS}(Ground State Representation) Let $0<\gamma<
\frac{N-2s}{2}$. If $u\in \mathcal{C}_{0}^{\infty} (\mathbb{R}^{N})$ and
$v(x):= |x|^{\gamma} u(x)$, then
$$
\dint_{\mathbb{R}^{N}} \, |\xi|^{2s} |\hat{u}(\xi)|^{2}\,d\xi
-(\Lambda_{N,s}+\Phi_{N,s}(\gamma))
\dint_{\mathbb{R}^{N}}|x|^{-2s}|u(x)|^2\,dx=a_{N,s}\iint_{\mathbb{R}^{2N}}\,
\frac{|v(x)-v(y)|^2}{|x-y|^{N+2s}}\frac{dx}{|x|^{\gamma}}
\frac{dy}{|y|^{\gamma}},
$$
where $$\Phi_{N,s}(\gamma)=2^{2
s}\left(\frac{\Gamma\big(\frac{\gamma+2s}{2}\big)\Gamma\big(\frac{N-\gamma}{2}\big)}{\Gamma\big(\frac{N-\gamma-2s}{2}\big)\Gamma\big(\frac{\gamma}{2}\big)}-
\frac{\Gamma^2\big(\frac{N+2s}{4}\big)}{\Gamma^2\big(\frac{N-2s}{4}\big)}\right).$$
\end{Lemma}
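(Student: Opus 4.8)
The plan is to deduce the identity from a pointwise algebraic decomposition of the Gagliardo kernel, combined with the classical formula for the fractional Laplacian of a Riesz kernel. By Proposition~\ref{norma} the left-hand side equals $\tfrac{a_{N,s}}{2}\iint_{\R^{2N}}|u(x)-u(y)|^2|x-y|^{-N-2s}\,dx\,dy$, understood as a principal value. Writing $u(x)=|x|^{-\g}v(x)$ and expanding $|u(x)-u(y)|^2$, the cross term $-2|x|^{-\g}|y|^{-\g}v(x)v(y)$ is exactly the cross term of $|v(x)-v(y)|^2|x|^{-\g}|y|^{-\g}$; subtracting it out one is left with the elementary identity
\[
\frac{|u(x)-u(y)|^2}{|x-y|^{N+2s}}-\frac{|v(x)-v(y)|^2}{|x-y|^{N+2s}\,|x|^{\g}|y|^{\g}}=\frac{\big(|x|^{-\g}-|y|^{-\g}\big)\big(|x|^{-\g}v(x)^2-|y|^{-\g}v(y)^2\big)}{|x-y|^{N+2s}}.
\]

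I would then integrate this identity over $\R^{2N}$. Symmetrising in $x\leftrightarrow y$, the right-hand side collapses to $2\dint_{\ren}|x|^{-\g}v(x)^2\,\Big(\mathrm{P.V.}\dint_{\ren}\dfrac{|x|^{-\g}-|y|^{-\g}}{|x-y|^{N+2s}}\,dy\Big)\,dx$. The inner integral is $a_{N,s}^{-1}(-\Delta)^s(|\cdot|^{-\g})(x)=a_{N,s}^{-1}\Lambda(\g)\,|x|^{-\g-2s}$, with $\Lambda(\g)=2^{2s}\Gamma\big(\tfrac{\g+2s}{2}\big)\Gamma\big(\tfrac{N-\g}{2}\big)\big/\big(\Gamma\big(\tfrac{N-\g-2s}{2}\big)\Gamma\big(\tfrac{\g}{2}\big)\big)$; this value is obtained from the Fourier transform of homogeneous tempered distributions (equivalently, a Beta-integral identity), cf. \cite{FLS}. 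Substituting and using $|x|^{-\g}v(x)^2|x|^{-\g-2s}=|x|^{-2s}u(x)^2$, the whole ``defect'' integral equals $2a_{N,s}^{-1}\Lambda(\g)\dint_{\ren}|x|^{-2s}u(x)^2\,dx$. Multiplying through by $a_{N,s}/2$ and transposing the singular term yields the claimed identity, provided one checks that $\Lambda(\g)=\Lambda_{N,s}+\Phi_{N,s}(\g)$; but this is immediate by comparing with \eqref{cteHardy} and with the definition of $\Phi_{N,s}$, and in the limit $\g\to\tfrac{N-2s}{2}$ (where $\Phi_{N,s}$ vanishes) one recovers precisely the Hardy inequality \eqref{Hardy}.

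The only real difficulty is to make these formal manipulations rigorous, since the individual pieces $\iint|x|^{-2\g}v(x)^2|x-y|^{-N-2s}$ diverge near the diagonal: the cancellation of the cross terms, the use of Fubini and the $x\leftrightarrow y$ symmetrisation must be carried out on the truncated kernel $|x-y|^{-N-2s}\chi_{\{|x-y|>\delta\}}$, and the limit $\delta\to0^{+}$ taken afterwards. For this one needs (i) that $\iint_{\R^{2N}}|v(x)-v(y)|^2|x-y|^{-N-2s}|x|^{-\g}|y|^{-\g}\,dx\,dy<\iy$, which holds precisely because $0<\g<\tfrac{N-2s}{2}$ (away from the origin $v$ is smooth, as $u\in\mathcal C_0^\infty$, while near the origin $v(x)=O(|x|^{\g})$ and the restriction on $\g$ is exactly what makes the weighted seminorm finite); and (ii) a dominated-convergence argument for the defect term, for which the integrability of $|x|^{-2s}u^2$ near the origin (guaranteed since $2s<N$) is the relevant input. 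Once the three integrals involved are shown to be finite and the truncated identity is established, letting $\delta\to0$ closes the argument.
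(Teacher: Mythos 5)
The paper gives no proof of Lemma \ref{GS}; it simply cites Frank, Lieb and Seiringer \cite{FLS}, and the argument you propose is precisely theirs: the pointwise decomposition of the Gagliardo integrand under the substitution $u=|x|^{-\g}v$, symmetrisation in $x\leftrightarrow y$, and identification of the defect term with a multiple of $\int u^2\,\omega^{-1}(-\Delta)^s\omega\,dx$ for $\omega=|\cdot|^{-\g}$, whose fractional Laplacian is then computed from the Fourier/Beta formula for Riesz kernels. The elementary algebraic identity you start from is correct, the symmetrisation is legitimate, the identification $\Lambda(\g)=\Lambda_{N,s}+\Phi_{N,s}(\g)$ by comparison with \eqref{cteHardy} is right, and the remarks in your last paragraph about truncating the kernel, establishing finiteness of each piece, and passing to the limit $\delta\to0^+$ are exactly the points one needs to make the formal manipulations rigorous.

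There is one thing you should flag rather than paper over. Your derivation, applied consistently with the paper's own Proposition \ref{norma} (i.e.\ $\int|\xi|^{2s}|\hat u|^2\,d\xi=\tfrac{a_{N,s}}{2}\iint|u(x)-u(y)|^2|x-y|^{-N-2s}\,dxdy$), produces
\[
\int_{\R^N}|\xi|^{2s}|\hat u(\xi)|^2\,d\xi-\big(\Lambda_{N,s}+\Phi_{N,s}(\g)\big)\int_{\R^N}|x|^{-2s}|u(x)|^2\,dx
=\frac{a_{N,s}}{2}\iint_{\R^{2N}}\frac{|v(x)-v(y)|^2}{|x-y|^{N+2s}}\frac{dx}{|x|^{\g}}\frac{dy}{|y|^{\g}},
\]
with the constant $\tfrac{a_{N,s}}{2}$ and not $a_{N,s}$ as printed in the Lemma. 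The consistency check at $\g\to0$ (where $v\to u$, $\Phi_{N,s}(\g)\to-\Lambda_{N,s}$, and Proposition \ref{norma} fixes the constant) confirms that $\tfrac{a_{N,s}}{2}$ is the correct normalisation within the conventions of this paper; the factor $a_{N,s}$ in the statement is a normalisation slip carried over from \cite{FLS}, who use a singular-integral constant differing from this paper's $a_{N,s}$ by a factor of $2$. You write that your computation ``yields the claimed identity''; it does so only up to this factor of two, and you should say so explicitly.
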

Notice that in particular this representation  proves that the constant $\Lambda_{N,s}$ is optimal and is not attained. See \cite[Remark 4.2]{FLS}  for details.

Using this representation with $\lambda=\Lambda_{N,s}+\Phi_{N,s}(\gamma)$, we obtain that
if $u$ solves
$$
\left\{\begin{array}{rcll}
(-\Delta)^s u-\lambda \dfrac{u}{|x|^{2s}}&=&f(x,u) &\hbox{  in } \Omega,\\
u&=&0 &\hbox{  in } \mathbb{R}^N\setminus\Omega,
\end{array}\right.
$$
then $v$ satisfies
\begin{equation}\label{equi}
\left\{\begin{array}{rcll}
L_\gamma v&=&|x|^{-\gamma}f(x, |x|^{-\gamma} v)=: g(x,v)&\inn\Omega,\\
v&=&0&\inn \mathbb{R}^{N}\setminus\Omega,
\end{array}\right.
\end{equation}
with
\begin{equation}\label{Ltilde0}
L_\gamma v:=a_{N,s} \mbox{ P.V. }\dint_{\mathbb{R}^{N}}
\dfrac{v(x)-v(y)}{|x-y|^{N+2s}}\,\dfrac{dy}{|x|^{\gamma}|y|^{\gamma}}.
\end{equation}
Observe that if $\gamma\rightarrow 0$, then $\Phi_{N,s}(\gamma)\rightarrow -\Lambda_{N,s}$ and $\lambda\rightarrow 0$. On the other hand, if $\gamma\rightarrow \dfrac{N-2s}{2}$, then $\Phi_{N,s}(\gamma)\rightarrow 0$ and $\lambda\rightarrow \Lambda_{N,s}$.

To analyze the behavior and the regularity of $u$, we  deal with the same questions for $v$. Thus, we need
to work in fractional Sobolev spaces with admissible weights. For
simplicity of typing, we denote
$$d\mu:=
\dfrac{dx}{|x|^{2\gamma}} \quad \hbox{ and } \quad d\nu:=
\dfrac{dxdy}{|x-y|^{N+2s}|x|^\gamma|y|^\gamma}.$$

\noindent   For $\O\subseteq \ren$, we define the weighted fractional Sobolev space
$Y^{s,\gamma}(\Omega)$ as follows
$$
Y^{s,\gamma}(\Omega)\dyle := \Big\{ \phi\in
L^2(\Omega,d\mu)\mbox{ s.t. }\dint_{\Omega}\dint_{\Omega}(\phi(x)-\phi(y))^2d\nu<+\infty\Big\}.
$$
It is clear that $Y^{s,\gamma}(\O)$ is a Hilbert space endowed with
the norm
$$
\|\phi\|_{Y^{s,\gamma}(\Omega)}:=
\Big(\dint_{\Omega}|\phi(x)|^2d\mu
+\dint_{\Omega}\dint_{\Omega}(\phi(x)-\phi(y))^2d\nu\Big)^{\frac
12}.
$$

The following  extension lemma can be proved by using the
same arguments of \cite{Adams} (see also \cite{dine}).
\begin{Lemma}\label{ext}
Let $\Omega\subset \ren$ be a smooth domain. Then for all $w\in
Y^{s,\g}(\Omega)$, there exists $\tilde{w}\in Y^{s,\g}(\ren)$ such
that $\tilde{w}_{|\Omega}=w$ and
$$
\|\tilde{w}\|_{Y^{s,\g}(\ren)}\le C \|{w}\|_{Y^{s,\g}(\Omega)},
$$
where $C:= C(N,s,\O, \gamma)>0$.
\end{Lemma}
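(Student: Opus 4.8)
The plan is to deduce the statement from the classical extension theorem for the \emph{unweighted} fractional Sobolev space $W^{s,2}$, exploiting that, since $0\in\Omega$ and $\Omega$ is bounded, the weights defining $d\mu$ and $d\nu$ are comparable to $1$ away from the origin --- and, in particular, on all of $\ren\setminus\Omega$.

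First I would observe that $Y^{s,\gamma}(\Omega)$ embeds continuously into $W^{s,2}(\Omega)$: since $R:=\sup_{x\in\Omega}|x|<\infty$, for $x,y\in\Omega$ one has $|x|^{\gamma}|y|^{\gamma}\le R^{2\gamma}$, hence $d\mu\ge R^{-2\gamma}\,dx$ and $d\nu\ge R^{-2\gamma}|x-y|^{-N-2s}\,dx\,dy$ on $\Omega$, so that $\|w\|_{W^{s,2}(\Omega)}\le R^{\gamma}\|w\|_{Y^{s,\gamma}(\Omega)}$. Then I would invoke the classical extension theorem for fractional Sobolev spaces on bounded Lipschitz (a fortiori smooth) domains --- the reflection-and-localization construction of \cite{Adams}, see also \cite{dine} --- to obtain a bounded linear operator $E\colon W^{s,2}(\Omega)\to W^{s,2}(\ren)$ with $Ew=w$ a.e.\ in $\Omega$; setting $\tilde w:=Ew$, the previous step gives $\|\tilde w\|_{W^{s,2}(\ren)}\le C_1(N,s,\Omega,\gamma)\,\|w\|_{Y^{s,\gamma}(\Omega)}$.

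The remaining task is to check that $\tilde w\in Y^{s,\gamma}(\ren)$ with $\|\tilde w\|_{Y^{s,\gamma}(\ren)}\le C\|w\|_{Y^{s,\gamma}(\Omega)}$. Writing $d_0:=\operatorname{dist}(0,\partial\Omega)>0$, one has $|x|\ge d_0$ for $x\in\ren\setminus\Omega$; hence the zero-order term splits as $\int_{\ren}\tilde w^2\,d\mu=\int_{\Omega}w^2\,d\mu+\int_{\ren\setminus\Omega}\tilde w^2|x|^{-2\gamma}\,dx\le\|w\|_{L^2(\Omega,d\mu)}^2+d_0^{-2\gamma}\|\tilde w\|_{L^2(\ren)}^2$. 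For the Gagliardo double integral I would decompose $\ren\times\ren$ into the four blocks determined by whether each variable lies in $\Omega$ or in $\ren\setminus\Omega$: over $\Omega\times\Omega$ the integral is exactly the corresponding piece of $\|w\|_{Y^{s,\gamma}(\Omega)}^2$; over $(\ren\setminus\Omega)^2$ one bounds $|x|^{-\gamma}|y|^{-\gamma}\le d_0^{-2\gamma}$ and is left with a multiple of $\|\tilde w\|_{W^{s,2}(\ren)}^2$; and on each mixed block, using $|y|^{-\gamma}\le d_0^{-\gamma}$ and splitting $\Omega$ into a small ball $B_\rho\Subset\Omega$ centered at $0$ (with $\rho\le\min\{1,d_0/2\}$) and its complement in $\Omega$, the part away from the origin is again controlled by $\|\tilde w\|_{W^{s,2}(\ren)}^2$, while on $B_\rho$ one uses $|x|^{-\gamma}\le|x|^{-2\gamma}$, the elementary bound $(\tilde w(x)-\tilde w(y))^2\le 2w(x)^2+2\tilde w(y)^2$, and the positive distance between $B_\rho$ and $\ren\setminus\Omega$ to absorb the two resulting terms into multiples of $\|w\|_{L^2(\Omega,d\mu)}^2$ and $\|\tilde w\|_{L^2(\ren)}^2$ (the integrability of $|x|^{-\gamma}$ near $0$ being guaranteed by $\gamma<\tfrac{N-2s}{2}<N$). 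Summing these estimates together with the bound on $\|\tilde w\|_{W^{s,2}(\ren)}$ yields the claim.

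The hard part is this last step, and within it the mixed blocks, where one must reconcile the behavior of the extension $\tilde w$ outside $\Omega$ with the singularity of the weight at the origin; this is exactly where the standing hypothesis $0\in\Omega$ enters essentially, since it forces $\operatorname{dist}(0,\ren\setminus\Omega)>0$ and thereby decouples the weight singularity from the region where $\tilde w$ differs from $w$. (Alternatively, one can bypass the global embedding $Y^{s,\gamma}(\Omega)\hookrightarrow W^{s,2}(\Omega)$ and run the localization-reflection scheme of \cite{Adams} directly: the patch around $0$ is extended by zero, all boundary patches lie at positive distance from $0$ so the weights are harmless there and the usual reflection applies, and the estimates are of the same flavour.)
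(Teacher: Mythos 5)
Your proof is correct, and it takes a genuinely different route from the one the paper has in mind. The paper does not give a detailed argument; it simply says that Lemma~\ref{ext} "can be proved by using the same arguments of \cite{Adams} (see also \cite{dine})," i.e.\ by re-running the standard localization-and-reflection construction for extension operators directly in the weighted space $Y^{s,\gamma}$, checking that each local reflection and each partition-of-unity step is bounded with respect to the weighted norms. Your argument instead treats the unweighted extension operator $E\colon W^{s,2}(\Omega)\to W^{s,2}(\ren)$ as a black box and reduces the weighted statement to it through two observations that exploit the standing hypothesis $0\in\Omega$: on the bounded set $\Omega$ the weight $|x|^{-\gamma}$ is bounded \emph{below}, so $Y^{s,\gamma}(\Omega)\hookrightarrow W^{s,2}(\Omega)$ continuously; and on $\ren\setminus\Omega$ the weight is bounded \emph{above} by $d_0^{-\gamma}$, which together with the positive gap $\operatorname{dist}(B_\rho,\ren\setminus\Omega)\geq d_0/2$ (so that $\int_{\ren\setminus\Omega}|x-y|^{-N-2s}\,dy$ is uniformly bounded for $x\in B_\rho$, and $\int_{B_\rho}|x|^{-\gamma}\,dx<\infty$ since $\gamma<N$) lets you close the estimate on the mixed blocks. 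This is cleaner and more modular than the route the paper gestures at: one does not have to track the weights through the reflection maps or verify that the cut-offs interact well with the singularity at the origin. What the direct re-derivation buys in exchange is robustness — it would also cover situations where the weight degenerates or blows up near $\partial\Omega$ rather than at an interior point, which is precisely the case your reduction cannot handle, since it uses in an essential way that the weight is trivial near the boundary of $\Omega$. For the setting of this paper ($\Omega$ bounded, $0\in\Omega$, $0<\gamma<N$) your argument is perfectly adequate, and you correctly flag the two places where these hypotheses enter.

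One small point worth making explicit in a written-up version: when you say the positive distance lets you "absorb the two resulting terms," you should spell out that you are bounding $\int_{\ren\setminus\Omega}|x-y|^{-N-2s}\,dy\leq \int_{|z|\geq d_0/2}|z|^{-N-2s}\,dz<\infty$ uniformly in $x\in B_\rho$ for the $w(x)^2$ piece, and bounding the kernel by $(d_0/2)^{-N-2s}$ together with $\int_{B_\rho}|x|^{-\gamma}\,dx<\infty$ for the $\tilde w(y)^2$ piece; simply replacing $|x-y|^{-N-2s}$ by a constant and then integrating in $y$ over the unbounded set $\ren\setminus\Omega$ would of course diverge.
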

We define the space $Y^{s,\gamma}_0(\Omega)$ as the
completion of $\mathcal{C}^\infty_0(\Omega)$ with respect to the
norm of $Y^{s,\g}(\Omega)$. If  $\Omega$ is a bounded regular domain, then as in the case
$\gamma=0$, we have the next Poincar\'e inequality (a proof
can be found in the Appendix B of \cite{AMPP}).
\begin{Theorem}\label{poincare}
There exists a positive constant $C:=C(\O,N,s  {,\gamma})$ such
that for all $\phi\in \mathcal{C}^\infty_0(\O)$, we have
$$ C\dint_{\O}\phi^2(x)d\mu\le \int_{\O}
\int_{\O}(\phi(x)-\phi(y))^2d\nu.$$
\end{Theorem}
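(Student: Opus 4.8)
The plan is to derive this weighted fractional Poincar\'e inequality from the classical fractional Hardy inequality \eqref{Hardy}, using the ground state representation of Lemma \ref{GS} to pass from the weighted bilinear form to the unweighted one. Given $\phi\in\mathcal{C}^\infty_0(\Omega)$, I would set $u(x):=|x|^{-\gamma}\phi(x)$, which is exactly the change of unknown $\phi=|x|^{\gamma}u$ of Lemma \ref{GS}. Since $0<\gamma<\frac{N-2s}{2}$ forces $\gamma+s<\frac{N}{2}$, a look at $u$ near the origin (where $u(x)\sim\phi(0)|x|^{-\gamma}$) shows that $u\in H^{s}(\R^N)$ with compact support; hence, after a standard density argument, both Lemma \ref{GS} and \eqref{Hardy} apply to $u$. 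Writing out Lemma \ref{GS} with $v=\phi$, rewriting the term $\int|\xi|^{2s}|\widehat{u}|^2\,d\xi$ through Proposition \ref{norma}, and recalling $d\nu=\frac{dx\,dy}{|x-y|^{N+2s}|x|^{\gamma}|y|^{\gamma}}$, one gets
$$a_{N,s}\iint_{\R^N\times\R^N}\big(\phi(x)-\phi(y)\big)^2\,d\nu=\frac{a_{N,s}}{2}\iint_{\R^N\times\R^N}\frac{\big(u(x)-u(y)\big)^2}{|x-y|^{N+2s}}\,dx\,dy-\big(\Lambda_{N,s}+\Phi_{N,s}(\gamma)\big)\int_{\R^N}\frac{u^2}{|x|^{2s}}\,dx.$$

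Next I would use \eqref{Hardy} for $u$, namely $\frac{a_{N,s}}{2}\iint\frac{(u(x)-u(y))^2}{|x-y|^{N+2s}}\,dx\,dy\ge\Lambda_{N,s}\int\frac{u^2}{|x|^{2s}}\,dx$, together with $\int_{\R^N}\frac{u^2}{|x|^{2s}}=\int_{\Omega}\frac{\phi^2}{|x|^{2s+2\gamma}}$, to bound the right-hand side of the identity below by $-\Phi_{N,s}(\gamma)\int_{\Omega}\frac{\phi^2}{|x|^{2s+2\gamma}}\,dx$. The coefficient is positive: $-\Phi_{N,s}(\gamma)=\Lambda_{N,s}-\big(\Lambda_{N,s}+\Phi_{N,s}(\gamma)\big)=\Lambda_{N,s}-\lambda>0$, by the relation $\lambda=\Lambda_{N,s}+\Phi_{N,s}(\gamma)$ linking $\gamma$ and the spectral parameter and the standing assumption $\lambda<\Lambda_{N,s}$ (equivalently, $\Phi_{N,s}$ increases on $(0,\frac{N-2s}{2})$ from $\Phi_{N,s}(0)=-\Lambda_{N,s}$ to $\Phi_{N,s}(\frac{N-2s}{2})=0$). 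Finally, since $\Omega\subset B_R(0)$ for some $R>0$, on $\Omega$ we have $|x|^{-2s-2\gamma}\ge R^{-2s}|x|^{-2\gamma}$, hence $\int_{\Omega}\frac{\phi^2}{|x|^{2s+2\gamma}}\ge R^{-2s}\int_{\Omega}\phi^2\,d\mu$. Collecting everything gives the asserted inequality with $C=\dfrac{\Lambda_{N,s}-\lambda}{a_{N,s}\,R^{2s}}$.

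In this scheme the two density extensions (of Lemma \ref{GS} and of \eqref{Hardy} to $u=|x|^{-\gamma}\phi\in H^{s}(\R^N)$, which is why the \emph{strict} inequality $\gamma+s<N/2$ matters) and the sign of $\Phi_{N,s}(\gamma)$ are routine. The hard part will be the shape of the seminorm: the computation above produces the double integral over $\R^N\times\R^N$, which for $\phi\in\mathcal{C}^\infty_0(\Omega)$ equals the integral over $D_\Omega$; to obtain the stated form with the double integral restricted to $\Omega\times\Omega$, one must additionally absorb the exterior interaction $2\int_{\Omega}\int_{\R^N\setminus\Omega}\frac{\phi(x)^2}{|x-y|^{N+2s}|x|^{\gamma}|y|^{\gamma}}\,dy\,dx$ — which, because $0\in\Omega$, is comparable to $\int_{\Omega}\frac{\phi(x)^2}{|x|^{\gamma}\,\operatorname{dist}(x,\partial\Omega)^{2s}}\,dx$ — into the $\Omega\times\Omega$ integral. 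This amounts to a weighted fractional Hardy inequality up to $\partial\Omega$, and it is the delicate ingredient (the one carried out in Appendix B of \cite{AMPP}); it is where the Lipschitz regularity of $\partial\Omega$ enters, and where the estimate becomes sensitive to $s$ near $1/2$.
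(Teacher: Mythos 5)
Your ground-state-plus-Hardy argument is clean and does give, with the explicit constant $C=\frac{\Lambda_{N,s}-\lambda}{a_{N,s}R^{2s}}$, the inequality
$$
C\int_{\Omega}\phi^2\,d\mu\;\le\;\iint_{\R^N\times\R^N}(\phi(x)-\phi(y))^2\,d\nu
\;=\;\iint_{D_\Omega}(\phi(x)-\phi(y))^2\,d\nu ,
$$
but this is strictly weaker than the assertion of Theorem \ref{poincare}, whose right-hand side is the integral over $\Omega\times\Omega$ only. You flag the discrepancy yourself in the last paragraph, and that is exactly where the gap lies: passing from $D_\Omega$ to $\Omega\times\Omega$ is not a routine remainder but the substantive part of the statement. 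Concretely, the difference
$$
\iint_{D_\Omega\setminus(\Omega\times\Omega)}(\phi(x)-\phi(y))^2\,d\nu
=2\int_{\Omega}\frac{\phi(x)^2}{|x|^{\gamma}}\Big(\int_{\R^N\setminus\Omega}\frac{dy}{|x-y|^{N+2s}|y|^{\gamma}}\Big)dx
$$
is comparable to $\int_\Omega \phi(x)^2\,\mathrm{dist}(x,\partial\Omega)^{-2s}\,d\mu$, and absorbing it into the $\Omega\times\Omega$ energy amounts to a weighted fractional Hardy inequality at $\partial\Omega$. That inequality is precisely what Appendix~B of \cite{AMPP} establishes and what the Theorem is quoting; it genuinely uses the geometry of $\partial\Omega$ (Lipschitz regularity), and one cannot get it from the interior Hardy inequality \eqref{Hardy} or from Lemma \ref{GS}, both of which see only the singularity at the origin. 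So your proposal proves the "easy half" and correctly diagnoses, but does not supply, the missing boundary Hardy estimate; as written it does not yield the theorem.

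Two smaller remarks. First, since the paper itself only cites \cite{AMPP} rather than giving a proof, your explicit derivation of the $D_\Omega$ version is still useful; but it should be presented as such, not as a proof of the stated Theorem. Second, your constant degenerates as $\gamma\uparrow\frac{N-2s}{2}$ (i.e.\ $\lambda\uparrow\Lambda_{N,s}$), since $-\Phi_{N,s}(\gamma)\to 0$; this is consistent with the paper restricting to $0<\gamma<\frac{N-2s}{2}$ in the context where Theorem \ref{poincare} is used, but it is worth making the restriction explicit in your argument.
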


As a consequence we reach that if $\Omega$ is a bounded domain, we
can consider $Y^{s,\gamma}_0(\Omega)$ with the equivalent norm
$$
|||\phi|||_{Y^{s,\gamma}_0(\Omega)}:=
\Big(\dint_{\Omega}\dint_{\Omega}(\phi(x)-\phi(y)|^2d\nu\Big)^{\frac
12}.
$$
In \cite{AB}, the authors prove the following weighted
Sobolev inequality.
\begin{Proposition} \label{Sobolev}
Consider $0<s<1$ such that $N>2s$ and $0<\gamma<\dfrac{N-2s}{2}$.
Then, for all $v\in \mathcal{C}_{0}^{\infty}(\ren)$, there exists
a positive constant $S=S(N,s,\g)$ such that
$$
\frac{a_{N,s}}{2}\dint_{\mathbb{R}^{N}}\dint_{\mathbb{R}^{N}}
\dfrac{|v(x)-v(y)|^{2}}{|x-y|^{N+2s}}\frac{dx}{|x|^{\gamma}}\,
\frac{dy}{|y|^{\gamma}}\geq S \left(\dint_{\mathbb{R}^{N}}
\dfrac{|v(x)|^{2_{s}^{*}}}{|x|^{2_{s}^{*}\gamma}}\right)^{\frac{2}{2^{*}_{s}}},
$$
where $2^{*}_{s}:= \dfrac{2N}{N-2s}$.

If $\Omega \subset \ren$ is a bounded domain and
$\gamma=\dfrac{N-2s}{2}$, then for all $q<2$, there exists a
positive constant $C=C(\Omega, \gamma, s,q)$ such that
$$
\frac{a_{N,s}}{2}\dint_{\mathbb{R}^{N}}\dint_{\mathbb{R}^{N}}
\dfrac{|v(x)-v(y)|^{2}}{|x-y|^{N+2s}}\frac{dx}{|x|^{\gamma}}\,
\frac{dy}{|y|^{\gamma}}\geq C\left(\dint_{\mathbb{R}^{N}}
\dfrac{|v(x)|^{2_{s,q}^{*}}}{|x|^{2_{s,q}^{*}\gamma}}\right)^{\frac{2}{2_{s,q}^{*}}},
$$
for all $v\in \mathcal{C}_{0}^{\infty}(\Omega)$, where
$2^{*}_{s,q}:= \dfrac{2N}{N-qs}$.
\end{Proposition}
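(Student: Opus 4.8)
\begin{pf}
The plan is to recognise the left-hand side, after the \emph{ground state transformation} of Lemma~\ref{GS}, as a fractional Hardy quadratic form carrying a strictly positive spectral gap, and then to feed it into the classical (unweighted) fractional Hardy and Sobolev inequalities; a cut-off argument near the origin finishes the proof.

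Fix $v\in\mathcal{C}^{\infty}_{0}(\ren)$, set $u:=|x|^{-\gamma}v$ (so $v(x)=|x|^{\gamma}u(x)$), and put $\lambda:=\Lambda_{N,s}+\Phi_{N,s}(\gamma)$. Then $0<\lambda<\Lambda_{N,s}$ whenever $0<\gamma<\frac{N-2s}{2}$: indeed $\lambda=2^{2s}\,\dfrac{\Gamma\big(\frac{\gamma+2s}{2}\big)\Gamma\big(\frac{N-\gamma}{2}\big)}{\Gamma\big(\frac{N-\gamma-2s}{2}\big)\Gamma\big(\frac{\gamma}{2}\big)}$ increases from $0$ to $\Lambda_{N,s}$ on that range (cf.\ the limits recorded after \eqref{Ltilde0}). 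Suppose first that $v\in\mathcal{C}^{\infty}_{0}(\ren\setminus\{0\})$, so that $u\in\mathcal{C}^{\infty}_{0}(\ren)$ and Lemma~\ref{GS} applies verbatim; it gives
\[
a_{N,s}\dint_{\ren}\dint_{\ren}\frac{|v(x)-v(y)|^{2}}{|x-y|^{N+2s}}\,\frac{dx}{|x|^{\gamma}}\,\frac{dy}{|y|^{\gamma}}
=\dint_{\ren}|\xi|^{2s}|\hat{u}(\xi)|^{2}\,d\xi-\lambda\dint_{\ren}\frac{|u(x)|^{2}}{|x|^{2s}}\,dx .
\]
By the Hardy inequality \eqref{Hardy} and $0\le\lambda<\Lambda_{N,s}$, the right-hand side is $\ge\big(1-\frac{\lambda}{\Lambda_{N,s}}\big)\int_{\ren}|\xi|^{2s}|\hat{u}|^{2}\,d\xi$, and this, by the classical fractional Sobolev inequality $\int_{\ren}|\xi|^{2s}|\hat{u}|^{2}\,d\xi\ge S_{0}\big(\int_{\ren}|u|^{2^{*}_{s}}\big)^{2/2^{*}_{s}}$ and the identity $|u|^{2^{*}_{s}}=|x|^{-2^{*}_{s}\gamma}|v|^{2^{*}_{s}}$, dominates $\big(1-\frac{\lambda}{\Lambda_{N,s}}\big)S_{0}\big(\int_{\ren}|x|^{-2^{*}_{s}\gamma}|v|^{2^{*}_{s}}\big)^{2/2^{*}_{s}}$. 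Dividing by $2$ we obtain the asserted inequality for such $v$, with $S:=\frac12\big(1-\frac{\lambda}{\Lambda_{N,s}}\big)S_{0}>0$.

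To pass to an arbitrary $v\in\mathcal{C}^{\infty}_{0}(\ren)$ I would multiply by cut-offs $\eta_{\varepsilon}\in\mathcal{C}^{\infty}_{0}(\ren\setminus\{0\})$ with $0\le\eta_{\varepsilon}\le1$, $\eta_{\varepsilon}\equiv1$ off $B_{2\varepsilon}$, $\eta_{\varepsilon}\equiv0$ on $B_{\varepsilon}$ and $|\nabla\eta_{\varepsilon}|\le C/\varepsilon$, apply the previous step to $v_{\varepsilon}:=\eta_{\varepsilon}v$, and let $\varepsilon\to0$. Both sides converge: the hypothesis $\gamma<\frac{N-2s}{2}$ is exactly $2^{*}_{s}\gamma<N$, so $|x|^{-2^{*}_{s}\gamma}$ is locally integrable and $\int|x|^{-2^{*}_{s}\gamma}|v_{\varepsilon}|^{2^{*}_{s}}\to\int|x|^{-2^{*}_{s}\gamma}|v|^{2^{*}_{s}}$ by dominated convergence; on the other side $\iint_{\ren\times\ren}(v(x)-v(y))^{2}\,d\nu<\infty$ for every Lipschitz compactly supported $v$ (the diagonal gives $|x-y|^{2-N-2s}$, integrable because $s<1$, while $|x|^{-\gamma}|y|^{-\gamma}$ is integrable near $0$ because $2\gamma<N$), and the extra commutator term produced by $\eta_{\varepsilon}$ is $O\big(\varepsilon^{\,N-2s-2\gamma}\big)\to0$, again precisely because $\gamma<\frac{N-2s}{2}$. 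This settles the first inequality.

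For the borderline exponent $\gamma=\frac{N-2s}{2}$ one has $\Phi_{N,s}(\gamma)=0$ and $\lambda=\Lambda_{N,s}$: the spectral gap collapses and the simple cut-off fails (its commutator term is now $O(1)$). Here, on the bounded domain $\Omega$, the substitution $v\mapsto u:=|x|^{-\gamma}v$ identifies, up to a fixed positive constant, the seminorm $\iint(v(x)-v(y))^{2}\,d\nu$ with the renormalised Hardy norm $\|u\|_{H(\Omega)}^{2}$ --- this is Lemma~\ref{GS} read in the limit $\gamma\to\frac{N-2s}{2}$ --- so that the estimate reduces to the \emph{improved Hardy--Sobolev inequality} on $\Omega$ (cf.\ \cite{APP}): for every $q<2$ there is $C=C(\Omega,N,s,q)>0$ with
\[
\|u\|_{H(\Omega)}^{2}\ \ge\ C\Big(\dint_{\Omega}|u|^{2^{*}_{s,q}}\Big)^{2/2^{*}_{s,q}},\qquad u\in H(\Omega),
\]
and then $|u|^{2^{*}_{s,q}}=|x|^{-2^{*}_{s,q}\gamma}|v|^{2^{*}_{s,q}}$ yields the claim; the weight is locally integrable since $2^{*}_{s,q}\gamma=\frac{N(N-2s)}{N-qs}<N$ for $q<2$.

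\emph{Main obstacle.} Granting the algebraic identity of Lemma~\ref{GS}, the subcritical case is a coercivity estimate and is essentially routine. The genuinely delicate point is the endpoint $\gamma=\frac{N-2s}{2}$: one needs the improved Hardy--Sobolev inequality with the \emph{sharp} integrability exponent $2^{*}_{s,q}=\frac{2N}{N-qs}$ --- which is strictly larger than the exponent $\frac{Nq}{N-sq}$ given by the plain embedding $W^{s,q}_{0}(\Omega)\hookrightarrow L^{p}$, and so does not follow from $H(\Omega)\hookrightarrow W^{s,q}_{0}(\Omega)$ alone --- and one must carry the ground state identity to the completion $H(\Omega)$, where the naive approximation near the origin breaks down.
\end{pf}
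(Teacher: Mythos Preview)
The paper does not prove Proposition~\ref{Sobolev}; it simply attributes the result to \cite{AB} and moves on. So there is no ``paper's own proof'' to compare against, and your proposal must be judged on its merits.

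\medskip

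\textbf{Subcritical case $0<\gamma<\frac{N-2s}{2}$.} Your argument is correct. The ground state identity of Lemma~\ref{GS} converts the weighted seminorm of $v$ into the Hardy quadratic form for $u=|x|^{-\gamma}v$ with spectral parameter $\lambda<\Lambda_{N,s}$; Hardy then gives coercivity in $\dot H^s$, and the unweighted Sobolev inequality finishes. Your restriction to $v\in\mathcal{C}^\infty_0(\ren\setminus\{0\})$ (so that $u\in\mathcal{C}^\infty_0(\ren)$ and Lemma~\ref{GS} applies literally) followed by a cut-off is the natural way to handle the singularity, and your scaling claim that the commutator term is $O(\varepsilon^{\,N-2s-2\gamma})$ is correct. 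One minor point: you appeal to monotonicity of $\lambda$ in $\gamma$ via ``the limits recorded after \eqref{Ltilde0}'', but limits alone do not give monotonicity; what you actually need is only $\lambda<\Lambda_{N,s}$, and that follows from Lemma~\ref{singularity} (which states $\lambda(\alpha)$ is strictly decreasing) together with $\gamma=\frac{N-2s}{2}-\alpha$.

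\medskip

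\textbf{Critical case $\gamma=\frac{N-2s}{2}$.} Here your write-up is a reduction, not a proof: you correctly identify that, after the ground state transformation, the claim is equivalent to an \emph{improved} Hardy--Sobolev inequality
\[
\|u\|_{H(\Omega)}^{2}\ \ge\ C\,\|u\|_{L^{2^*_{s,q}}(\Omega)}^{2},\qquad q<2,
\]
with $2^*_{s,q}=\frac{2N}{N-qs}$, and you are right that this exponent is strictly larger than the one coming from $H(\Omega)\hookrightarrow W^{s,q}_0(\Omega)\hookrightarrow L^{Nq/(N-sq)}$. You do not prove this inequality, and you flag it yourself as the genuine obstacle. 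So for the endpoint the proposal remains a sketch: the reduction is sound, but the key analytic input is outsourced. Two further technical points you would need to make precise are (i) the validity of the ground state identity at the endpoint $\gamma=\frac{N-2s}{2}$ (Lemma~\ref{GS} is stated only for $\gamma$ strictly below), which requires a limiting argument for $u\in\mathcal{C}^\infty_0(\ren\setminus\{0\})$, and (ii) the density/approximation step carrying the identity from $\mathcal{C}^\infty_0(\Omega\setminus\{0\})$ to the completion $H(\Omega)$, where your own remark shows the naive cut-off produces an $O(1)$ error.
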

Combining the previous proposition and the extension lemma we get the
next Sobolev inequality in the space $Y^{s,\gamma}_0(\Omega)$.

\begin{Proposition} \label{Sobolev00}
Let $\Omega$ be a bounded regular domain and suppose that the
hypotheses of Proposition \ref{Sobolev} hold, then, for all $v\in
\mathcal{C}_{0}^{\infty}(\Omega)$, there exists a positive
constant $S=S(N,s,\g, \Omega)$ such that
$$
\frac{a_{N,s}}{2}\dint_{\Omega}\dint_{\Omega}
\dfrac{|v(x)-v(y)|^{2}}{|x-y|^{N+2s}}\frac{dx}{|x|^{\gamma}}\,
\frac{dy}{|y|^{\gamma}}\geq S \left(\dint_{\Omega}
\dfrac{|v(x)|^{2_{s}^{*}}}{|x|^{2_{s}^{*}\gamma}}\right)^{\frac{2}{2^{*}_{s}}}.
$$
\end{Proposition}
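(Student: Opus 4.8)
The plan is to transport the whole‑space weighted Sobolev inequality of Proposition~\ref{Sobolev} to the bounded domain $\Omega$ by means of the extension Lemma~\ref{ext}, and to dispose of the extra lower–order term that appears by invoking the Poincar\'e inequality of Theorem~\ref{poincare}. First I would fix $v\in\mathcal{C}^\infty_0(\Omega)$, regard it as an element of $Y^{s,\gamma}(\Omega)$, and apply Lemma~\ref{ext} to produce $\tilde v\in Y^{s,\gamma}(\ren)$ with $\tilde v_{|\Omega}=v$ and $\|\tilde v\|_{Y^{s,\gamma}(\ren)}\le C\|v\|_{Y^{s,\gamma}(\Omega)}$, $C=C(N,s,\Omega,\gamma)$. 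Because $\tilde v$ restricts to $v$ on $\Omega$ and all the weights involved are nonnegative, two elementary facts hold:
$$\dint_{\ren}\frac{|\tilde v(x)|^{2^{*}_{s}}}{|x|^{2^{*}_{s}\gamma}}\,dx\ \ge\ \dint_{\Omega}\frac{|v(x)|^{2^{*}_{s}}}{|x|^{2^{*}_{s}\gamma}}\,dx\,,\qquad\text{and}\qquad \dint_{\ren}\dint_{\ren}(\tilde v(x)-\tilde v(y))^2\,d\nu\ \le\ \|\tilde v\|^2_{Y^{s,\gamma}(\ren)}\,.$$

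The next step is to apply Proposition~\ref{Sobolev} to $\tilde v$. Since that statement is formulated only for $v\in\mathcal{C}^\infty_0(\ren)$ while $\tilde v$ merely lies in $Y^{s,\gamma}(\ren)$, a density argument is required: one approximates $\tilde v$ in $Y^{s,\gamma}(\ren)$ by a sequence $w_n\in\mathcal{C}^\infty_0(\ren)$ (truncating near the origin and near infinity and mollifying, using that the weight $|x|^{-2\gamma}$ is locally integrable and belongs to the Muckenhoupt class $A_2$ because $0<2\gamma<N$); applying Proposition~\ref{Sobolev} to $w_n-w_m$ shows in addition that $(w_n)$ is Cauchy in $L^{2^{*}_{s}}(\ren,|x|^{-2^{*}_{s}\gamma}dx)$, so one may pass to the limit and obtain
$$\frac{a_{N,s}}{2}\dint_{\ren}\dint_{\ren}(\tilde v(x)-\tilde v(y))^2\,d\nu\ \ge\ S\Big(\dint_{\ren}\frac{|\tilde v(x)|^{2^{*}_{s}}}{|x|^{2^{*}_{s}\gamma}}\,dx\Big)^{\frac{2}{2^{*}_{s}}}\,.$$

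Finally I would combine these inequalities with $\|\tilde v\|^2_{Y^{s,\gamma}(\ren)}\le C^2\|v\|^2_{Y^{s,\gamma}(\Omega)}=C^2\big(\dint_\Omega v^2\,d\mu+\dint_\Omega\dint_\Omega(v(x)-v(y))^2\,d\nu\big)$ to get
$$\frac{a_{N,s}}{2}\,C^2\Big(\dint_\Omega v^2\,d\mu+\dint_\Omega\dint_\Omega(v(x)-v(y))^2\,d\nu\Big)\ \ge\ S\Big(\dint_{\Omega}\frac{|v(x)|^{2^{*}_{s}}}{|x|^{2^{*}_{s}\gamma}}\,dx\Big)^{\frac{2}{2^{*}_{s}}}\,,$$
and then use Theorem~\ref{poincare} to bound $\dint_\Omega v^2\,d\mu$ by $C_P^{-1}\dint_\Omega\dint_\Omega(v(x)-v(y))^2\,d\nu$, so that the left–hand side is at most $\tfrac{a_{N,s}}{2}C^2(1+C_P^{-1})\dint_\Omega\dint_\Omega(v(x)-v(y))^2\,d\nu$; relabeling the constant (which still depends only on $N,s,\gamma,\Omega$) yields exactly the asserted inequality. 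The only genuinely non-formal point is the density step used to pass Proposition~\ref{Sobolev} from smooth compactly supported functions to the (in general non-smooth) extension $\tilde v$; the rest is bookkeeping, and it works precisely because restriction to $\Omega$ gives a \emph{lower} bound on the weighted $L^{2^{*}_{s}}$ norm whereas the extension estimate gives an \emph{upper} bound on the full–space seminorm, so the two estimates point in compatible directions.
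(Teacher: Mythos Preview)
Your argument is correct and follows essentially the same route as the paper's proof: extend $v$ via Lemma~\ref{ext}, apply the whole-space inequality of Proposition~\ref{Sobolev} to the extension, restrict the right-hand side back to $\Omega$, and finally absorb the $L^2(\Omega,d\mu)$ term using Theorem~\ref{poincare}. The only difference is that you take extra care to justify applying Proposition~\ref{Sobolev} to $\tilde v\in Y^{s,\gamma}(\ren)$ (rather than to a $\mathcal{C}^\infty_0$ function) by a density argument, a point the paper passes over silently.
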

\begin{proof}
Let $v\in \mathcal{C}^\infty_0(\Omega)$ and define $\tilde{v}$ to
be the extension of $v$ to $\ren$ given in Lemma \ref{ext}. Then
using Proposition \ref{Sobolev}, we get
$$
C \|{v}\|_{Y^{s,\g}(\Omega)}\ge
\frac{a_{N,s}}{2}\dint_{\ren}\dint_{\ren}\dfrac{|\tilde{v}(x)-\tilde{v}(y)|^{2}}{|x-y|^{N+2s}}\dfrac{dxdy}{|x|^\g|y|^\g}\ge
S(N,s,\gamma)\left(\dint_{\ren}\frac{|\tilde{v}(x)|^{2^*_s}}{|x|^{2^*_s\gamma}}dx\right)^{\frac{2}{2^{*}_{s}}}.$$
Since $\tilde{v}_{|\Omega}=v$, then using Theorem \ref{poincare},
we reach that
$$
\frac{a_{N,s}}{2}\dint_{\Omega}\dint_{\Omega}\dfrac{|{v}(x)-{v}(y)|^{2}}{|x-y|^{N+2s}}\dfrac{dxdy}{|x|^\g|y|^\g}\ge
S(N,s,\gamma,
\Omega)\left(\dint_{\Omega}\frac{|{v}(x)|^{2^*_s}}{|x|^{2^*_s\gamma}}dx\right)^{\frac{2}{2^{*}_{s}}}
$$ and the result follows.
\end{proof}
We state now a weighted version of the Poincaré-Wirtinger
inequality that we will use later (see Appendix B in \cite{AMPP} for a proof).
\begin{Theorem}\label{PW}
Let $r>0$ and $w\in
Y^{s,\g}(B_r)$ and assume that $\psi$ is a radial
decreasing function such that $\text{supp}\:\psi\subset B_r$ and $0\lneqq \psi\le 1$. Define
$$
W_\psi:=\dfrac{\int_{B_r}w(x)\psi(x)d\mu}{\int_{B_r}\psi(x)d\mu}.
$$
Then,
 $$ \int_{B_r}(w(x)-W_\psi)^2\psi(x)d\mu\le C r^{2s}\int_{B_r}
\int_{B_r}(w(x)-w(y))^2\min\{\psi(x),
\psi(y)\}d\nu.
$$
\end{Theorem}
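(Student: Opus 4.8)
The plan is to exploit the radial monotonicity of $\psi$: one splits the estimate scale by scale via the coarea (layer-cake) formula, thereby reducing matters to a scale-invariant Poincar\'e--Wirtinger inequality on balls. After a routine approximation we may assume $\psi$ continuous and strictly decreasing in $|x|$ on $\{\psi>0\}$. For $\varrho>0$ set $(w)_\varrho:=\mu(B_\varrho)^{-1}\int_{B_\varrho}w\,d\mu$, and for $0<t<\|\psi\|_\infty$ let $\sigma(t)$ be the radius of the superlevel ball $\{\psi>t\}=B_{\sigma(t)}$; thus $\sigma$ is nonincreasing, $x\in B_{\sigma(t)}\iff\psi(x)>t$, so
$$\int_0^{\|\psi\|_\infty}\mathbf 1_{B_{\sigma(t)}}(x)\,\mathbf 1_{B_{\sigma(t)}}(y)\,dt=\min\{\psi(x),\psi(y)\},$$
and by the layer-cake formula $\Psi:=\int_{B_r}\psi\,d\mu=\int_0^{\|\psi\|_\infty}\mu(B_{\sigma(t)})\,dt$ and $W_\psi=\Psi^{-1}\int_0^{\|\psi\|_\infty}\mu(B_{\sigma(t)})(w)_{\sigma(t)}\,dt$, i.e. $W_\psi$ is the $\mu(B_{\sigma(\cdot)})\,dt$-average of the family $\{(w)_{\sigma(t)}\}_t$.

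\textit{Step 1 (Poincar\'e--Wirtinger on balls).} For every $\varrho\le r$ and $v\in Y^{s,\gamma}(B_\varrho)$,
$$\int_{B_\varrho}|v-(v)_\varrho|^2\,d\mu\;\le\;C_0\,\varrho^{2s}\int_{B_\varrho}\int_{B_\varrho}(v(x)-v(y))^2\,d\nu\;\le\;C_0\,r^{2s}\int_{B_\varrho}\int_{B_\varrho}(v(x)-v(y))^2\,d\nu,$$
with $C_0=C_0(N,s,\gamma)$. For $\varrho=1$ this follows from the compactness of the embedding $Y^{s,\gamma}(B_1)\hookrightarrow L^2(B_1,d\mu)$ by the standard quotient-by-constants contradiction argument; the compactness is obtained by combining the extension Lemma \ref{ext} and the usual fractional Rellich theorem away from the origin with the weighted Sobolev inequality of Proposition \ref{Sobolev00} near the origin (applicable since $\gamma<\tfrac{N-2s}{2}$, and giving $\int_{B_\delta}|v|^2\,d\mu\lesssim\delta^{2s}\|v\|_{Y^{s,\gamma}}^2$). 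The general $\varrho$ follows from the dilation $x\mapsto\varrho x$, which multiplies the left side by $\varrho^{N-2\gamma}$ and the right by $\varrho^{N-2s-2\gamma}$, so the constant picks up a factor $\varrho^{2s}\le r^{2s}$.

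\textit{Step 2 (coarea decomposition; the ``diagonal'' term).} Keeping $W_\psi$ and using the layer-cake formula,
$$\int_{B_r}|w-W_\psi|^2\psi\,d\mu=\int_0^{\|\psi\|_\infty}\!\int_{B_{\sigma(t)}}\!|w-W_\psi|^2\,d\mu\,dt\le 2\!\int_0^{\|\psi\|_\infty}\!\int_{B_{\sigma(t)}}\!|w-(w)_{\sigma(t)}|^2\,d\mu\,dt+2\!\int_0^{\|\psi\|_\infty}\!\!\mu(B_{\sigma(t)})\,|(w)_{\sigma(t)}-W_\psi|^2\,dt.$$
For the first term Step 1 gives $\int_{B_{\sigma(t)}}|w-(w)_{\sigma(t)}|^2\,d\mu\le C_0 r^{2s}\iint_{B_{\sigma(t)}\times B_{\sigma(t)}}(w(x)-w(y))^2\,d\nu$; integrating in $t$ and using $\int_0^{\|\psi\|_\infty}\mathbf 1_{B_{\sigma(t)}}(x)\mathbf 1_{B_{\sigma(t)}}(y)\,dt=\min\{\psi(x),\psi(y)\}$ turns this term into $2C_0 r^{2s}\iint_{B_r\times B_r}(w(x)-w(y))^2\min\{\psi(x),\psi(y)\}\,d\nu$, exactly of the required form.

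\textit{Step 3 (the drift term; main difficulty).} It remains to bound $H:=\int_0^{\|\psi\|_\infty}\mu(B_{\sigma(t)})|(w)_{\sigma(t)}-W_\psi|^2\,dt$. Since $W_\psi$ is the $\mu(B_{\sigma(\cdot)})\,dt$-average of $\{(w)_{\sigma(\tau)}\}_\tau$, Jensen gives $H\le\Psi^{-1}\iint\mu(B_{\sigma(t)})\mu(B_{\sigma(\tau)})|(w)_{\sigma(t)}-(w)_{\sigma(\tau)}|^2\,dt\,d\tau$, so everything reduces to controlling the oscillation of $\varrho\mapsto(w)_\varrho$ along the nested radii $\sigma(t)$. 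This is done by dyadic chaining: for $\varrho'\le\varrho$ telescope $(w)_\varrho-(w)_{\varrho'}$ over the scales $\varrho,\varrho/2,\varrho/4,\dots$, estimate each increment by $|(w)_{2^{-k}\varrho}-(w)_{2^{-k-1}\varrho}|^2\le 2^{N-2\gamma}\mu(B_{2^{-k}\varrho})^{-1}\int_{B_{2^{-k}\varrho}}|w-(w)_{2^{-k}\varrho}|^2\,d\mu$, bound this via Step 1 by $C(2^{-k}\varrho)^{2s}\mu(B_{2^{-k}\varrho})^{-1}\iint_{B_{2^{-k}\varrho}\times B_{2^{-k}\varrho}}(w(x)-w(y))^2\,d\nu$, and use $\min\{\psi(x),\psi(y)\}\ge\psi(2^{-k}\varrho\,e_1)$ on $B_{2^{-k}\varrho}\times B_{2^{-k}\varrho}$ to replace this local energy by $\psi(2^{-k}\varrho\,e_1)^{-1}$ times the global weighted energy $\iint(w(x)-w(y))^2\min\{\psi(x),\psi(y)\}\,d\nu$. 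Squaring the telescoped sum (with a Cauchy--Schwarz against a summable weight sequence so as to avoid a logarithmic loss) and then integrating in $(t,\tau)$ against $\Psi^{-1}\mu(B_{\sigma(t)})\mu(B_{\sigma(\tau)})\,dt\,d\tau$, the factors $\mu(B_{\sigma(\cdot)})$, the values $\psi(\sigma(\cdot)\,e_1)$ (which coincide with the integration variable) and the range of integration telescope, leaving $H\le C r^{2s}\iint(w(x)-w(y))^2\min\{\psi(x),\psi(y)\}\,d\nu$; combined with Step 2 this is the claim. The genuinely delicate point is exactly this bookkeeping: the crude one-step bound $|(w)_{\sigma(t)}-(w)_{\sigma(\tau)}|^2\lesssim r^{2s}t^{-1}\mu(B_{\sigma(\tau)})^{-1}\iint(w(x)-w(y))^2\min\{\psi(x),\psi(y)\}\,d\nu$ produces a non-integrable $t^{-1}$ near $t=0$, so one must pass through all intermediate dyadic scales and balance the resulting geometric sums against the weights $\mu(B_{\sigma(t)})\,dt$ under the constraint $\int_0^{\|\psi\|_\infty}\mu(B_{\sigma(t)})\,dt=\Psi$, ensuring that the final constant depends only on $N,s,\gamma$ (and $\Omega=B_r$ through the scaling) and not on $\psi$ or $w$.
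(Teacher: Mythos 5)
Your Steps 1 and 2 are correct and match the canonical approach to such weighted Poincar\'e--Wirtinger inequalities: the dilation argument giving the $\varrho^{2s}$ scaling in Step 1 is right (under $x\mapsto\varrho x$ the left side scales by $\varrho^{N-2\gamma}$, the right by $\varrho^{N-2s-2\gamma}$, and $\gamma<\frac{N-2s}{2}$ makes the quotient $\varrho^{2s}$), and the layer-cake identity $\int_0^{\|\psi\|_\infty}\mathbf 1_{B_{\sigma(t)}}(x)\mathbf 1_{B_{\sigma(t)}}(y)\,dt=\min\{\psi(x),\psi(y)\}$ does exactly reduce the ``diagonal'' contribution to the claimed right-hand side. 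The paper itself gives no proof of Theorem~\ref{PW} (it refers to Appendix~B of \cite{AMPP}), so I cannot compare mechanics, but the coarea decomposition is almost certainly how any proof must start.

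The gap is Step 3, and it is a real one: you name the obstruction correctly (the one-step bound $|(w)_{\sigma(t)}-(w)_{\sigma(\tau)}|^2\lesssim r^{2s}\,t^{-1}\mu(B_{\sigma(\tau)})^{-1}E$ produces a divergent $\int t^{-1}\,dt$), but you then assert that a dyadic chaining with a weighted Cauchy--Schwarz ``telescopes'' and leaves $H\le Cr^{2s}E$, without actually exhibiting the weight sequence or checking convergence. When one tries to do this, the trouble does not go away by itself. Because $N-2\gamma>2s$, the dyadic increments $|(w)_{2^{-j}\varrho}-(w)_{2^{-j-1}\varrho}|\lesssim (2^{-j}\varrho)^{s-(N-2\gamma)/2}\psi(2^{-j}\varrho\,e_1)^{-1/2}E^{1/2}$ grow geometrically in $j$, so after any Cauchy--Schwarz the chained sum is still dominated by its last (smallest-scale) term; this only improves $t^{-1}$ to $\tau^{-1}$ (the larger of the two levels), and the resulting integral $\int_0^{T}\sigma(\tau)^{2s}\tau^{-1}\,d\tau$ still diverges because $\sigma(\tau)\to r_0>0$ as $\tau\to 0$. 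Replacing the global bound ``local energy $\le\psi(\rho e_1)^{-1}E$'' by the sharper annular decomposition $E\ge\sum_j(t_j-t_{j-1})F_j$ (with $t_j=\psi(\rho_j e_1)$ and $F_j$ the Gagliardo energy on $B_{\rho_j}$) is the natural next move, but making the weights $(t_j-t_{j-1})$ reappear with the right power after squaring the telescoped sum requires a specific, nontrivial choice of Cauchy--Schwarz weights and an argument that the resulting constant is independent of the sequence $\{t_j\}$ (equivalently of $\psi$); none of this is in your write-up. Even in the concrete case used later in the paper ($\psi\equiv 1$ on $B_{6r}$, $\mathrm{supp}\,\psi\subset B_{7r}$), the drift bound requires this care because $\min\{\psi(x),\psi(y)\}$ degenerates to $0$ on the annulus $B_{7r}\setminus B_{6r}$. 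You also cannot avoid the chaining by the direct Jensen/Cauchy--Schwarz route, which only yields the weaker inequality with the non-singular weight $\psi(x)\psi(y)\,d\mu(x)d\mu(y)$ on the right; the pointwise comparison $\psi(x)\psi(y)\,d\mu\,d\mu\lesssim r^{2s}\Psi\min\{\psi(x),\psi(y)\}\,d\nu$ fails near the origin. So the heart of the theorem --- control of the drift term with a constant that depends only on $N,s,\gamma$ and a factor $r^{2s}$ --- is still missing, and Step 3 needs to be replaced by an actual estimate rather than a description of one.
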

\noindent Finally, we define
$$Y^{s,\gamma}_{loc}(\Omega):=\{ u \in L_{loc}^2(\Omega,d\mu)\,\mbox{ s.t. }\, \forall\, \Omega_1\subset\subset \Omega,
\int_{\Omega_1}\int_{\Omega_1}{\frac{|u(x)-u(y)|^2}{|x-y|^{N+2s}}}\,
\frac{dx}{|x|^\g}\frac{dy}{|y|^\g}+\int_{\Omega_1}u^2 d\mu<+\infty\}.$$
\begin{remark}
This definition  for $\gamma=0$ is similar. In such a case we will denote the associated space as $H_{loc}^s(\Omega)$.
\end{remark}
We consider the following natural definition.
\begin{Definition}\label{super00}
Let $v\in Y^{s,\g}_{loc}(\Omega)$. We say that $v$ is a
supersolution to problem \eqref{equi} if
\begin{equation}\label{super}
\iint_{{D_{\Omega_1}}}(v(x) - v(y))(\varphi(x)- \varphi(y))d\nu
\ge \dint_{\Omega_1}\,g\varphi\,dx\end{equation} for every
nonnegative $ \varphi \in Y_0^{s,\g} (\Omega_1)$ and every
$\O_1\subset\subset \O$.
\end{Definition}
An integral extension involving positive Radon measures of a well-known punctual  inequality by Picone (see \cite{Pi}) was obtained in \cite{AP} in the local framework. An extension to the fractional    setting has been obtained in \cite{LPPS}. A similar inequality holds for the operator
\begin{equation}\label{LgO}
L_{\g, \O}(w)(x):=a_{N,s}\mbox{P.V.}\int_\O\frac{w(x)-w(y)}{|x-y|^{N+2s}}\frac{dy}{|x|^\g|y|^\g}.
\end{equation}
Notice that, if $\Omega=\R^N$, $L_{\g,\R^N}$ coincides with $L_\g$ defined in \eqref{Ltilde0}.
\begin{Theorem}{\it (Picone's type Inequality).}\label{Picone}
Let $w\in Y^{s,\g}(\O)$ be such that $w>0$ in $\O$, and assume that
$L_{\g, \O}(w)= \nu$ with $\nu\in L^1_{loc}(\ren)$ and $\nu\gneqq
0$. Then for all $v\in \mathcal{C}^\infty_0(\O)$ we have
\begin{equation}\label{picone1}
\frac {a_{N,s}}{2}
\dint_{\O}\dint_{\O}\dfrac{|v(x)-v(y)|^{2}}{|x-y|^{N+2s}}\dfrac{dxdy}{|x|^\g|y|^\g}\ge
\langle L_{\g,\O} (w),\frac{v^2}{w}\rangle
\end{equation}
where  $ L_{\gamma, \Omega}w$ is defined by \eqref{LgO}.
\end{Theorem}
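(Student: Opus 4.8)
The plan is to adapt the proof of the fractional Picone inequality from \cite{LPPS} (the case $\gamma=0$) to the weighted operator $L_{\gamma,\Omega}$, the key algebraic input being the same pointwise inequality used there. First I would record the elementary pointwise inequality: for all $a,b>0$ and all real $c,d$,
\begin{equation*}
(c-d)^2\ \ge\ \Big(\frac{c^2}{a}-\frac{d^2}{b}\Big)(a-b),
\end{equation*}
which is equivalent, after expanding, to $\big(c\sqrt{b/a}-d\sqrt{a/b}\big)^2\ge 0$. Applying this with $c=v(x)$, $d=v(y)$, $a=w(x)$, $b=w(y)$ gives, for every $x\neq y$,
\begin{equation*}
|v(x)-v(y)|^2\ \ge\ \Big(\frac{v^2(x)}{w(x)}-\frac{v^2(y)}{w(y)}\Big)\big(w(x)-w(y)\big).
\end{equation*}

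Next I would divide by $|x-y|^{N+2s}|x|^\gamma|y|^\gamma$, integrate over $\Omega\times\Omega$, and symmetrize. The left-hand side becomes exactly $\frac{a_{N,s}}{2}\iint_\Omega\iint_\Omega \frac{|v(x)-v(y)|^2}{|x-y|^{N+2s}}\,d\nu$ (using the notation $d\nu$ from the excerpt, up to the constant $a_{N,s}$ that I keep track of explicitly). On the right-hand side, the usual symmetrization identity
\begin{equation*}
a_{N,s}\iint_\Omega\iint_\Omega \frac{\big(\phi(x)-\phi(y)\big)\big(w(x)-w(y)\big)}{|x-y|^{N+2s}|x|^\gamma|y|^\gamma}\,dx\,dy
= 2\dint_\Omega \phi(x)\, L_{\gamma,\Omega}w(x)\,dx
\end{equation*}
with $\phi=v^2/w$ identifies the right-hand side with $2\langle L_{\gamma,\Omega}w,\,v^2/w\rangle$, giving \eqref{picone1} once the factor $a_{N,s}/2$ is distributed correctly. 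Note that $\phi=v^2/w$ is a legitimate choice because $v\in\mathcal C^\infty_0(\Omega)$ and $w>0$ in $\Omega$, so $\phi$ is bounded with compact support; the pairing $\langle L_{\gamma,\Omega}w,v^2/w\rangle$ makes sense since $L_{\gamma,\Omega}w=\nu\in L^1_{\mathrm{loc}}$ and $\nu\gneqq 0$ (here the sign of $\nu$ guarantees the pairing is well defined in $[0,+\infty]$, so no cancellation issues arise even if $\nu$ is only locally integrable).

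The main obstacle is making the formal manipulations rigorous: the principal-value integrals and the symmetrization identity must be justified despite the singular weights $|x|^{-\gamma},|y|^{-\gamma}$ and the nonlocal singularity at $x=y$. The standard device is to truncate, replacing $\Omega\times\Omega$ by $\{|x-y|>\delta\}$ (and, if needed, excising a small ball around the origin), apply the pointwise inequality and Fubini on the truncated region where everything is absolutely convergent, and then pass to the limit $\delta\to0$ using monotone convergence on the (nonnegative) right-hand side and dominated convergence controlled by $\|v\|_{Y^{s,\gamma}(\Omega)}<\infty$ on the left. One must also check that $v^2/w\in Y^{s,\gamma}(\Omega)$ (or at least lies in the natural class pairing with $\nu$), which follows from $w>0$ on the support of $v$ together with $w\in Y^{s,\gamma}(\Omega)$; a short computation bounding $|\phi(x)-\phi(y)|$ in terms of $|v(x)-v(y)|$ and $|w(x)-w(y)|$ on $\mathrm{supp}\,v$ does this. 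Once these technical points are in place — all of which parallel the $\gamma=0$ treatment in \cite{LPPS}, with the weight $|x|^{-\gamma}|y|^{-\gamma}$ carried passively through every step — the inequality \eqref{picone1} follows.
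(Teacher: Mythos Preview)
Your proposal is correct and follows exactly the approach the paper indicates: the paper's ``proof'' consists entirely of the remark that the argument is the same as in \cite{LPPS}, based on the pointwise inequality $(c-d)^2\ge(c^2/a-d^2/b)(a-b)$ for $a,b>0$ that you recorded, with the weight $|x|^{-\gamma}|y|^{-\gamma}$ carried passively through. You have in fact supplied more detail than the paper does (the symmetrization identity and the truncation needed to justify the principal-value manipulations), with only a harmless notational slip where $|x-y|^{-(N+2s)}$ appears redundantly once alongside $d\nu$.
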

The proof is the same as in \cite{LPPS}, where is based in a punctual inequality.
As a consequence, we have the next comparison principle that
extends to the weighted fractional framework the classical one obtained
by Brezis and Kamin in \cite{BK}.

\begin{Lemma}\label{compar}
Let $\O$ be  a bounded domain and let $f$ be a nonnegative
continuous function such that $f(x, \sigma)>0$ if $\sigma>0$ and
$\dfrac{f(x,\sigma)}{\sigma}$ is decreasing. Let $u,v\in
Y^{s,\gamma}_0(\O)$ be such that $u,v>0$ in $\O$ and
$$\left\{
\begin{array}{rcl}
L_{\g,\O}(u) &\geq & f(x, u)\mbox{  in  }\O,\\
L_{\g,\O} (v) & \le & f(x,v)\mbox{  in   }\O.
\end{array}
\right.
$$
Then, $u\geq v\inn\Omega$.
\end{Lemma}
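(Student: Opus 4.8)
The plan is to mimic the classical Brezis--Kamin argument, using the Picone inequality of Theorem~\ref{Picone} as the key analytic ingredient. Set $w := u$ and $v := v$ in the statement, and argue by contradiction assuming the set where $v > u$ is nontrivial. The natural quantity to test with is $v^2/u$ (resp.\ $u^2/v$), so first I would check that this is an admissible test function: since $u,v\in Y^{s,\g}_0(\O)$ are strictly positive in $\O$ and $\O$ is bounded, $u$ is bounded below on compact subsets, and one shows $v^2/u\in Y^{s,\g}_0(\O)$ by the usual truncation/approximation procedure (replace $u$ by $u+\e$, work with $(v-u)_+$ cut off, and pass to the limit). A standard density argument then allows us to use $\varphi = v^2/u$ as a test function in the weak formulation of $L_{\g,\O}(u)\ge f(x,u)$, and symmetrically $\varphi = u^2/v$ in $L_{\g,\O}(v)\le f(x,v)$.

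The core computation is as follows. Apply Theorem~\ref{Picone} with the roles swapped: since $L_{\g,\O}(u)\ge f(x,u)\gneqq 0$ and $L_{\g,\O}(v)\le f(x,v)$ (with $v>0$, $f(x,v)\ge 0$), Picone's inequality gives
\begin{equation*}
\frac{a_{N,s}}{2}\dint_{\O}\dint_{\O}\frac{|v(x)-v(y)|^2}{|x-y|^{N+2s}}\,\frac{dxdy}{|x|^\g|y|^\g}\ge \langle L_{\g,\O}(u),\tfrac{v^2}{u}\rangle\ge \dint_\O f(x,u)\,\frac{v^2}{u}\,dx,
\end{equation*}
and symmetrically, interchanging $u$ and $v$,
\begin{equation*}
\frac{a_{N,s}}{2}\dint_{\O}\dint_{\O}\frac{|v(x)-v(y)|^2}{|x-y|^{N+2s}}\,\frac{dxdy}{|x|^\g|y|^\g}\ge \langle L_{\g,\O}(v),\tfrac{u^2}{v}\rangle.
\end{equation*}
Actually the cleaner route is to test the two differential inequalities directly: using $\varphi = v^2/u - u$ is not sign-definite, so instead one forms the combination of the bilinear forms $\langle L_{\g,\O}(u), v^2/u\rangle - \langle L_{\g,\O}(v), v\rangle$ and $\langle L_{\g,\O}(v), u^2/v\rangle - \langle L_{\g,\O}(u), u\rangle$, and adds. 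By the pointwise Picone-type identity underlying Theorem~\ref{Picone} (which, as the authors note, is the real engine), the left-hand side of the sum is $\le 0$, while the right-hand side equals
\begin{equation*}
\dint_\O \Big(\frac{f(x,u)}{u}-\frac{f(x,v)}{v}\Big)(v^2-u^2)\,dx.
\end{equation*}
Since $\sigma\mapsto f(x,\sigma)/\sigma$ is decreasing, the integrand is pointwise $\ge 0$, and it is $>0$ precisely on $\{v>u\}$ (where $f(x,u)/u > f(x,v)/v$ and $v^2-u^2>0$) together with $\{u>v\}$. Hence the integral is $\ge 0$, forcing it to be $0$, which forces $\med\{v\ne u\}=0$, i.e.\ $u\ge v$ (indeed $u=v$) a.e.\ in $\O$.

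I expect the main obstacle to be the \emph{justification of the test functions and the splitting of the bilinear form} rather than the final sign argument. Two delicate points: (i) showing $v^2/u, u^2/v \in Y^{s,\g}_0(\O)$ so that they are legitimate in the weak formulations — this needs $u,v$ bounded away from $0$ on compacta (true since they are continuous positive, or at least lower semicontinuous after regularization) plus a careful chain-rule/truncation estimate controlling the weighted Gagliardo seminorm of the quotient, done first for $u+\e$ and then letting $\e\to 0$; and (ii) rigorously reducing the combination of the four nonlocal pairings to the pointwise Picone identity of \cite{LPPS}, which requires Fubini on $D_\O$ with the singular weight $d\nu$ and near $x=0$ — exactly the kind of weighted radial nonlocal bookkeeping that the paper flags as technical. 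Once those are in place, the decreasing-quotient hypothesis closes the argument immediately, exactly as in the local case of \cite{BK}.
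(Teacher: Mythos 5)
Your high-level strategy (Brezis--Kamin via a nonlocal discrete Picone inequality) is the right one, and it is what the paper gestures at: the authors themselves only say that the proof is the same as for constant coefficients and rests on pointwise inequalities from \cite{LPPS}, without writing it out. However, the specific algebraic combination you propose does not close.

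Look at the sum $A+B$ with $A=\langle L_{\g,\O}(u),v^2/u\rangle-\langle L_{\g,\O}(v),v\rangle$ and $B=\langle L_{\g,\O}(v),u^2/v\rangle-\langle L_{\g,\O}(u),u\rangle$. Picone (applied twice, with $w=u$ and $w=v$) indeed gives $A\le 0$ and $B\le 0$, hence $A+B\le 0$. But to reach the claimed lower bound
\[
A+B\ \ge\ \dint_\O\Big(\tfrac{f(x,u)}{u}-\tfrac{f(x,v)}{v}\Big)(v^2-u^2)\,dx,
\]
you would need all four pairings to be bounded in the \emph{same} direction by the corresponding $f$-integrals. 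They are not: the supersolution inequality for $u$ gives $\langle L(u),v^2/u\rangle\ge\int f(x,u)v^2/u$ and $\langle L(u),u\rangle\ge\int f(x,u)u$ (the second enters with a minus sign, so it points the \emph{wrong} way), while the subsolution inequality for $v$ gives $\langle L(v),v\rangle\le\int f(x,v)v$ and $\langle L(v),u^2/v\rangle\le\int f(x,v)u^2/v$ (here the \emph{second} is the one pointing the wrong way). Two of the four inequalities have the opposite orientation, so $A+B$ cannot be squeezed between $0$ and the desired integral. This is not a technicality about admissibility of test functions; the sign bookkeeping itself fails. (It is also a warning sign that your chain would deliver $u=v$ a.e., which is strictly stronger than the stated conclusion $u\ge v$ and false in general for a genuine super/subsolution pair.)

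The standard repair --- and the route \cite{LPPS} takes --- is to truncate to the set where the conclusion could fail. Test the $u$-inequality with $\phi_1:=\dfrac{(v^2-u^2)_+}{u}$ and the $v$-inequality with $\phi_2:=\dfrac{(v^2-u^2)_+}{v}$ (both nonnegative), and \emph{subtract}:
\[
\langle L_{\g,\O}(u),\phi_1\rangle-\langle L_{\g,\O}(v),\phi_2\rangle\ \ge\ \dint_\O\Big(\tfrac{f(x,u)}{u}-\tfrac{f(x,v)}{v}\Big)(v^2-u^2)_+\,dx\ \ge 0,
\]
with strict inequality on the right if $\{v>u\}$ has positive measure, while the pointwise (truncated) Picone identity makes the left side $\le 0$. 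This forces $|\{v>u\}|_{d\mu}=0$, i.e.\ $u\ge v$, which is exactly the claim. So keep your functional-analytic scaffolding (admissibility of $v^2/(u+\e)$, passage $\e\to 0$, Fubini with the weight $d\nu$), but replace the four-term ``add'' by the two-term ``subtract on $\{v>u\}$'' scheme; otherwise the sign argument collapses.
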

\begin{proof}
The proof is the same as in the case of constant coefficients, since it relies on several pointwise inequalities (see \cite{LPPS} for details).
\end{proof}
In the sequel we will need the next compactness result.
\begin{Lemma}\label{compa}
Let $\{u_n\}_{n\in\N}\subset Y^{s,\g}_0(\O)$ be an increasing  sequence of nonnegative
functions such that $L_{\g,\O} (u_n)\ge 0$. Assume that
$u_n\rightharpoonup u$ weakly in $Y^{s,\g}_0(\O)$. Then, $u_n\to u$
strongly in $Y^{s,\g}_0(\O)$.
\end{Lemma}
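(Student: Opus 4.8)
The plan is to adapt the classical argument for monotone sequences of superharmonic functions to the weighted nonlocal setting. The weak convergence $u_n\rightharpoonup u$ in $Y^{s,\gamma}_0(\Omega)$ already gives $\liminf_n |||u_n|||_{Y^{s,\gamma}_0(\Omega)}\ge |||u|||_{Y^{s,\gamma}_0(\Omega)}$; since the seminorm is (equivalent to) the Hilbert norm, strong convergence is equivalent to the reverse inequality $\limsup_n |||u_n|||_{Y^{s,\gamma}_0(\Omega)}\le |||u|||_{Y^{s,\gamma}_0(\Omega)}$. So the real content is to prove
$$
\limsup_{n\to\infty}\iint_{D_\Omega}(u_n(x)-u_n(y))^2\,d\nu\le \iint_{D_\Omega}(u(x)-u(y))^2\,d\nu.
$$

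First I would test the supersolution inequality $L_{\gamma,\Omega}(u_n)\ge 0$ against the admissible test function $\varphi=u_n-u_m$ for $m\ge n$ (this is nonnegative by monotonicity, and lies in $Y^{s,\gamma}_0(\Omega)$), obtaining
$$
\iint_{D_\Omega}\bigl(u_n(x)-u_n(y)\bigr)\bigl((u_n-u_m)(x)-(u_n-u_m)(y)\bigr)\,d\nu\ge 0.
$$
Equivalently, writing $[w]:=w(x)-w(y)$ under the integral,
$$
\iint_{D_\Omega}[u_n]^2\,d\nu\ge \iint_{D_\Omega}[u_n][u_m]\,d\nu.
$$
Now I let $m\to\infty$ with $n$ fixed: since $u_m\rightharpoonup u$ weakly and $[u_n]\in L^2(D_\Omega,d\nu)$ is a fixed element, the bilinear pairing converges, giving $\iint[u_n]^2\,d\nu\ge \iint[u_n][u]\,d\nu$. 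Next I take $\limsup_{n\to\infty}$ on the right-hand side; by weak lower semicontinuity applied to the linear functional $w\mapsto \iint[w][u]\,d\nu$ we get $\iint[u_n][u]\,d\nu\to\iint[u]^2\,d\nu$. Therefore $\limsup_n\iint[u_n]^2\,d\nu\ge\iint[u]^2\,d\nu$; combined with weak lower semicontinuity of the norm this forces $\iint[u_n]^2\,d\nu\to\iint[u]^2\,d\nu$, hence strong convergence in the seminorm, and by the Poincar\'e inequality (Theorem~\ref{poincare}) in the full norm of $Y^{s,\gamma}_0(\Omega)$.

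The main technical obstacle is the admissibility of $u_n-u_m$ as a test function in the weak formulation of $L_{\gamma,\Omega}(u_n)\ge 0$: Definition~\ref{super00} pairs against $\varphi\in Y^{s,\gamma}_0(\Omega_1)$ for $\Omega_1\subset\subset\Omega$, whereas here I need a global test function vanishing outside $\Omega$. I would handle this by a density/exhaustion argument — approximate $u_n-u_m$ in $Y^{s,\gamma}_0(\Omega)$ by functions compactly supported in an exhausting sequence $\Omega_1\subset\subset\Omega$ (using that $u_n-u_m\in Y^{s,\gamma}_0(\Omega)$ is, by definition, a limit of $\mathcal C^\infty_0(\Omega)$ functions), apply the inequality on each $\Omega_1$, and pass to the limit, checking that the tail contributions over $D_\Omega\setminus D_{\Omega_1}$ vanish because $[u_n]\in L^2(D_\Omega,d\nu)$. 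A secondary point to verify carefully is that one may indeed take the limit inside the bilinear form $\iint[u_n][u_m]\,d\nu$ as $m\to\infty$: this is just continuity of $\ell(w):=\iint[u_n][w]\,d\nu$ on $Y^{s,\gamma}_0(\Omega)$, which follows from Cauchy–Schwarz in $L^2(D_\Omega,d\nu)$ since $u_n$ is fixed. Everything else is a routine repetition of the constant-coefficient case.
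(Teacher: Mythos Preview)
Your overall strategy is exactly the paper's --- test the supersolution against the difference with the limit, obtain a one-sided norm inequality, and upgrade weak to strong convergence via norm convergence in a Hilbert space --- but there is a sign slip that, as written, breaks the argument.

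For an \emph{increasing} sequence with $m\ge n$ one has $u_m\ge u_n$, so $\varphi=u_n-u_m\le 0$, not $\ge 0$. Testing $L_{\gamma,\Omega}(u_n)\ge 0$ against this nonpositive function therefore gives
\[
\iint_{D_\Omega}[u_n]\bigl([u_n]-[u_m]\bigr)\,d\nu\le 0,\qquad\text{i.e.}\qquad \iint_{D_\Omega}[u_n]^2\,d\nu\le \iint_{D_\Omega}[u_n][u_m]\,d\nu,
\]
the reverse of what you wrote. With your inequality the conclusion becomes $\limsup_n\iint[u_n]^2\,d\nu\ge\iint[u]^2\,d\nu$, which is vacuous: weak lower semicontinuity already yields $\liminf_n\iint[u_n]^2\,d\nu\ge\iint[u]^2\,d\nu$, and the two together say nothing about an upper bound, so no convergence of norms follows.

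Once the sign is corrected, the intermediate $u_m$ is unnecessary: since $u\in Y^{s,\gamma}_0(\Omega)$ (weak limit in a Hilbert space), one may test directly with $u-u_n\ge 0$ to get
\[
\iint_{D_\Omega}[u_n]^2\,d\nu\le \iint_{D_\Omega}[u_n][u]\,d\nu\le \tfrac12\iint_{D_\Omega}[u_n]^2\,d\nu+\tfrac12\iint_{D_\Omega}[u]^2\,d\nu,
\]
hence $\iint[u_n]^2\,d\nu\le \iint[u]^2\,d\nu$ for every $n$; together with weak lower semicontinuity this forces $\|u_n\|\to\|u\|$ and thus strong convergence. This is precisely the paper's proof. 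Your remark about the domain of test functions (Definition~\ref{super00} only allows $\varphi\in Y^{s,\gamma}_0(\Omega_1)$ with $\Omega_1\subset\subset\Omega$) and the density/exhaustion fix is a valid point that the paper leaves implicit.
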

\begin{proof}
Since $L_{\g,\O}(u_n)\ge 0$, then $\langle L_{\g,\O} (u_n),
w_n\rangle \le 0$, where $w_n:= u_n-u$. Thus
$$
\dint_{\O}\dint_{\O}\dfrac{(u_n(x)-u_n(y))(w_n(x)-w_n(y))}{|x-y|^{N+2s}}\dfrac{dxdy}{|x|^\g|y|^\g}\le
0.
$$
Since
$$
(u_n(x)-u_n(y))(w_n(x)-w_n(y))=(u_n(x)-u_n(y))^2-(u_n(x)-u_n(y))(u(x)-u(y))
$$
 by Young inequality we conclude that
$$
\dint_{\O}\dint_{\O}\dfrac{(u_n(x)-u_n(y))^2}{|x-y|^{N+2s}}\dfrac{dxdy}{|x|^\g|y|^\g}\le
\dint_{\O}\dint_{\O}\dfrac{(u(x)-u(y))^2}{|x-y|^{N+2s}}\dfrac{dxdy}{|x|^\g|y|^\g}.
$$
Therefore  $\|u_n\|_{Y^{s,\g}_0(\O)}\le
\|u\|_{Y^{s,\g}_0(\O)}$ and hence $u_n\to u$ strongly in
$Y^{s,\g}_0(\O)$.
\end{proof}
Likewise, we have  the following local version of Lemma \ref{compa}.
\begin{Lemma}\label{compag}
Let $\{u_n\}_{n\in\N}\subset Y^{s,\g}_0(\O)$ be an increasing sequence of nonnegative
 functions such that $L_{\g,\O} (u_n)\ge 0$. Assume that
$\{u_n\}_{n\in\N}$ is uniformly bounded in $Y^{s,\g}_{loc}(\O)$, then
there exists $u\in Y^{s,\g}_{loc}(\O)$ such that $u_n\to u$
strongly in $Y^{s,\g}_{loc}(\Omega)$.
\end{Lemma}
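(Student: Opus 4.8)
The plan is as follows. Since $\{u_n\}$ is increasing and nonnegative it converges pointwise a.e.\ in $\O$ to some measurable $u:\O\to[0,\infty]$. Fix $\O_1\subset\subset\O$. By hypothesis $\sup_n\big(\int_{\O_1}u_n^2\,d\mu+\iint_{\O_1\times\O_1}(u_n(x)-u_n(y))^2\,d\nu\big)<\infty$, so Fatou's lemma gives $\int_{\O_1}u^2\,d\mu<\infty$ and $\iint_{\O_1\times\O_1}(u(x)-u(y))^2\,d\nu<\infty$; hence $u$ is finite a.e.\ and $u\in Y^{s,\g}_{loc}(\O)$. Writing $w_n:=u-u_n$ we have $0\le w_n\le u$ and $w_n\to0$ a.e., so by dominated convergence $\int_{\O_1}w_n^2\,d\mu\to0$ for every $\O_1\subset\subset\O$. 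Thus it remains only to show $\iint_{\O_1\times\O_1}(w_n(x)-w_n(y))^2\,d\nu\to0$.

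To this end I fix $\O_1\subset\subset\O_2\subset\subset\O$ and a cut-off $\phi\in\mathcal{C}^\infty_0(\O_2)$ with $0\le\phi\le1$ and $\phi\equiv1$ on $\O_1$. The function $\varphi_n:=w_n\phi^2=(u-u_n)\phi^2$ is nonnegative, supported in $\O_2$, and belongs to $Y^{s,\g}_0(\O)$ (by the usual product estimate, $\phi$ having compact support in $\O$). Hence $\langle L_{\g,\O}(u_n),\varphi_n\rangle\ge0$, which, writing $u_n=u-w_n$, reads
\[
\langle L_{\g,\O}(w_n),w_n\phi^2\rangle\le\langle L_{\g,\O}(u),w_n\phi^2\rangle .
\]
On the other hand, the pointwise inequality $\tfrac12(a\alpha-b\beta)^2\le(a-b)(a\alpha^2-b\beta^2)+\max\{|a|,|b|\}^2(\alpha-\beta)^2$, valid for all $a,b\in\R$, $\alpha,\beta\ge0$, applied with $a=w_n(x),\,b=w_n(y),\,\alpha=\phi(x),\,\beta=\phi(y)$ and integrated in $d\nu$ over $\O\times\O$, together with the fact that the left-hand integrand is nonnegative and that $\phi\equiv1$ on $\O_1$, yields
\[
\tfrac12\iint_{\O_1\times\O_1}(w_n(x)-w_n(y))^2\,d\nu\le\langle L_{\g,\O}(w_n),w_n\phi^2\rangle+\iint_{\O\times\O}\max\{w_n(x),w_n(y)\}^2(\phi(x)-\phi(y))^2\,d\nu .
\]

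It then suffices to check that both terms on the right tend to zero. For the first one, $\{w_n\phi^2\}$ is bounded in $Y^{s,\g}_0(\O_2)$ and $w_n\phi^2\to0$ in $L^2(\O_2,d\mu)$, hence $w_n\phi^2\rightharpoonup0$ weakly in $Y^{s,\g}_0(\O_2)$, so $\langle L_{\g,\O}(u),w_n\phi^2\rangle\to0$. For the second one I split the integral over $\{|x-y|\le1\}$ and $\{|x-y|>1\}$, using $(\phi(x)-\phi(y))^2\le\|\nabla\phi\|_\infty^2|x-y|^2$ and $\int_{|x-y|\le1}|x-y|^{2-N-2s}\,dy<\infty$ (valid since $s<1$) together with the local integrability of $|x|^{-\g}$ (valid since $\g<N$) on the first region, and the boundedness of the kernel on the second; in both cases one is reduced to a constant times $\int_{\O'}w_n^2\,d\mu$ for a fixed $\O'\subset\subset\O$ containing a neighbourhood of $\operatorname{supp}\phi$, which goes to zero. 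Combining this with $\int_{\O_1}w_n^2\,d\mu\to0$ gives $u_n\to u$ strongly in $Y^{s,\g}(\O_1)$, and since $\O_1\subset\subset\O$ was arbitrary, $u_n\to u$ strongly in $Y^{s,\g}_{loc}(\O)$.

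The delicate point — and the only real difference with the proof of Lemma~\ref{compa}, where $u_n-u$ is itself an admissible global test function — is the handling of the two ``commutator'' terms produced by localizing the global monotonicity $L_{\g,\O}(u_n)\ge0$ with $\phi$: one must control the nonlocal interaction between $\operatorname{supp}\phi$ and $\O\setminus\O_2$, in particular its part near $\partial\O$. This is where one needs, besides the strong $L^2_{loc}$--convergence $w_n\to0$ and the local integrability of the weighted singular kernel, the mild global integrability $u\in L^1(\O,|x|^{-\g}dx)$ (equivalently $\sup_n\int_\O u_n\,|x|^{-\g}dx<\infty$), which in the situations where the lemma is invoked is guaranteed by the good boundary behaviour of the limit; keeping every quantity on compact subsets of $\O$ via the nesting $\O_1\subset\subset\O_2\subset\subset\O$ is precisely what makes the uniform local bound do the rest.
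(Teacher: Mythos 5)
Your Caccioppoli-style argument is a genuinely different organisation from the paper's, which multiplies $u_n$ by a cut-off $\psi$ and invokes Lemma \ref{compa} after appealing to a product-rule lemma of \cite{dine}; your direct use of the identity $(a\alpha-b\beta)^2=(a-b)(a\alpha^2-b\beta^2)+ab(\alpha-\beta)^2$ together with testing $L_{\g,\O}(u_n)\ge0$ against $w_n\phi^2$ is the natural way to flesh out that terse reduction. But there is a genuine gap in the last step, which you flag in your final paragraph without closing.

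Concretely, none of the quantities you produce is controlled by the hypotheses of the lemma. After symmetrising, your commutator term is bounded by $2\iint_{\O\times\O}w_n^2(x)(\phi(x)-\phi(y))^2\,d\nu$, and the contribution from $\{x\in\O\setminus\O_2,\ y\in\operatorname{supp}\phi\}$ (where $\phi(x)=0$, $\phi(y)\neq0$, $|x-y|$ bounded below) reduces to a constant times $\int_{\O\setminus\O_2}w_n^2(x)\,|x|^{-\g}dx$, which is emphatically not ``a constant times $\int_{\O'}w_n^2\,d\mu$ for a fixed $\O'\subset\subset\O$'' as you assert. The same issue makes the split $\langle L_{\g,\O}(u_n),w_n\phi^2\rangle=\langle L_{\g,\O}(u),w_n\phi^2\rangle-\langle L_{\g,\O}(w_n),w_n\phi^2\rangle$ illegitimate a priori (possibly $\infty-\infty$), and undermines the claim that $w_n\phi^2\rightharpoonup0$ in $Y^{s,\g}_0(\O_2)$ forces $\langle L_{\g,\O}(u),w_n\phi^2\rangle\to0$: for that functional to be continuous on $Y^{s,\g}_0(\O_2)$ one again needs integrability of $u$ on $\O\setminus\O_2$. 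Uniform boundedness in $Y^{s,\g}_{loc}(\O)$ gives no control whatsoever on $\int_\O u^2\,d\mu$ or $\int_\O u\,|x|^{-\g}dx$; an increasing sequence can blow up near $\partial\O$ while remaining locally bounded. You honestly name the missing ingredient (``$u\in L^1(\O,|x|^{-\g}dx)$'') but then wave it away as ``guaranteed in the situations where the lemma is invoked''. That proves a different, weaker statement with an extra hypothesis; it does not prove the lemma as stated. To make the proof rigorous you must either add that global-integrability hypothesis (and accordingly restate the lemma) or show that the far-field commutator contribution is controlled by the local bound alone, which your current manipulations do not do. Replacing $u$ by $u_m$ (working with $w_{n,m}=u_m-u_n\in Y^{s,\g}_0(\O)$, then $m\to\infty$ by Fatou) makes every pairing well-defined, but it does not remove the need for $\sup_m\int_\O u_m^2\,d\mu<\infty$, so the gap persists in that variant as well.
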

\begin{proof}
By using a straightforward modification of Lemma 5.3 in
\cite{dine}, and multiplying the sequence
$\{u_n\}_{n\in\mathbb{N}}$ by a Lipschitz cut-off function $\psi$
such that $\psi\equiv 1$ on $\Omega'\subset\Omega$, we can apply
Lemma \ref{compa} to conclude.
\end{proof}
\begin{remark}
Lemma \ref{compar}, Lemma \ref{compa} and Lemma \ref{compag} also hold for $\gamma=0$, i.e., for the spaces $H_0^s(\Omega)$ and $H^s_{loc}(\Omega)$.
\end{remark}
We will also  consider nonvariational data, i.e., in $L^1(\Omega)$. In this case, we will use the classical truncating procedure to get
\textit{a priori} estimates. Recall that for any $k\geq 0$, $T_k (\sigma) $ and $G_k (\sigma)$,
$\sigma\in \mathbb{R}^+$ are defined by
\begin{equation}\label{Tk}
T_k (\sigma) := {\min\{ k, \sigma \}} \quad \mbox{ and } \quad
  G_k (\sigma) := \sigma- T_k (\sigma)\,.
\end{equation}
\begin{Proposition}\label{prop}
Assume that $v\in H_0^s  (\Omega)$:
\begin{itemize}
\item[i)] if $\psi \in \lip (\mathbb{R})  $ is such that
$\psi(0)=0$, then $\psi (v) \in H_0^s  (\Omega)$. In particular,
for any $k\geq 0$, $T_k  (v)$, $ G_k (v)$ $\in H_0^s  (\Omega)$\,;
\item[ii)] for any $k \geq 0 $,
\begin{equation} \label{g_k}
   \,\| G_k  (v) \|^2_{H_0^s  (\Omega)}  \leq \int_\Omega G_k (v)\  (-\Delta)^s v\,dx\,;
\end{equation}
\item[iii)] for any $k \geq 0 $,
\begin{equation} \label{t_k}
    \, \| T_k  (v) \|^2_{H_0^s  (\Omega )} \leq  \int_\Omega T_k (v) \ (-\Delta)^s v\,dx.
\end{equation}
\end{itemize}
\end{Proposition}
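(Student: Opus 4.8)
The plan is to deduce (i) from a composition-and-density argument and then obtain (ii) and (iii) from an elementary pointwise inequality. For (i), I would fix a sequence $v_n\in\mathcal{C}^\infty_0(\Omega)$ with $v_n\to v$ in $H^s(\R^N)$ and, along a subsequence, $v_n\to v$ a.e.\ in $\R^N$. Since $\psi$ is Lipschitz with $\psi(0)=0$, each $\psi(v_n)$ is Lipschitz with $\operatorname{supp}\psi(v_n)\subset\operatorname{supp}v_n\subset\subset\Omega$, so $\psi(v_n)\in H^s_0(\Omega)$: being Lipschitz with compact support in $\Omega$ it lies in $H^1(\R^N)$, and its mollifications lie in $\mathcal{C}^\infty_0(\Omega)$ for small radius and converge to it in $H^s(\R^N)$. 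It then suffices to prove $\psi(v_n)\to\psi(v)$ in $H^s(\R^N)$, since $H^s_0(\Omega)$ is closed. Writing $L$ for the Lipschitz constant of $\psi$, the $L^2$ convergence is immediate from $|\psi(v_n)-\psi(v)|\le L|v_n-v|$, while for the Gagliardo seminorm I would combine $|\psi(v_n)(x)-\psi(v_n)(y)|\le L|v_n(x)-v_n(y)|$ with the fact that
\[
\frac{v_n(x)-v_n(y)}{|x-y|^{\frac{N+2s}{2}}}\longrightarrow\frac{v(x)-v(y)}{|x-y|^{\frac{N+2s}{2}}}\qquad\text{in }L^2(\R^{2N})
\]
and apply a generalized dominated convergence theorem (the integrands converge a.e.\ and are dominated by an $L^2$-convergent sequence). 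This gives $\psi(v)\in H^s_0(\Omega)$; applying it to $\psi=T_k$ and $\psi=G_k$, extended to $\R$ so as to be Lipschitz and vanish at the origin, yields the stated particular cases, and one also has $G_k(v)=v-T_k(v)$.

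For (ii) and (iii) the core is the pointwise inequality: for all $a,b\in\R$ and $k\ge0$,
\[
(a-b)\bigl(G_k(a)-G_k(b)\bigr)\ \ge\ \bigl(G_k(a)-G_k(b)\bigr)^2,\qquad (a-b)\bigl(T_k(a)-T_k(b)\bigr)\ \ge\ \bigl(T_k(a)-T_k(b)\bigr)^2.
\]
Both follow from $a-b=(T_k(a)-T_k(b))+(G_k(a)-G_k(b))$ together with the fact that $T_k$ and $G_k$ are nondecreasing and $1$-Lipschitz, so $T_k(a)-T_k(b)$ and $G_k(a)-G_k(b)$ carry the sign of $a-b$ and their product is nonnegative. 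Taking $a=v(x)$, $b=v(y)$, multiplying by $|x-y|^{-(N+2s)}$ and integrating over $D_\Omega$—which is legitimate because, by (i), $G_k(v),T_k(v)\in H^s_0(\Omega)$, so the bilinear form representing $(-\Delta)^s$ applies—gives
\[
\|G_k(v)\|^2_{H^s_0(\Omega)}\ \le\ \langle(-\Delta)^s v,G_k(v)\rangle\ =\ \int_\Omega G_k(v)\,(-\Delta)^s v\,dx,
\]
i.e.\ \eqref{g_k}, and \eqref{t_k} follows identically from the second inequality with $T_k$ in place of $G_k$.

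The delicate point is the last step of (i): showing that $\psi(v)$ actually lies in the closed subspace $H^s_0(\Omega)$ and not merely in $H^s(\R^N)$ with $\psi(v)=0$ outside $\Omega$. This is exactly where the approximation by $\mathcal{C}^\infty_0(\Omega)$-functions and the generalized dominated convergence argument for the Gagliardo seminorm are needed (alternatively one could invoke the identification $H^s_0(\Omega)=\{u\in H^s(\R^N):u=0$ a.e.\ in $\R^N\setminus\Omega\}$, valid for Lipschitz $\Omega$). Granting that, the pointwise inequalities underpinning (ii) and (iii) and their integration are entirely routine.
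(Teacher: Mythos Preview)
Your proposal is correct and matches the approach the paper alludes to. The paper does not give its own argument here but refers to \cite{LPPS} and explicitly notes that ``the proof relies in a punctual inequality''; this is precisely your decomposition $a-b=(T_k(a)-T_k(b))+(G_k(a)-G_k(b))$ together with the nonnegativity of the cross term $(T_k(a)-T_k(b))(G_k(a)-G_k(b))$ (which the paper later invokes directly in the proof of Theorem~\ref{singular}). Your density-plus-generalized-dominated-convergence argument for (i) is the standard way to obtain stability of $H^s_0(\Omega)$ under Lipschitz compositions fixing the origin, and the subsequence passage suffices since membership in the closed subspace $H^s_0(\Omega)$ is the only conclusion needed.
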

A detailed proof of this result can be seen in \cite{LPPS}. Since the proof relies in a punctual inequality a similar result holds for the \textit{weighted operator.}

The next elementary algebraic inequality  will be used in some arguments. See \cite{Kas1} and \cite{AAB}.

For the reader convenience we give  a complete proof here.
\begin{Lemma}\label{alg}
Let $s_1,s_2\ge 0$ and $a>0$. Then
\begin{equation}\label{alg0}
(s_1-s_2)(s^a_1-s^a_2)\ge
\frac{4a}{(a+1)^2}(s^{\frac{a+1}{2}}_1-s^{\frac{a+1}{2}}_2)^2.
\end{equation}
\end{Lemma}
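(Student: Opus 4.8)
The plan is to reduce \eqref{alg0} to a single application of the Cauchy-Schwarz inequality, after writing both sides in terms of integrals of power functions. Since \eqref{alg0} is symmetric in $s_1$ and $s_2$ and reduces to the trivial equality $0=0$ when $s_1=s_2$, I may assume without loss of generality that $s_1>s_2\ge 0$.

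The starting point is the elementary identity
$$
s_1^{\frac{a+1}{2}}-s_2^{\frac{a+1}{2}}=\frac{a+1}{2}\int_{s_2}^{s_1}x^{\frac{a-1}{2}}\,dx,
$$
which is valid for every $a>0$ because then $\frac{a-1}{2}>-1$, so the integrand is integrable on $[s_2,s_1]$ even in the degenerate case $s_2=0$. Next I would apply the Cauchy-Schwarz inequality to the functions $x^{\frac{a-1}{2}}$ and $1$ on the interval $[s_2,s_1]$:
$$
\Big(\int_{s_2}^{s_1}x^{\frac{a-1}{2}}\,dx\Big)^2\le\Big(\int_{s_2}^{s_1}x^{a-1}\,dx\Big)\Big(\int_{s_2}^{s_1}dx\Big)=\frac{s_1^a-s_2^a}{a}\,(s_1-s_2),
$$
where the identity $\int_{s_2}^{s_1}x^{a-1}\,dx=\frac{s_1^a-s_2^a}{a}$ again needs only $a>0$ for integrability at the origin. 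Combining the two displays yields
$$
\Big(s_1^{\frac{a+1}{2}}-s_2^{\frac{a+1}{2}}\Big)^2\le\frac{(a+1)^2}{4}\cdot\frac{(s_1-s_2)(s_1^a-s_2^a)}{a},
$$
which is precisely \eqref{alg0} after multiplying both sides by $\frac{4a}{(a+1)^2}$.

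There is essentially no obstacle in this argument; the only point deserving a word of care is the integrability of $x^{a-1}$ and $x^{\frac{a-1}{2}}$ at $x=0$ in the degenerate case $s_2=0$, which is guaranteed exactly by the hypothesis $a>0$. An alternative, integral-free route is to divide \eqref{alg0} by $s_2^{a+1}$ when $s_2>0$, set $t=s_1/s_2>1$, and prove $(t-1)(t^a-1)\ge\frac{4a}{(a+1)^2}\big(t^{\frac{a+1}{2}}-1\big)^2$ by the same Cauchy-Schwarz estimate on $[1,t]$, handling $s_2=0$ as the trivial case where \eqref{alg0} reads $s_1^{a+1}\ge\frac{4a}{(a+1)^2}s_1^{a+1}$ and follows from $(a+1)^2\ge 4a$; but the integral formulation above treats all cases at once.
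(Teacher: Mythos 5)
Your proof is correct. The identity
$$
s_1^{\frac{a+1}{2}}-s_2^{\frac{a+1}{2}}=\frac{a+1}{2}\int_{s_2}^{s_1}x^{\frac{a-1}{2}}\,dx
$$
is the fundamental theorem of calculus, the Cauchy--Schwarz step is applied correctly, and the integrals $\int_{s_2}^{s_1}x^{a-1}dx=\frac{s_1^a-s_2^a}{a}$ and $\int_{s_2}^{s_1}dx=s_1-s_2$ converge for all $a>0$ even when $s_2=0$, as you observe. Rearranging indeed gives \eqref{alg0}.

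The paper's own proof takes a different route. It normalizes by setting $x:=s_2/s_1\in(0,1)$, so that \eqref{alg0} becomes $h(x)\ge 0$ with
$h(x)=(1-x)(1-x^a)(a+1)^2-4a(1-x^{\frac{a+1}{2}})^2$, rewrites this as a difference of two squares,
$h(x)=(a-1)^2(1-x^{\frac{a+1}{2}})^2-(a+1)^2(x^{\frac12}-x^{\frac a2})^2$,
and then shows (separately for $a>1$ and $a<1$) that the first factor dominates the second by differentiating an auxiliary function $h_1$ and invoking Young's inequality to prove $h_1'\le 0$ on $(0,1)$. Your argument is shorter and more conceptual: it replaces the case split and the derivative computation with a single application of Cauchy--Schwarz to $x^{\frac{a-1}{2}}\cdot 1$ on $[s_2,s_1]$. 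It also makes the equality case transparent: equality in Cauchy--Schwarz forces $x^{\frac{a-1}{2}}$ to be constant on $[s_2,s_1]$, hence $a=1$ or $s_1=s_2$, which is exactly where \eqref{alg0} is an identity. The paper's elementary approach avoids integrals altogether, which may be preferable in contexts where one wants a purely algebraic manipulation, but for this statement your route is the cleaner of the two.
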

\begin{proof} Since $\dfrac{4a}{(a+1)^2}\le 1$ for $0\le a$, if  $s_1=0$ or $s_2=0$ the inequality trivially follows. Hence, we can assume $s_1>s_2>0$. Thus, setting
$x:=\dfrac{s_2}{s_1}$, \eqref{alg0} is equivalent to
\begin{equation}\label{alg00}
(1-x)(1-x^a)\ge \frac{4a}{(a+1)^2}(1-x^{\frac{a+1}{2}})^2\mbox{
for all  }x\in (0,1).
\end{equation}
We set
$$
h(x):=(1-x)(1-x^a)(a+1)^2-4a(1-x^{\frac{a+1}{2}})^2,
$$
and then we just have to show that $h(x)\ge 0$ for all $x\in (0,1)$. Moreover, $h$ can be written as
$$
h(x)=(a-1)^2(1-x^{\frac{a+1}{2}})^2-(a+1)^2(x^{\frac
12}-x^{\frac{a}{2}})^2.
$$

First, we assume $a>1$. We claim that
$$
(a-1)(1-x^{\frac{a+1}{2}})\ge (a+1)(x^{\frac 12}-x^{\frac{a}{2}}).
$$
In fact, let us define
$$
h_1(x):=(a-1)(1-x^{\frac{a+1}{2}})-(a+1)(x^{\frac
12}-x^{\frac{a}{2}}),
$$
so that
$h'_1(x)=\frac{(a+1)}{2}\Big(-(a-1)x^{\frac{a-1}{2}}-x^{-\frac{1}{2}}+ax^{\frac{a-2}{2}}\Big)$.

Using Young inequality, we obtain that
$$
x^{\frac{a}{2}-1}\le \frac{a-1}{a}x^{\frac{a-1}{2}}+
\frac{1}{a}x^{-\frac{1}{2}}.
$$
Thus $h'_1(x)\le 0$ and hence $h_1(x)\ge h_1(1)=0$. Therefore
$h(x)\ge 0$ and the result follows in this case.

Consider now the case $a<1$. We prove the result if we show
$$
(1-a)(1-x^{\frac{a+1}{2}})\ge (a+1)(x^{\frac{a}{2}}-x^{\frac 12}).
$$
Defining
$$
h_2(x):=(1-a)(1-x^{\frac{a+1}{2}})-(a+1)(x^{\frac{a}{2}}-x^{\frac
12})
$$
and using again Young inequality we obtain that $h'_2(x)\le 0$
for all $x\in (0,1)$. Thus $h_2(x)\ge h_1(1)=0$. Hence $h(x)\ge 0$
and we conclude.
\end{proof}
Finally we state  the following classical numerical iteration result proved in \cite{St}) and  that we will use later for some
boundedness results.
\begin{Lemma}\label{st}\sl
Let $\psi: \R^{+} \to \R^{+}$ be a nonincreasing function such
that
$$
\psi(h) \leq \frac{M\,\psi(k)^{\delta}}{(h-k)^{\gamma}}, \quad
\forall h > k > 0,
$$
where $M > 0$, $\delta > 1$ and $\gamma > 0$. Then $\psi(d) = 0$,
where $ d^{\gamma} = M\,\psi(0)^{\delta -
1}\,2^{\frac{\delta\gamma}{\delta -1}}. $
\end{Lemma}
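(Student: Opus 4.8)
The plan is to run the classical geometric iteration of Stampacchia. Fix the value $d$ prescribed by $d^{\gamma} = M\,\psi(0)^{\delta-1}\,2^{\delta\gamma/(\delta-1)}$ and introduce the increasing sequence $k_n := d\,(1-2^{-n})$, $n\ge 0$, so that $k_0=0$, $k_n\uparrow d$ and $k_{n+1}-k_n = d\,2^{-(n+1)}$. Applying the hypothesis with $h=k_{n+1}$ and $k=k_n$ yields the recursive estimate
$$
\psi(k_{n+1})\le \frac{M\,2^{(n+1)\gamma}}{d^{\gamma}}\,\psi(k_n)^{\delta},\qquad n\ge 0 .
$$
The hypothesis is stated for $k>0$, while $k_0=0$; but since $\psi$ is nonincreasing one has $\lim_{k\to 0^+}\psi(k)\le\psi(0)$, so letting $k\to 0^+$ shows that the $n=0$ instance of the recursion remains valid with $\psi(0)$ in place of $\psi(k_0)$, which is all that is needed.

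Next I would prove by induction that $\psi(k_n)\le \psi(0)\,2^{-\mu n}$ with $\mu:=\gamma/(\delta-1)>0$. The base case $n=0$ is immediate. For the inductive step one inserts $\psi(k_n)\le\psi(0)\,2^{-\mu n}$ into the recursion and uses that the specific choice of $d$ makes $M\,\psi(0)^{\delta}/d^{\gamma}=\psi(0)\,2^{-\delta\mu}$; a short computation of the exponents (relying on the identity $(n+1)(\delta-1)-\delta n-\delta=-(n+1)$) then produces exactly $\psi(k_{n+1})\le\psi(0)\,2^{-\mu(n+1)}$. This algebraic bookkeeping --- verifying that $\mu=\gamma/(\delta-1)$ is the unique exponent for which the recursion is self-sustaining, and that the prescribed value of $d$ is precisely what lets it start --- is the only point that requires any care; everything else is routine.

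Finally, since $\delta>1$ we have $\mu>0$, hence $\psi(k_n)\to 0$ as $n\to\infty$. Because $\psi$ is nonincreasing and $k_n\le d$ for every $n$, it follows that $0\le\psi(d)\le\psi(k_n)$, and letting $n\to\infty$ forces $\psi(d)=0$, which is the assertion. I do not anticipate any genuine obstacle here: the lemma is classical and the argument is entirely elementary, the delicate part being merely the exponent computation that pins down the stated formula for $d$.
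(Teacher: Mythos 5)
The paper does not actually prove this lemma --- it is stated with a citation to Stampacchia \cite{St} and no argument is given. Your proof is the standard geometric-iteration argument and is correct: the choice $k_n=d(1-2^{-n})$, the induction ansatz $\psi(k_n)\le\psi(0)\,2^{-\mu n}$ with $\mu=\gamma/(\delta-1)$, and the verification that the prescribed value of $d$ makes the induction close (via $M\psi(0)^{\delta}/d^{\gamma}=\psi(0)2^{-\delta\mu}$ and the exponent identity $(n+1)(\delta-1)-\delta n-\delta=-(n+1)$) are all exactly right, and your remark disposing of the $k_0=0$ endpoint --- where the hypothesis is only stated for $k>0$, but monotonicity lets one pass to the limit --- is a legitimate and necessary touch.
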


\section{Weak Harnack inequality and local behavior of nonnegative supersolutions.}\label{HH}
Consider the homogeneous equation
\begin{equation*}\label{problemHardy}
(-\Delta)^{s} u-\l\dfrac{\,u}{|x|^{2s}}=0\mbox{ in } \mathbb{R}^N.
\end{equation*}
First, the following result holds (see \cite{BMP, F}).
\begin{Lemma}\label{singularity} Let $0<\lambda\leq \Lambda_{N,s}$. Then $u_{\pm\alpha}:=|x|^{-\frac{N-2s}{2}\pm\alpha}$
solves
\begin{equation*}\label{homogeneous}
(-\Delta)^s u= \lambda\frac{u}{|x|^{2s}}\inn
\ren\setminus{\{0\}},
\end{equation*}
where $\alpha$ is given  by the identity
\begin{equation}\label{lambda}
\lambda=\lambda(\alpha)=\lambda(-\alpha)=\dfrac{2^{2s}\,\Gamma(\frac{N+2s+2\alpha}{4})\Gamma(\frac{N+2s-2\alpha}{4})}{\Gamma(\frac{N-2s+2\alpha}{4})\Gamma(\frac{N-2s-2\alpha}{4})}.
\end{equation}
Moreover $\lambda(\alpha)$ is a positive decreasing continuous
function from $\left[0,\dfrac{N-2s}{2}\right)$ to $(0, \Lambda_{N_s}]$.
\end{Lemma}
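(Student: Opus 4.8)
The plan is to reduce the whole statement to a single pointwise identity, namely the computation of $(-\Delta)^s$ applied to a pure power $|x|^{-\gamma}$, and then to study the resulting constant as a function of $\alpha$. First I would observe that for $0<\gamma<N$ and $x\neq 0$ the principal value integral \eqref{operator} defining $(-\Delta)^s(|x|^{-\gamma})(x)$ converges absolutely: the singularity at $y=0$ is integrable since $\gamma<N$, the decay at infinity is harmless, and the local singularity at $y=x$ is absorbed by the principal value. Since $(-\Delta)^s$ commutes with rotations and satisfies the scaling relation $(-\Delta)^s\big(u(\mu\,\cdot)\big)=\mu^{2s}\big((-\Delta)^su\big)(\mu\,\cdot)$, the function $(-\Delta)^s(|x|^{-\gamma})$ is radial and homogeneous of degree $-\gamma-2s$, hence
\[
(-\Delta)^s\big(|x|^{-\gamma}\big)=C(N,s,\gamma)\,|x|^{-\gamma-2s}\qquad\text{in }\ren\setminus\{0\},
\]
with $C(N,s,\gamma)=a_{N,s}\,\text{P.V.}\!\int_{\ren}\frac{1-|y|^{-\gamma}}{|e_1-y|^{N+2s}}\,dy$.

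The evaluation of this constant is the only genuinely technical point, and the one I would expect to be the main obstacle. I would carry it out either directly, reducing the radial integral to one–dimensional integrals handled by Beta/Gamma identities (the ``involved computations on nonlocal radial integrals'' announced in the introduction), or, more transparently, by recalling the Fourier transform of Riesz kernels, $\widehat{|x|^{-\beta}}(\xi)=c_N(\beta)\,|\xi|^{\beta-N}$ with $c_N(\beta)=2^{N-\beta}\pi^{N/2}\,\Gamma(\tfrac{N-\beta}{2})/\Gamma(\tfrac{\beta}{2})$ for $0<\beta<N$, and then multiplying by the symbol $|\xi|^{2s}$, a legitimate tempered–distribution manipulation as soon as in addition $\gamma+2s<N$. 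Either way one obtains
\[
C(N,s,\gamma)=\frac{c_N(\gamma)}{c_N(\gamma+2s)}=2^{2s}\,\frac{\Gamma(\tfrac{\gamma+2s}{2})\,\Gamma(\tfrac{N-\gamma}{2})}{\Gamma(\tfrac{\gamma}{2})\,\Gamma(\tfrac{N-\gamma-2s}{2})}=\Lambda_{N,s}+\Phi_{N,s}(\gamma),
\]
the last expression being exactly the constant occurring in Lemma \ref{GS}, which is a reassuring consistency check. I would then specialize to $\gamma=\gamma_{\mp}:=\tfrac{N-2s}{2}\mp\alpha$, noting that $\gamma_{\mp}\in(0,N-2s)$ for all $\alpha\in[0,\tfrac{N-2s}{2})$, so that $u_{\pm\alpha}=|x|^{-\gamma_{\mp}}$ solves $(-\Delta)^su=C(N,s,\gamma_{\mp})\,u/|x|^{2s}$ in $\ren\setminus\{0\}$; a short rearrangement of the four Gamma arguments shows that $C(N,s,\gamma_{\mp})$ coincides with the right–hand side of \eqref{lambda}, which is visibly invariant under $\alpha\mapsto-\alpha$, so $u_{+\alpha}$ and $u_{-\alpha}$ both solve the equation with the same value $\lambda=\lambda(\alpha)$.

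Finally, to extract the qualitative properties of $\lambda(\cdot)$ I would set $p=\tfrac{N+2s}{4}>q=\tfrac{N-2s}{4}>0$ and $t=\tfrac{\alpha}{2}\in[0,q)$, so that $\lambda(\alpha)=2^{2s}\,\Gamma(p+t)\Gamma(p-t)/\big(\Gamma(q+t)\Gamma(q-t)\big)$ is manifestly continuous and positive. Differentiating its logarithm and using $p-q=s$ gives $\tfrac{d}{dt}\log\lambda(\alpha)=\int_{q-t}^{q+t}\big(\psi'(r+s)-\psi'(r)\big)\,dr$, where $\psi=(\log\Gamma)'$; since $\psi'(x)=\sum_{k\ge 0}(x+k)^{-2}$ is strictly decreasing and $s>0$, the integrand is negative, so $\lambda$ is strictly decreasing on $[0,\tfrac{N-2s}{2})$. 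For the range, $\lambda(0)=2^{2s}\Gamma(p)^2/\Gamma(q)^2=\Lambda_{N,s}$ by \eqref{cteHardy}, while as $\alpha\to(\tfrac{N-2s}{2})^-$ one has $t\to q^-$ and $\Gamma(q-t)\to+\infty$ with all remaining factors bounded and positive, whence $\lambda(\alpha)\to0^+$; by continuity and strict monotonicity $\lambda$ is therefore a decreasing bijection of $[0,\tfrac{N-2s}{2})$ onto $(0,\Lambda_{N,s}]$. In short, the heart of the proof is the explicit constant in the first step, and everything that follows is elementary.
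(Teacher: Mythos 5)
Your proof is correct and complete. Note that the paper itself does not prove Lemma \ref{singularity}; it simply cites \cite{BMP, F}, so there is no internal argument to compare against. Your route is the canonical one used in those references and elsewhere: by rotational invariance and $2s$-homogeneity, $(-\Delta)^s|x|^{-\gamma}=C(N,s,\gamma)|x|^{-\gamma-2s}$ in $\ren\setminus\{0\}$ for $0<\gamma<N-2s$; the constant is evaluated via the Riesz-kernel Fourier pair $\widehat{|x|^{-\beta}}=c_N(\beta)|\xi|^{\beta-N}$, giving $C(N,s,\gamma)=c_N(\gamma)/c_N(\gamma+2s)=\Lambda_{N,s}+\Phi_{N,s}(\gamma)$, consistently with Lemma \ref{GS}; specializing $\gamma=\tfrac{N-2s}{2}\mp\alpha$ produces exactly \eqref{lambda} and exhibits the symmetry $\lambda(\alpha)=\lambda(-\alpha)$. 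The monotonicity argument via $\tfrac{d}{dt}\log\lambda=\int_{q-t}^{q+t}\bigl(\psi'(r+s)-\psi'(r)\bigr)\,dr<0$, together with $\lambda(0)=\Lambda_{N,s}$ and $\lambda\to0^+$ as $\alpha\to(\tfrac{N-2s}{2})^-$ (since $\Gamma(q-t)\to+\infty$), gives the stated decreasing bijection onto $(0,\Lambda_{N,s}]$. All of this is sound; the only point you should make fully explicit if you expand this into a formal write-up is the distributional justification of the symbol multiplication, or alternatively carry out the direct Beta-function evaluation of the P.V. integral, which avoids the distribution-theoretic caveat entirely.
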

\begin{remark}
Notice that $\lambda(\alpha)=
\lambda(-\alpha)=m_{\alpha}m_{-\alpha}$, with $m_{\alpha}:=
2^{\alpha+s}\dfrac{\Gamma(\frac{N+2s+2\alpha}{4})}{\Gamma(\frac{N-2s-2\alpha}{4})}$.
\end{remark}
\noindent Denote
\begin{equation}\label{g1}
\gamma:= \dfrac{N-2s}{2}-\alpha \hbox{ and } \bar\gamma:=
\dfrac{N-2s}{2}+\alpha,
\end{equation}
with $0<\gamma\leq \dfrac{N-2s}{2}\leq\bar\gamma<(N-2s)$. Since
$N-2\gamma-2s={2}\alpha>0$ and $N-2\bar\gamma-2s=-{2}\alpha<0$,
then $|x|^{-\gamma}$ is the unique energy solution of these ones
such that  $(-\Delta)^{s/2}(|x|^{-\g})\in L^2_{loc}(\R^N)$.

 Let $u$ be the energy solution to problem \eqref{prob} with $0<\lambda< \Lambda_{N,s}$. By setting $v(x):=|x|^{\g} u(x)$, where $\g$ is defined in
\eqref{g1}, it follows that $v$ solves
\begin{equation}\label{peso1}
\left\{
\begin{array}{rclc}
L_\g v(x) &= &|x|^{-\g}f(x) &\mbox{ in } \Omega,\\
v&=&0&\inn \ren\setminus\Omega,
\end{array}
\right.
\end{equation}
with  $0<\g<\frac{N-2s}{2}$ and $L_\gamma$ defined in \eqref{Ltilde0}. Hence, to study the behavior of
$u$ near the origin, we may deal with the same question for
$v$. More precisely,  we want to prove that the weighted operator $L_\g v$
satisfies a suitable weak Harnack inequality. Notice that the
natural functional framework for the new equation of $v$ is the
space $Y^{s,\g}(\R^N)$ defined in Section \ref{prim}.

The statement of the result is the
following.
\begin{Theorem}{\it(Weak Harnack inequality)}\label{harnack}\\
Let $r>0$ such that $B_{2r}\subset\Omega$. Assume that $f\ge 0$ and let $v\in Y^{s,\gamma}(\ren)$ be  a
supersolution to \eqref{peso1} with $v\gneqq 0$ in $\ren $. Then,
for every $q<\frac{N}{N-2s}$ there exists a positive constant $C=C(N,s,\g)$ such that
\begin{equation*}\label{main}
\Big(\int_{B_r}v^qd\mu\Big)^{\frac 1q}\le C
\inf_{B_{\frac{3}{2}r}}v.
\end{equation*}
\end{Theorem}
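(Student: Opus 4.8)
\enspace
The plan is to run a Moser iteration tailored to the weighted nonlocal operator $L_\g$ and to connect small positive and small negative exponents through a logarithmic (John--Nirenberg type) estimate. Since $f\ge 0$, the datum $g:=|x|^{-\g}f$ is nonnegative, so by Definition \ref{super00} the function $v$ is a nonnegative supersolution of $L_\g v\ge 0$ on every $\O_1\subset\subset\O$, and the right-hand side plays no further role. Fix $B_{2r}\subset\O$ and work only with concentric balls $B_\rho\subset\O$; for such a ball write $\langle g\rangle_\rho:=\tfrac1{\mu(B_\rho)}\int_{B_\rho}g\,d\mu$ for the $d\mu$-average. The structural fact that makes everything go through is that $d\mu=|x|^{-2\g}dx$ is a doubling (indeed $A_1$) measure on these balls, because $0<2\g<N-2s<N$.

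\medskip\noindent\emph{Step 1: weighted nonlocal Caccioppoli estimate and iteration for negative powers.}\enspace
For $\beta<0$ I would test \eqref{super} with $\varphi=\psi^2\,\bar v^{\,\beta}$, where $\psi$ is a radial cut-off between concentric balls $B_{\rho'}\subset B_\rho$ and $\bar v:=\min\{v,M\}+\e$ is a truncation introduced so that $\varphi\in Y_0^{s,\g}(B_\rho)$, afterwards letting $M\to\iy$, $\e\to 0$. The pointwise engine is an algebraic inequality with cut-off of the type of Lemma \ref{alg} (and its companions used in \cite{LPPS,dine}): $(v(x)-v(y))(\varphi(x)-\varphi(y))$ is bounded below by $c_\beta\,(w(x)-w(y))^2\min\{\psi(x),\psi(y)\}$ minus a remainder $C_\beta\,(\psi(x)-\psi(y))^2\max\{\bar v(x),\bar v(y)\}^{\beta+1}$, where $w:=\bar v^{(\beta+1)/2}$. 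Integrating against $d\nu$, splitting $\ren=B_\rho\cup(B_{2r}\setminus B_\rho)\cup(\ren\setminus B_{2r})$ and estimating the long-range tail with the favourable sign coming from $v\ge 0$ on all of $\ren$, one obtains
$$
\iint_{B_{\rho'}\times B_{\rho'}}\!\big(w(x)-w(y)\big)^2\,d\nu\ \le\ \frac{C}{(\rho-\rho')^{2s}}\int_{B_\rho}w^2\,d\mu.
$$
Inserting this into the (scaled) weighted Sobolev inequality of Proposition \ref{Sobolev00} upgrades $L^2(d\mu)$ to the weighted $L^{2^{*}_{s}}$ on $B_{\rho'}$, i.e. gives a reverse H\"older inequality with gain $\kappa:=\tfrac{2^{*}_{s}}{2}=\tfrac{N}{N-2s}>1$. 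Reading it with $w=\bar v^{(\beta+1)/2}$ and iterating over a geometric sequence of radii decreasing from $\tfrac74 r$ to $\tfrac32 r$ with exponents $\beta_k+1\to-\iy$ (the infinite product of gains converges since $\kappa>1$), and finally letting $M\to\iy$, $\e\to 0$, one reaches, for any fixed $t\in(0,1)$,
$$
\inf_{B_{3r/2}}v\ \ge\ C^{-1}\big(\langle v^{-t}\rangle_{7r/4}\big)^{-1/t}.
$$

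\medskip\noindent\emph{Step 2: the crossover and conclusion.}\enspace
Testing \eqref{super} with $\varphi=\psi^2(v+\e)^{-1}$, letting $\e\to 0$, and using the standard pointwise logarithmic inequality, one gets
$$
\iint_{B_{7r/4}\times B_{7r/4}}\!\big(\log v(x)-\log v(y)\big)^2\min\{\psi(x),\psi(y)\}\,d\nu\ \le\ C\,r^{-2s}\,\mu(B_{7r/4}),
$$
which, via the weighted Poincar\'e--Wirtinger inequality (Theorem \ref{PW}), shows that $\log v$ has uniformly bounded mean oscillation with respect to $d\mu$ on these balls. As $(\ren,|\cdot|,d\mu)$ is a space of homogeneous type, the John--Nirenberg inequality then furnishes a fixed $p_0>0$ and a constant $C$ with $\langle v^{p_0}\rangle_{7r/4}\,\langle v^{-p_0}\rangle_{7r/4}\le C$. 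Finally, one more Moser iteration --- now for positive exponents ranging in $(p_0,q)\subset(0,\kappa)$, which is precisely why the bound is restricted to $q<\kappa=\tfrac{N}{N-2s}$, and where Lemma \ref{alg} applies directly --- gives $\big(\langle v^q\rangle_{r}\big)^{1/q}\le C\big(\langle v^{p_0}\rangle_{7r/4}\big)^{1/p_0}$ (for $q\le p_0$ this is trivial by H\"older). Chaining the three estimates,
$$
\big(\langle v^q\rangle_{r}\big)^{1/q}\ \le\ C\,\big(\langle v^{p_0}\rangle_{7r/4}\big)^{1/p_0}\ \le\ C\,\big(\langle v^{-p_0}\rangle_{7r/4}\big)^{-1/p_0}\ \le\ C\,\inf_{B_{3r/2}}v,
$$
and, multiplying through by the explicit factor $\mu(B_r)^{1/q}$, we obtain $\big(\int_{B_r}v^q\,d\mu\big)^{1/q}\le C\inf_{B_{3r/2}}v$, as asserted.

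\medskip\noindent\emph{Main obstacle.}\enspace
The genuinely delicate point is the interplay of nonlocality with the two singular weights: both the energy measure $d\nu$ and the mass measure $d\mu$ blow up at the origin, which lies \emph{inside} every ball $B_\rho$, so the Caccioppoli, Sobolev (Proposition \ref{Sobolev00}) and Poincar\'e--Wirtinger (Theorem \ref{PW}) constants must be controlled uniformly and with the correct scaling in $\rho$, and the long-range contributions $\int_{\ren\setminus B_{2r}}$ must be split off and estimated with the right sign using $v\ge 0$ on all of $\ren$. Moreover, the formal choices $\varphi=\psi^2\bar v^{\,\beta}$ and $\varphi=\psi^2(v+\e)^{-1}$ have to be turned into genuinely admissible elements of $Y_0^{s,\g}$ by truncation and approximation, and the pointwise algebraic inequalities (the cut-off versions of Lemma \ref{alg}) must be made sharp enough that the nonlocal remainder terms can be absorbed on the left-hand side. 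Carrying out this weighted, nonlocal bookkeeping uniformly in the scale --- in particular the radial integral estimates behind the tail terms --- is where essentially all the technical effort of the section goes.
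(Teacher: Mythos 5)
Your proposal is correct, but it takes a genuinely different route from the paper's. The paper proceeds through Krylov--Safonov covering machinery: after the logarithmic growth estimate (Lemma \ref{lema1}), a De Giorgi truncation iteration (Lemma \ref{lema2}) converts a measure-density hypothesis on $\{v\ge k\}$ into a pointwise lower bound on $\inf v$; the covering Lemma \ref{cover} then amplifies this into a weak-$L^\eta$ estimate (Lemma \ref{tres}), and a separate Moser-type step (Lemma \ref{dos}) raises the exponent to any $q<\frac{N}{N-2s}$. You instead run the classical Moser scheme end to end: a negative-exponent iteration to relate $\inf v$ to $\langle v^{-t}\rangle^{-1/t}$, a BMO bound on $\log v$ with respect to $d\mu$ followed by the John--Nirenberg inequality in the space of homogeneous type $(\ren,d\mu)$ to effect the crossover $\langle v^{p_0}\rangle\langle v^{-p_0}\rangle\le C$, and a positive-exponent iteration up to $q<\frac{N}{N-2s}$. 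Both are classical routes to the weak Harnack inequality; yours trades the covering lemma for a John--Nirenberg theorem on a doubling metric measure space (standard, but an extra ingredient the paper deliberately avoids), while the paper keeps the argument measure-theoretically self-contained at the price of the more intricate covering/amplification step. The engine common to both --- the truncated and cut-off test functions made admissible in $Y_0^{s,\g}$, the pointwise algebraic inequalities behind Caccioppoli, the weighted Sobolev (Proposition \ref{Sobolev00}) and Poincar\'e--Wirtinger (Theorem \ref{PW}) inequalities with the correct $\rho^{2s}$-scaling, the favourable-sign treatment of the long-range tail using $v\ge 0$ on all of $\ren$, and the doubling property of $d\mu$ --- is the same, and you locate correctly where the constraint $q<\frac{N}{N-2s}$ enters.
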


The proof follows  classical arguments by Moser  and
Krylov-Safonov (see \cite{FKS} for the local case with weights). For the nonlocal
case we have the precedent of \cite{CKP}, where the
kernel is comparable to a fractional Laplacian and the operator
considered is of fractional $p$-Laplacian type. Since the kernel
defined in \eqref{Ltilde0} is singular we have to check the arguments step by
step. That is, our result can be seen as the
fractional counterpart of \cite{FKS}.
Notice that it is enough to consider the case
$B_r(x_0)=B_r(0)$. For simplicity of  typing, we will write $B_r$
instead of $B_r(0)$. We start proving the following estimate.
\begin{Lemma}\label{lema1}
Let $R>0$ such that $B_R\subset\Omega$, and assume that $v\in Y^{s,\g}(\ren)$ with $v\gvertneqq 0$, is a
supersolution to \eqref{peso1}. Let $k>0$ and suppose that for
some $\sigma\in (0,1]$ we have
\begin{equation}\label{elli1}
|B_r\cap \{v\ge k\}|_{d\mu}\ge \sigma|B_r|_{d\mu}
\end{equation}
with $0<r<\frac{R}{16}$. Then there exists a positive constant
$C=C(N,s,\g)$ such that
\begin{equation*}\label{elli2}
|B_{6r}\cap \{v\le 2\delta k\}|_{d\mu}\le
\frac{C}{\sigma\log(\frac{1}{2\delta})}|B_{6r}|_{d\mu}
\end{equation*}
for all $\delta\in (0,\frac 14)$.
\end{Lemma}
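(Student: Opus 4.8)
The plan is to adapt Moser's logarithmic estimate to the weighted nonlocal setting, using the Picone-type inequality of Theorem \ref{Picone} together with the weighted Poincar\'e--Wirtinger inequality of Theorem \ref{PW} as the analytic engine. The idea is to test the supersolution inequality \eqref{super} against a test function built from a negative power of $v$ localized by a radial cut-off, so as to produce a ``logarithmic energy'' bound.

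First I would fix a radial decreasing cut-off $\psi$ with $\operatorname{supp}\psi\subset B_{8r}$, $\psi\equiv 1$ on $B_{6r}$, $0\lneqq\psi\le 1$, and use as a test function in the weak formulation of the supersolution something of the form $\varphi=\psi^2/(v+\e)$ (with $\e\to 0$ afterwards, or directly $\varphi=\psi^2(v+\e)^{-1}$, legitimate since $v\gneqq 0$ and the required integrability for membership in $Y_0^{s,\g}(B_{8r})$ can be checked). Since $g=|x|^{-\g}f\ge 0$, the right-hand side $\int g\varphi\,dx$ is nonnegative, so
\[
\iint_{D_{B_{8r}}}\bigl(v(x)-v(y)\bigr)\bigl(\varphi(x)-\varphi(y)\bigr)\,d\nu\ge 0.
\]
Expanding $(v(x)-v(y))(\varphi(x)-\varphi(y))$ with $\varphi=\psi^2(v+\e)^{-1}$ and using the elementary pointwise inequality that controls $-(v(x)-v(y))\bigl(\frac{\psi^2(x)}{v(x)+\e}-\frac{\psi^2(y)}{v(y)+\e}\bigr)$ from below by a multiple of $\bigl(\log\frac{v(x)+\e}{v(y)+\e}\bigr)^2\min\{\psi^2(x),\psi^2(y)\}$ minus an error term in $(\psi(x)-\psi(y))^2$ (this is the standard Di Castro--Kuusi--Palatucci pointwise lemma, here carried out against the weight $|x|^{-\g}|y|^{-\g}$ in the kernel), I would arrive at the key Caccioppoli-type estimate
\[
\iint_{B_{8r}}\Bigl(\log\tfrac{v(x)+\e}{v(y)+\e}\Bigr)^2\min\{\psi^2(x),\psi^2(y)\}\,d\nu
\le C\iint_{B_{8r}}(\psi(x)-\psi(y))^2\,d\nu + (\text{tail term}).
\]
The right-hand side is bounded by $C r^{N-2s-2\g}$ after estimating the kernel integral of the Lipschitz cut-off (the homogeneity of $d\nu$ gives exactly this scaling), and the nonlocal tail $\int_{B_{8r}}\int_{\ren\setminus B_{8r}}$ is controlled because $v\ge 0$ everywhere so only the positive part contributes, again yielding the same power of $r$.

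Next I would set $w:=\log\frac{1}{2\delta(v+\e)/k}=\log\frac{k}{2\delta(v+\e)}$ (a function that, roughly, is large and positive where $v$ is small and is bounded above on $\{v\ge k\}$), apply Theorem \ref{PW} to $w$ on $B_{8r}$ with the weight $\psi$ to get $\int_{B_{8r}}(w-W_\psi)^2\psi\,d\mu\le Cr^{2s}\iint(w(x)-w(y))^2\min\{\psi(x),\psi(y)\}\,d\nu$, and observe that the right-hand side is precisely the logarithmic energy just bounded (since differences of $w$ equal $-$differences of $\log(v+\e)$), hence $\int_{B_{8r}}(w-W_\psi)^2\psi\,d\mu\le C|B_{6r}|_{d\mu}$. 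Using \eqref{elli1} on the set where $v\ge k$, where $w\le\log\frac{1}{2\delta}$ is bounded while its mean $W_\psi$ must be comparable to $\log\frac{1}{2\delta}$ minus a controlled constant, a Chebyshev argument on $B_{6r}\cap\{v\le 2\delta k\}=B_{6r}\cap\{w\ge 0\}$ gives
\[
|B_{6r}\cap\{v\le 2\delta k\}|_{d\mu}\le \frac{C}{\bigl(\log\frac{1}{2\delta}\bigr)^2}\,|B_{6r}|_{d\mu}+\frac{C}{\sigma\log\frac1{2\delta}}|B_{6r}|_{d\mu},
\]
and letting $\e\to0$ and absorbing the first term into the second (valid for $\delta<\tfrac14$, since $\log\frac1{2\delta}\ge\log2$) yields the claimed bound with constant $C/(\sigma\log\frac{1}{2\delta})$.

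The main obstacle I anticipate is the careful bookkeeping of the nonlocal tail terms and the pointwise algebraic inequality in the presence of the singular weight $|x|^{-\g}|y|^{-\g}$: unlike the pure fractional Laplacian case of \cite{CKP}, the measure $d\nu$ is not translation invariant, so the localization estimates $\iint(\psi(x)-\psi(y))^2\,d\nu\le Cr^{N-2s-2\g}$ and the tail bounds require genuine computation with radial integrals (the same ``very involved computations on nonlocal radial integrals'' the introduction warns about), in particular verifying that the weight does not destroy the scaling that makes the iteration close. A secondary technical point is justifying that $\psi^2(v+\e)^{-1}$ is an admissible test function in $Y_0^{s,\g}(B_{8r})$ and that passing $\e\to0$ is legitimate; this I would handle by a standard truncation/monotone convergence argument exploiting $v\gneqq 0$.
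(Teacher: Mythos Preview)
Your overall strategy coincides with the paper's: test the supersolution inequality with $\psi^2/(v+\e)$, invoke the pointwise logarithmic inequality (the paper cites \cite{CKP1}, your ``DKP lemma'') to extract an energy bound on $\log v$, estimate the tail and cut-off terms by $Cr^{N-2s-2\gamma}$, apply the weighted Poincar\'e--Wirtinger inequality of Theorem~\ref{PW}, and finish with a level-set/Chebyshev argument. The Picone inequality you advertise at the outset is not actually used, nor is it used in the paper.

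There is, however, a genuine gap in your final step. Your auxiliary function $w=\log\dfrac{k}{2\delta(v+\e)}$ is \emph{untruncated}, hence unbounded both above (where $v$ is small) and below (where $v$ is large). First, the set identification you write is wrong: $\{w\ge 0\}=\{v+\e\le k/(2\delta)\}$, not $\{v\le 2\delta k\}$. Second, and more importantly, without truncation you cannot justify the claim that ``$W_\psi$ must be comparable to $\log\frac{1}{2\delta}$ minus a controlled constant'': the best you can extract from Poincar\'e--Wirtinger plus hypothesis~\eqref{elli1} is $W_\psi\le \log\frac{1}{2\delta}+C/\sqrt{\sigma}$, which yields a bound of order $(\log\frac{1}{2\delta}-C/\sqrt{\sigma})^{-2}$, valid only for $\delta$ small depending on $\sigma$ and with the wrong $\sigma$-dependence.

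The paper repairs this by taking instead
\[
w:=\Big(\min\Big\{\log\tfrac{1}{2\delta},\ \log\tfrac{k}{v}\Big\}\Big)_+,
\]
a $1$-Lipschitz truncation of $\log(k/v)$ which therefore inherits the log-energy bound, but now takes values in $[0,\log\frac{1}{2\delta}]$. With this choice one has the exact identities $\{v\ge k\}=\{w=0\}$ and $\{v\le 2\delta k\}=\{w=\log\frac{1}{2\delta}\}$. Hypothesis~\eqref{elli1} then gives $|B_{6r}\cap\{w=0\}|_{d\mu}\ge \frac{\sigma}{6^{N-2\gamma}}|B_{6r}|_{d\mu}$, from which one bounds $\langle w\rangle_{B_{6r}}$ linearly by $\frac{1}{\sigma}\int_{B_{6r}}|w-\langle w\rangle|\,d\mu$; combining with the $L^1$-Poincar\'e--Wirtinger bound $\int_{B_{6r}}|w-\langle w\rangle|\,d\mu\le C|B_{6r}|_{d\mu}$ yields
\[
\log\tfrac{1}{2\delta}\cdot\big|B_{6r}\cap\{w=\log\tfrac{1}{2\delta}\}\big|_{d\mu}\le \tfrac{C}{\sigma}\,|B_{6r}|_{d\mu},
\]
which is precisely the stated estimate. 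Replace your $w$ by this truncated version and the rest of your outline goes through.
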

\begin{proof} Without loss of generality we can assume that $v>0$ in
$B_R$, (otherwise we can deal with $v+\e$ and let $\e\to 0$ at the end). Let
$\psi\in \mathcal{C}^\infty_0(B_R)$ be such that $0\le \psi\le 1$,
$\text{supp}\: \psi\subset B_{7r}$, $\psi=1$ in $B_{6r}$ and
$|\nabla \psi|\le \frac{C}{r}$.

Using $\psi^{2}v^{-1}$ as a test function in \eqref{peso1}, it
follows that
$$
\int_{\mathbb{R}^{N}}\int_{\mathbb{R}^{N}}
(v(x)-v(y))(\psi^{2}(x)v^{-1}(x)-\psi^{2}(y)v^{-1}(y)) d\nu\ge 0.
$$
Thus
\begin{eqnarray}\label{eq1}
& 0\dyle \le \int_{B_{8r}}\int_{B_{8r}}
(v(x)-v(y))\left(\frac{\psi^{2}(x)}{v(x)}-\frac{\psi^{2}(y)}{v(y)}\right)
d\nu + 2\int_{\ren \setminus
B_{8r}}\int_{B_{8r}}(v(x)-v(y))\frac{\psi^{2}(x)}{v(x)}d\nu.
\end{eqnarray}
Denote $x=|x|x'$ and $y=\rho y'$, where $|x'|=|y'|=1$ and $\rho:=|y|$. We have that
\begin{equation*}\begin{split}
\dyle \int_{\ren \setminus
B_{8r}}\int_{B_{8r}}(v(x)-v(y))&\frac{\psi^{2}(x)}{v(x)}d\nu\le
\dyle\int_{\ren \setminus B_{8r}}\int_{B_{8r}}\psi^{2}(x)d\nu\\
&\le \dyle  \dint_{B_{7r}}
\frac{\psi^2(x)}{|x|^{\g}}\int_{8r}^{\infty}
\dfrac{\rho^{N-\g-1}}{|x|^{N+2s}}
\Big(\dint_{{\mathbb{S}}^{N-1}}\dfrac{dy'}{|\dfrac{\rho}{|x|}y'-x'|^{N+2s}}\Big)d\rho\,dx.
\end{split}\end{equation*}
Setting here $\tau:=\dfrac{\rho}{|x|}$,
\begin{equation*}\begin{split}
\dyle \int_{\ren \setminus
B_{8r}}\int_{B_{8r}}(v(x)-v(y))&\frac{\psi^{2}(x)}{v(x)}d\nu \le
C\dint_{B_{7r}}\frac{\psi^{2}(x)}{|x|^{2\g+2s}}\int_{\frac
87}^{\infty} \tau^{N-\g-1}
\Big(\dint_{{\mathbb{S}}^{N-1}}\dfrac{dy'}{|\tau y'-x'|^{N+2s}}\Big)d\tau\,dx\\
& \le  C\dyle
\dint_{B_{7r}}\frac{\psi^{2}(x)}{|x|^{2\g+2s}}\int_{\frac
87}^{\infty} \tau^{N-\g-1}D(\tau)d\tau\,dx
\end{split}\end{equation*}
where
\begin{equation*}\label{kkk}
D(\tau):=2\frac{\pi^{\frac{N-1}{2}}}{\Gamma(\frac{N-1}{2})}\int_0^\pi
\frac{\sin^{N-2}(\theta)}{(1-2\sigma \cos
(\theta)+\tau^2)^{\frac{N+2s}{2}}}d\theta.
\end{equation*}
Considering the behavior of $D$ near from $0$, $1$ and at
$\infty$ (see \cite{FV}), we obtain that
$$
\int_{\frac 87}^{\infty} \tau^{N-\g-1}D(\tau)d\tau\le C,
$$
and therefore we conclude that
\begin{equation}\label{eq2}
\int_{\ren \setminus
B_{8r}}\int_{B_{8r}}(v(x)-v(y))\frac{\psi^{2}(x)}{v(x)}d\nu\le
Cr^{N-2s-2\g}.
\end{equation}
Moreover
\begin{equation}\begin{split}\label{eq3}
\dyle \int_{B_{8r}}\int_{B_{8r}}
(v(x)-v(y))&\left(\frac{\psi^{2}(x)}{v(x)}-\frac{\psi^{2}(y)}{v(y)}\right)
d\nu\\
&=\dyle \int_{B_{6r}}\int_{B_{6r}}
(v(x)-v(y))\left(\frac{\psi^2(x)}{v(x)}-\frac{\psi^2(y)}{v(y)}\right) d\nu \\
&\;\;\;+ \dyle
\iint_{B_{8r}\times B_{8r} \setminus B_{6r}\times
B_{6r}}(v(x)-v(y))\left(\frac{\psi^{2}(x)}{v(x)}-\frac{\psi^{2}(y)}{v(y)}\right)d\nu\\
&\dyle \le \int_{B_{6r}}\int_{B_{6r}}
(v(x)-v(y))\left(\frac{1}{v(x)}-\frac{1}{v(y)}\right) d\nu+Cr^{N-2s-2\g},
\end{split}\end{equation}
where the last inequality follows  as a consequence of that $\psi\equiv 1$ in $B_{6r}$ and that the integral in $B_{8r}\times B_{8r} \setminus B_{6r}\times
B_{6r}$ can be estimated in the same way as \eqref{eq2}.

Furthermore, from \cite[Proof of Lemma 1.3]{CKP1},  there exist $C_1, C_2>0$ such that
\begin{equation}\label{eqlog}
(v(x)-v(y))\left(\frac{\psi^{2}(x)}{v(x)}-\frac{\psi^{2}(y)}{v(y)}\right)\le
-C_1(\log(v(x))-\log(v(y)))^2 \psi^2(y)+C_2(\psi(x)- \psi(y))^2.
\end{equation}
Hence from \eqref{eqlog} we deduce
\begin{equation}\label{eq4}
\int_{B_{6r}}\int_{B_{6r}}
(v(x)-v(y))\left(\frac{1}{v(x)}-\frac{1}{v(y)}\right) d\nu\leq -C_1\int_{B_{6r}}\int_{B_{6r}}(\log(v(x))-\log(v(y)))^2 \,d\nu,
\end{equation}
and thus, putting together \eqref{eq1}, \eqref{eq2}, \eqref{eq3} and \eqref{eq4}, it follows that
\begin{equation}\label{elli3}
\int_{B_{6r}}\int_{B_{6r}} (\log(v(x))-\log(v(y)))^2d\nu\le
Cr^{N-2s-2\g}.
\end{equation}
Let $\delta\in (0,1/4)$. We set $w(x):=\min\{\log\big(\dfrac{1}{2\delta}),
\log(\dfrac{k}{v}\big)\}_+$,and hence, since $w$ is a truncation of $\log\left(\dfrac{k}{v}\right)$, from \eqref{elli3} we obtain that
\begin{equation*}\label{elli4}
\int_{B_{6r}}\int_{B_{6r}} (w(x)-w(y))^2d\nu\le Cr^{N-2s-2\g}.
\end{equation*}
Call
$$\langle
w\rangle_{B_{6r}}:=\dfrac{1}{|B_{6r}|_{d\mu}}\int_{B_{6r}}w(x)d\mu.$$
Thus, using H\"older and Poincaré-Wirtinger inequalities, Theorem \ref{PW},
\begin{equation}\label{eq6}
\int_{B_{6r}}|w(x)-\langle w\rangle_{B_{6r}}|d\mu\le
C|B_{6r}|_{d\mu}.
\end{equation}
Notice that $\{x\in\Omega:\;w(x)=0\}=\{x\in\Omega:\;v(x)\ge k\}$, and then from \eqref{elli1} we have
\begin{equation*}\label{elli111}
|B_{6r}\cap \{w=0\}|_{d\mu}\geq
\dfrac{\sigma}{6^{N-2\g}}|B_{6r}|_{d\mu}.
\end{equation*}
As a consequence of this, it can be seen that
$$
|B_{6r}\cap \{w=\log\Big(\frac{1}{2\delta}\Big)\}|_{d\mu}\le
\dfrac{6^{N-2\g}}{\sigma \log(\frac{1}{2\delta})}\int_{B_{6r}}|w(x)-\langle
w\rangle_{B_{6r}}|d\mu,
$$
and hence, we conclude the result by applying \eqref{eq6} and the fact that
$$
\{B_{6r}\cap \{v\leq 2\delta k\}\}=\{B_{6r}\cap
\{w=\log\Big(\frac{1}{2\delta}\Big)\}\}.
$$
\end{proof}

As a consequence we have the next estimate on $\inf_{B_{4r}} v$.
\begin{Lemma}\label{lema2}
Assume that the hypotheses of Lemma \ref{lema1} are satisfied.
Then, there exists $\delta \in (0,\frac{1}{2})$,
depending only on $N$, $s$, $\sigma$ and $\g$, such that
\begin{equation}\label{estim2} \inf_{B_{4r}} v\ge \delta k.
\end{equation}
\end{Lemma}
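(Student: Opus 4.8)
The plan is to deduce Lemma \ref{lema2} from Lemma \ref{lema1} by a De Giorgi--type iteration on a decreasing sequence of levels, played out on a nested family of balls shrinking down to $B_{4r}$. First I would fix the $\delta$ eventually: the idea is that Lemma \ref{lema1} controls the $d\mu$-measure of the ``bad set'' $\{v \le 2\delta k\}$ inside $B_{6r}$ by a quantity that tends to $0$ as $\delta \to 0$. So one picks $\delta$ small enough (depending only on $N,s,\sigma,\gamma$, through the constant $C$ of Lemma \ref{lema1}) so that
\begin{equation*}
|B_{6r}\cap\{v\le 2\delta k\}|_{d\mu}\le \frac{C}{\sigma\log(\frac{1}{2\delta})}|B_{6r}|_{d\mu}\le \eta_0\,|B_{6r}|_{d\mu},
\end{equation*}
where $\eta_0$ is a smallness threshold to be identified from a Caccioppoli/energy estimate. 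This reduces matters to a ``measure-to-pointwise'' statement: if the bad set of $v$ relative to a level $\ell$ has sufficiently small $d\mu$-measure in a ball, then $v \ge \ell/2$ (say) on a slightly smaller ball.

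The core of that measure-to-pointwise passage is the standard nonlocal De Giorgi lemma. I would introduce truncations $w_j := (\ell_j - v)_+$ with levels $\ell_j := \delta k(1 + 2^{-j})$ decreasing to $\delta k$, and radii $r_j := 4r(1 + 2^{-j})$ decreasing from $6r$-ish to $4r$, together with cutoffs $\psi_j$ supported in $B_{r_j}$, equal to $1$ on $B_{r_{j+1}}$, with $|\nabla\psi_j|\le C 2^j/r$. Testing the supersolution inequality \eqref{super} for $v$ against $\psi_j^2 w_j$ (a legitimate nonnegative test function, using Proposition \ref{prop} for the weighted operator and that $g = |x|^{-\gamma}f \ge 0$, so the right-hand side only helps) yields a Caccioppoli inequality: the weighted Gagliardo energy of $w_j\psi_j$ over $B_{r_j}$ is bounded by $C 4^j r^{-2s}$ times the $d\mu$-measure of $\{v < \ell_j\}\cap B_{r_j}$ (the tail term from $\mathbb{R}^N\setminus B_{r_j}$ is handled exactly as in \eqref{eq2} of Lemma \ref{lema1}, since $v\ge 0$ forces $w_j = 0$ outside $B_R$ and the singular-kernel radial integral converges). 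Then the weighted fractional Sobolev inequality, Proposition \ref{Sobolev00}, upgrades this energy bound to an $L^{2^*_s}$ bound, and Hölder between the level gap $\ell_j - \ell_{j+1}\sim 2^{-j}\delta k$ and the measures produces the recursive inequality $a_{j+1}\le C b^j a_j^{1+\beta}$ for $a_j := |B_{r_j}\cap\{v<\ell_j\}|_{d\mu}/|B_r|_{d\mu}$, with $\beta = 2s/N > 0$ and $b>1$ explicit. The fast-geometric-convergence lemma (the same mechanism as Lemma \ref{st}, or its classical discrete analogue) then gives $a_j\to 0$ provided $a_0\le C^{-1/\beta} b^{-1/\beta^2} =: \eta_0$, which is exactly the threshold we used to fix $\delta$. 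Since $a_j\to 0$ and $\ell_j\to \delta k$, we get $|B_{4r}\cap\{v<\delta k\}|_{d\mu} = 0$, i.e. $v\ge\delta k$ a.e. in $B_{4r}$, which is \eqref{estim2}.

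The main obstacle is the Caccioppoli step for the singular weighted operator $L_\gamma$: one must check that testing against $\psi_j^2 w_j$ is admissible in $Y_0^{s,\gamma}(B_{r_j})$ and that the resulting pointwise algebraic manipulation of $(v(x)-v(y))(\psi_j^2 w_j(x) - \psi_j^2 w_j(y))$ produces the right sign and the right local energy term with only a controllable error coming from $(\psi_j(x)-\psi_j(y))^2$ — this is the weighted-kernel analogue of the estimate quoted from \cite[Proof of Lemma 1.3]{CKP1} and used here in \eqref{eqlog}, and it is where the nonlocal and singular-weight features interact. Once that pointwise inequality is in hand, everything else is the routine De Giorgi iteration, with the weighted Sobolev inequality of Proposition \ref{Sobolev00} playing the role of the usual Sobolev embedding and with all geometric constants depending only on $N,s,\gamma$ (the $\sigma$-dependence entering solely through the choice of $\delta$ in Lemma \ref{lema1}).
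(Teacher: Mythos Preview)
Your proposal is correct and follows exactly the paper's approach: a De Giorgi iteration with truncations $w_j=(\ell_j-v)_+$, the Caccioppoli estimate (the paper's \eqref{last}) obtained by testing with $\psi_j^2 w_j$, the weighted Sobolev inequality of Proposition~\ref{Sobolev00}, and the geometric convergence of $A_j:=|B_j\cap\{v<\ell_j\}|_{d\mu}/|B_j|_{d\mu}$, with the smallness threshold coming from Lemma~\ref{lema1}. One minor correction: the tail control does not come from ``$w_j=0$ outside $B_R$'' (that need not hold), but from $v\ge 0$ in $\ren$, which gives $(v(x)-v(y))\,w_j(x)\le \ell_j\cdot \ell_j$ on $\{v(x)<\ell_j\}$ and hence the bound $\ell_j^2\,|B_j\cap\{v<\ell_j\}|_{d\mu}\times \sup_{x\in\mathrm{supp}\,\psi_j}\int_{\ren\setminus B_j}|x-y|^{-N-2s}\,dy$, exactly as the paper invokes via \cite[Lemma~3.2]{CKP}.
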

\begin{proof} We set $w:=(l-v)_{+}$ where $l\in (\delta k,2\delta k)$ and let
$\psi\in \mathcal{C}^\infty_0(B_\rho)$ with $r\le \rho<6r$.

Using $w\psi^2$ as a test function in \eqref{peso1} and using similar arguments to those in
\cite[Lemma 3.2]{CKP}, we reach that
\begin{equation}\label{last}
\begin{split}
\dyle \int_{B_{\rho}}\int_{B_\rho} &(w(x)\psi(x)-w(y)\psi(y))^2
d\nu \\
\le & \,C_1\dyle
\int_{B_{\rho}}\int_{B_\rho}\max\{w(x),w(y)\}^2(\psi(x)-\psi(y))^2d\nu \\
&+
l^2|B_\rho\cap \{v<l\}|_{d\mu}\times \sup_{\{x\in
\text{supp}(\psi)\}}\int_{\ren\backslash
B_{\rho}}\dfrac{dy}{|x-y|^{N+2s}}.
\end{split}
\end{equation}
We define now the sequences $\{l_j\}_{j\in\mathbb{N}}$,
$\{\rho_j\}_{j\in\mathbb{N}}$ and
$\{\bar{\rho_j}\}_{j\in\mathbb{N}}$ by setting
$$
l_j:=\delta k+2^{-j-1}\delta k,\:\rho_j:=4r+2^{1-j}r,\:
\bar{\rho}_j:=\frac{\rho_j+\rho_{j+1}}{2}.
$$
Likewise, let us denote
$$w_j:=(l_j-v)_+,\qquad B_j:=B_{\rho_j},$$
and let $\psi_j\in \mathcal{C}_0^\infty(B_{\bar{\rho_j}})$ be such
that $0\leq \psi\leq 1$, $\psi\equiv 1$ in $B_{j+1}$ and
$|\nabla\psi_j|\leq 2^{j+3}/r$.

Using the Sobolev inequality stated in Proposition \ref{Sobolev00}
we obtain that
$$
C(N,s,\g)\Big(\dint\limits_{B_j}
\dfrac{|w_j\psi_j|^{2^*_{s}}}{|x|^{\g
2^*_s}}\,dx\Big)^{\frac{2}{2^*_{s}}}\le
\dint_{B_j}\dint_{B_j}(w_j(x)\psi_j(x)-w_j(y)\psi_j(y))^2d\nu.
$$
Hence, using the facts that
$$w_j\psi_j\ge (l_j-l_{j+1})\hbox{ in }B_{j+1}\cap \{v<l_{j+1}\},$$
and
$$|x|^{-2^*_s\g}\ge \bar{C}r^{-(2^*_s-2)\g}|x|^{-2\g}\hbox{ in }B_j,$$
with $\bar{C}$ independent of $j$, it follows that
$$
\Big(\dint\limits_{B_j} \dfrac{|w_j\psi_j|^{2^*_{s}}}{|x|^{\g
2^*_s}}\,dx\Big)^{\frac{2}{2^*_{s}}}\ge \frac{C}{r^{(2^*_s-2)\g
}}(l_j-l_{j+1})^2|B_{j+1} \cap \{v<l_{j+1}\}|^{\frac{2}{2^*_{s}}}_{d\mu}.
$$
Since $|B_{j+1}|_{d\mu}= C r^{N-2\gamma}$, then
$$\dfrac{r^{(2^*_s-2)\g
}}{(|B_{j+1}|_{d\mu})^{\frac{2}{2^*}}}=C
r^{-(N-2s-2\g)}r^{4s\gamma(\frac{1}{N-2s}-\frac{1}{N})}\le
C(N,s,\gamma,\Omega)r^{-(N-2s-2\g)}.$$
Hence we conclude that
\begin{equation*}\begin{split}
(l_j-l_{j+1})^2&\Big(\dfrac{|B_{j+1} \cap
\{v<j+1\}|_{d\mu}}{|B_{j+1}|_{d\mu}}\Big)^{\frac{2}{2_{s}^*}}\\
&\le
C(N,s,\g,\Omega)r^{-(N-2s-2\g)}\dint_{B_j}\dint_{B_j}(w_j(x)\psi_j(x)-w_j(y)\psi_j(y))^2d\nu.
\end{split}\end{equation*}
Applying \eqref{last} to $w_j$, we conclude that
\begin{equation}\label{last1}
\begin{split}
(l_j-l_{j+1})^2&\Big(\dfrac{|B_{j+1} \cap
\{v<j+1\}|_{d\mu}}{|B_{j+1}|_{d\mu}}\Big)^{\frac{2}{2_{s}^*}}\\
\le & \,\dfrac{C(N,s,\g)}{r^{(N-2s-2\g)}}\left(C_1\dyle
\int_{B_{j}}\int_{B_j}\max\{w_j(x),w_j(y)\}^2(\psi_j(x)-\psi_j(y))^2d\nu \right.\\
&\left.+
l^2_j|B_j\cap \{v<l_j\}|_{d\mu}\, \sup_{\{x\in
\text{supp}(\psi_j)\}}\int_{\ren\backslash
B_{j}}\dfrac{dy}{|x-y|^{N+2s}}\right).
\end{split}
\end{equation}
We have
\begin{equation}\label{lastt1}
\begin{split}
\dyle \int_{B_{j}}\int_{B_j}\max&\{w_j(x),w_j(y)\}^2(\psi_j(x)-\psi_j(y))^2d\nu\\
& \le
\dyle l_{j}^2\|\nabla\psi_j\|_{L^\infty(B_j)}^2\int_{B_{j}\cap
\{v<l_j\}}\dfrac{dx}{|x|^\g}\int_{B_j}\dfrac{|x-y|^{2-2s}}{|x-y|^N}\dfrac{dy}{|y|^\g}\\
&\le \dyle C2^{2j}l_{j}^2
r^{-2s}\int_{B_{j}\cap \{v<l_j\}}\dfrac{dx}{|x|^{2\g}}=C2^{2j}l_{j}^2 r^{-2s}|B_{j}\cap
\{v<l_j\}|_{d\mu}.
\end{split}\end{equation}
Now, estimating the
term
$$\sup_{\{x\in
\text{supp}(\psi_j)\}}\int_{\ren\backslash
B_{j}}\dfrac{dy}{|x-y|^{N+2s}}$$ as in \cite[Lemma 3.2]{CKP},
and considering \eqref{last1} and
\eqref{lastt1}, we obtain that
\begin{equation*}\begin{split}
(l_j-l_{j+1})^2\Big(\dfrac{|B_{j+1} \cap
\{v<j+1\}|_{d\mu}}{|B_{j+1}|_{d\mu}}\Big)^{\frac{2}{2_{s}^*}}&\le\
2^{j(2+2s+N)}l_{j}^2
\dfrac{C(N,s,\gamma)}{r^{(N-2s-2\g)}}r^{-2s}|B_{j}\cap
\{v<l_j\}|_{d\mu}\\
&\le
\tilde{C}2^{j(2+2s+N)}l_{j}^2\dfrac{|B_{j}
\cap \{v<j\}|_{d\mu}}{|B_{j}|_{d\mu}}
\end{split}\end{equation*}
where $\tilde{C}=\tilde{C}(N,s,\gamma)$ but independent
of $j$ and $r$.

Defining $A_j:=\dfrac{|B_{j} \cap \{v<j\}|_{d\mu}}{|B_{j}|_{d\mu}}$
and following as in \cite{CKP}, we get the desired result.
\end{proof}

Now, we need to obtain a kind of \emph{reverse Hölder inequality}
for $v$.
\begin{Lemma}\label{dos}
Let $r>0$ such that $B_{3r/2}\subset\Omega$ and suppose that $v$ is a supersolution to \eqref{peso1}. Then, for every
$0<\alpha_1<\alpha_2<\frac{N}{N-2s}$, we have
\begin{equation}\label{est3}
\left(\dfrac{1}{|B_{r}|_{d\mu}}\dint\limits_{B_{r}}v^{\alpha_2}\,d\mu\right)^{\frac{1}{\alpha_2}}
\le C \left(\dfrac{1}{|B_{3r/2}|_{d\mu}}\dint\limits_{B_{3r/2}}v^{\alpha_1}\,d\mu\right)^{\frac{1}{\alpha_1}},
\end{equation}
with $C=C(N,s,\gamma,\alpha_1,\alpha_2)>0$.
\end{Lemma}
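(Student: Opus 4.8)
The plan is to run a Moser-type iteration for the (nonnegative) supersolution $v$ of \eqref{peso1}, following the scheme of Lemmas~\ref{lema1}--\ref{lema2} and of \cite{CKP,FKS}, but testing directly against suitable powers of $v$ rather than against truncations of $l-v$. As usual one may assume $v>0$ in $B_{3r/2}$ (otherwise replace $v$ by $v+\e$ and let $\e\to0$ at the end), and all the powers of $v$ below are to be read on the truncations $T_k(v)$, the passage to the limit $k\to\infty$ being justified by the weighted analogue of Proposition~\ref{prop}. Since $g=|x|^{-\g}f\ge0$, Definition~\ref{super00} gives $\iint_{D_{B_\rho}}(v(x)-v(y))(\varphi(x)-\varphi(y))\,d\nu\ge0$ for every nonnegative $\varphi\in Y^{s,\g}_0(B_\rho)$ with $B_\rho\subset B_{3r/2}$, which is the only way the equation enters.

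The heart of the argument is a single self-improving step. Fix $r\le\rho'<\rho\le\frac{3r}{2}$ and a cut-off $\psi\in\mathcal C^\infty_0(B_\rho)$ with $0\le\psi\le1$, $\psi\equiv1$ on $B_{\rho'}$ and $|\nabla\psi|\le C/(\rho-\rho')$. First I would insert $\varphi=\psi^2 v^{\,p-1}$, for an exponent $p\in(0,1)$, into the supersolution inequality and split the domain as in \eqref{eq1}--\eqref{eq3}. Using the pointwise inequalities underlying \eqref{eqlog} and Lemma~\ref{alg} (applied to $v$ with $a=1-p$, as in \cite{CKP1}), the local part $\iint_{B_\rho\times B_\rho}$ is bounded below by $c\iint_{B_\rho\times B_\rho}\big(\psi(x)v^{p/2}(x)-\psi(y)v^{p/2}(y)\big)^2\,d\nu$ minus a cut-off error which, by the computation in \eqref{lastt1}, is at most $C(\rho-\rho')^{-2s}\int_{B_\rho}v^{p}\,d\mu$ (here one uses $\rho-\rho'\simeq r$, as will be the case along the chain below), while the tails over $\ren\setminus B_\rho$ are controlled through $\sup_{x\in\operatorname{supp}\psi}\int_{\ren\setminus B_\rho}|x-y|^{-N-2s}\,dy\le C(\rho-\rho')^{-2s}$ and the estimates on the radial kernel $D(\tau)$ used for \eqref{eq2}, exactly as in Lemmas~\ref{lema1}--\ref{lema2}. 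This yields the Caccioppoli-type inequality $\iint_{B_{\rho'}\times B_{\rho'}}\big(v^{p/2}(x)-v^{p/2}(y)\big)^2\,d\nu\le\frac{C}{(\rho-\rho')^{2s}}\int_{B_\rho}v^{p}\,d\mu$. Applying now the weighted Sobolev inequality of Proposition~\ref{Sobolev00} to $\psi v^{p/2}$, replacing $|x|^{-2^*_s\g}$ by $\bar C\,\rho^{-(2^*_s-2)\g}|x|^{-2\g}$ on $B_\rho$ and using $|B_\rho|_{d\mu}\simeq\rho^{N-2\g}$ just as in the proof of Lemma~\ref{lema2} (the leftover powers of $r$ cancelling, since $\frac{4s\g}{N}+(N-2\g)\frac{2s}{N}-2s=0$), one obtains, with $\kappa:=\frac{N}{N-2s}=\frac{2^*_s}{2}$, the reverse-H\"older step
$$
\Big(\frac{1}{|B_{\rho'}|_{d\mu}}\int_{B_{\rho'}}v^{\,p\kappa}\,d\mu\Big)^{\frac{1}{p\kappa}}\le\Big(\frac{C\,r^{2s}}{(\rho-\rho')^{2s}}\Big)^{\frac1p}\Big(\frac{1}{|B_{\rho}|_{d\mu}}\int_{B_{\rho}}v^{\,p}\,d\mu\Big)^{\frac1p},\qquad p\in(0,1).
$$

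Finally I would chain finitely many such steps along radii $\frac{3r}{2}=\rho_0>\rho_1>\dots>\rho_m=r$ with $\rho_i-\rho_{i+1}=\frac{r}{2m}$. Choosing the source exponents $q_i:=\alpha_2\kappa^{\,i-m}$ ($i=1,\dots,m$), all of which lie in $(0,1)$ because $0<\alpha_2<\kappa$ forces $\alpha_2\kappa^{-j}<1$ for every $j\ge1$, the steps compose directly ($q_{i+1}=q_i\kappa$, final output exponent $q_m\kappa=\alpha_2$); one only needs a single initial H\"older inequality on the normalized measure $d\mu/|B_{3r/2}|_{d\mu}$ to pass from $\alpha_1$ down to $q_1=\alpha_2\kappa^{-m}\le\alpha_1$, which holds as soon as $m\ge\log_\kappa(\alpha_2/\alpha_1)$, i.e.\ for some $m=m(N,s,\alpha_1,\alpha_2)$. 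Multiplying the resulting finitely many (scale-invariant) constants $\big(C\,r^{2s}/(\rho_i-\rho_{i+1})^{2s}\big)^{1/q_i}=\big(C(2m)^{2s}\big)^{1/q_i}$ gives \eqref{est3} with $C=C(N,s,\g,\alpha_1,\alpha_2)$. The one genuinely delicate point — as everywhere in Section~\ref{HH} — is the Caccioppoli step: controlling the nonlocal tails and the singular weight $|x|^{-\g}$ there requires exactly the radial nonlocal integral estimates developed for Lemmas~\ref{lema1}--\ref{lema2} and the weight-comparison trick of Lemma~\ref{lema2}; once these are available, the exponent bookkeeping needed to reach the sharp threshold $\frac{N}{N-2s}$ is routine.
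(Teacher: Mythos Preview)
Your approach is essentially the paper's. Both test the supersolution inequality with $\psi^2 v^{\,p-1}$ for $p\in(0,1)$ (the paper writes this as $\tilde v^{\,1-q}\psi^2$ with $q=2-p\in(1,2)$ and $\tilde v=v+d$), derive the weighted Caccioppoli inequality for $v^{p/2}$, and then apply Proposition~\ref{Sobolev00} together with the weight comparison $|x|^{-2^*_s\gamma}\ge \bar C\rho^{-(2^*_s-2)\gamma}|x|^{-2\gamma}$. The only substantive difference is that you spell out the finite Moser chain, whereas the paper stops after one step and the phrase ``by using H\"older inequality'' (which indeed needs iteration unless $\alpha_1\ge\alpha_2(N-2s)/N$); your version is actually more complete on this point.

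Two small corrections. First, your description of the pointwise step has the sign backwards: for $p-1<0$ the integrand $(v(x)-v(y))(\psi^2(x)v^{p-1}(x)-\psi^2(y)v^{p-1}(y))$ is bounded \emph{above} by $-c\big(\psi(x)v^{p/2}(x)-\psi(y)v^{p/2}(y)\big)^2$ plus the cut-off error, not below; it is this upper bound, combined with $\iint_{D_{B_\rho}}\ge0$ from the supersolution property and the tail estimate, that yields the Caccioppoli inequality after rearranging. Second, Lemma~\ref{alg} is stated for positive exponents $a>0$ and does not directly handle the negative test exponent $p-1$; the paper invokes the pointwise inequality of \cite[Lemma~3.3-(i)]{FK} here, and controls the tail simply via $|y|^{-\gamma}\le|x|^{-\gamma}$ on $B_{\tau r}\times(\ren\setminus B_{\tau r})$ rather than through the radial kernel $D(\tau)$. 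These are cosmetic; the Caccioppoli inequality you arrive at and the subsequent iteration are correct.
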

\begin{proof}
Let $q\in (1,2)$ and $d>0$. Set $\tilde{v}:=(v+d)$, and assume that
$\psi\in \mathcal{C}^\infty_0(\Omega)$ is such that
$\text{supp}(\psi)\subset B_{\tau r}, \psi=1$ in $B_{\tau' r}$ and
$|\nabla \psi|\le \frac{C}{(\tau-\tau')r}$ where $\frac 12 \le
\tau'<\tau<\frac 32$. Then using $\tilde{v}^{1-q}\psi^2$ as a test
function in \eqref{super}, we obtain that
\begin{eqnarray*}
& 0\dyle \le \int_{B_{{\tau r}}}\int_{B_{{\tau r}}}
(\tilde{v}(x)-\tilde{v}(y))\left(\frac{\psi^{2}(x)}{\tilde{v}^{q-1}(x)}-\frac{\psi^{2}(y)}{\tilde{v}^{q-1}(y)}\right)
d\nu + 2\int_{\ren \setminus
B_{{\tau r}}}\int_{B_{\tau r}}(\tilde{v}(x)-\tilde{v}(y))\frac{\psi^{2}(x)}{\tilde{v}^{q-1}(x)}d\nu.
\end{eqnarray*}
Since $|x|<|y|$ in $B_{\tau r}\times (\R^N\setminus B_{\tau r})$, and using the positivity of $\tilde{v}$ it follows
$$
\int_{\ren \setminus
B_{{\tau r}}}\int_{B_{{\tau r}}}(\tilde{v}(x)-\tilde{v}(y))\frac{\psi^{2}(x)}{{\tilde{v}^{q-1}(x)}}d\nu\le
\left(\dint\limits_{B_{{\tau r}}}\tilde{v}^{2-q}\psi^2\,d\mu \right)\left(
\sup_{\{x\in \text{supp}(\psi)\}}\int_{\ren\backslash
B_{{\tau r}}}\dfrac{dy}{|x-y|^{N+2s}}\right).
$$
Furthermore, by the pointwise inequality of Lemma 3.3-(i) in \cite{FK},  there exist positive constants $C_1$ and $C_2$,
depending on $q$, such that
\begin{equation*}\begin{split}
\int_{B_{\tau r}}\int_{B_{\tau r}}
(\tilde{v}(x)-\tilde{v}(y))&\left(\frac{\psi^{2}(x)}{\tilde{v}^{q-1}(x)}-\frac{\psi^{2}(y)}{\tilde{v}^{q-1}(y)}\right)
d\nu\\
\le &-C_1\int_{B_{\tau r}}\int_{B_{\tau r}}
(\tilde{v}^{\frac{2-q}{2}}(x)\psi(x)-\tilde{v}^{\frac{2-q}{2}}(y)\psi(y))^2d\nu
\\
&+C_2\int_{B_{\tau r}}\int_{B_{\tau r}}
((\tilde{v}^{{2-q}}(x)+\tilde{v}^{2-q}(y))(\psi(x)-\psi(y))^2
d\nu.
\end{split}\end{equation*}
By symmetry we have
\begin{equation*}\begin{split}
\int_{B_{\tau r}}\int_{B_{\tau
r}} ((\tilde{v}^{{2-q}}(x)+\tilde{v}^{2-q}(y))(\psi(x)-\psi(y))^2
d\nu=2\int_{B_{\tau
r}}\int_{B_{\tau r}}
(\tilde{v}^{{2-q}}(x)(\psi(x)-\psi(y))^2 d\nu
\end{split}\end{equation*}
and proceeding as in \cite[Lemma 4.6]{AMPP} we obtain
$$
\int_{B_{\tau r}}\int_{B_{\tau r}}
((\tilde{v}^{{2-q}}(x)+\tilde{v}^{2-q}(y))(\psi(x)-\psi(y))^2
d\nu\le \dfrac{Cr^{-2s}}{(\tau-\tau')^2}\dint\limits_{B_{\tau r}}\tilde{v}^{2-q}\,d\mu.
$$
Since
$$
\sup_{\{x\in \text{Supp}(\psi)\}}\int_{\ren\backslash
B_{{\tau r}}}\dfrac{dy}{|x-y|^{N+2s}}\le Cr^{-2s},
$$
then combining the estimates above we reach that
$$
\int_{B_{{\tau r}}}\int_{B_{{\tau r}}}
{(\tilde{v}^{\frac{2-q}{2}}(x)\psi(x)-\tilde{v}^{\frac{2-q}{2}}(y)\psi(y))^2}d\nu
\le \dfrac{Cr^{-2s}}{(\tau-\tau')^2}\dint\limits_{B_{\tau
r}}\tilde{v}^{2-q}\,d\mu.
$$

Hence, from the previous inequality and the Sobolev inequality in Proposition \ref{Sobolev00}, we get
\begin{equation*}
\begin{split}
\left(\dfrac{1}{|B_{\tau' r}|_{d\mu}}\dint\limits_{B_{\tau'
r}}\tilde{v}^{\frac{(2-q)N}{N-2s}}\,d\mu\right)^{\frac{N-2s}{N}}&\leq
\left(\dfrac{1}{|B_{\tau' r}|_{d\mu}}\dint\limits_{B_{\tau
r}}(\tilde{v}^{\frac{2-q}{2}}\psi)^{2^*_s}\,d\mu\right)^{\frac{N-2s}{N}} \\
&\le
\dfrac{C}{|B_{\tau r}|_{d\mu}(\tau-\tau')^2}\dint\limits_{B_{\tau
r}}\tilde{v}^{2-q}\,d\mu.
\end{split}
\end{equation*}
Since $q\in (1,2)$ is arbitrary and $\frac{N}{N-2s}>1$ by using Hölder inequality we obtain the estimate
\eqref{est3} for $\tilde{v}=v+d$ with $\alpha_1$ and  $\alpha_2$ in the hypotheses.
Finally letting $d\to 0$ and by the \textit{Monotone Convergence Theorem}  we conclude.
\end{proof}

In order to obtain the weak Harnack inequality, we need to prove the following estimate.
\begin{Lemma}\label{tres}
Let $r>0$ such that $B_r\subset\Omega$. Assume that $v$ is a supersolution to \eqref{peso1}.Then, there
exists a constant $\eta\in (0,1)$ depending only on $N$, $s$ and $\gamma$ such that
\begin{equation*}\label{est31}
\dyle\Big(\dfrac{1}{|B_r|_{d\mu}}\int_{B_r}v^{\eta}d\mu\Big)^{\frac{1}{\eta}}\le
C\inf_{B_r} v.
\end{equation*}
\end{Lemma}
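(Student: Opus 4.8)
I would deduce from Lemmas~\ref{lema1}--\ref{lema2} a power-type decay for the $d\mu$-distribution function of $v$ on $B_r$ and then integrate it by the layer-cake formula. As in the previous lemmas we may assume $m:=\inf_{B_r}v>0$ (work with $v+\e$ and let $\e\to0$ at the end). Recall that $d\mu=|x|^{-2\g}dx$ is a Muckenhoupt $A_1$ weight, since $0<2\g<N$, so $d\mu$ is doubling on every ball; this is what permits the covering argument below in the weighted framework.

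The engine is an \emph{expansion of positivity} consequence of Lemma~\ref{lema2}. Let $\sigma_0\in(0,1)$ be the smallness constant of the De Giorgi--Krylov--Safonov measure-covering lemma attached to the doubling measure $d\mu$, and let $\delta_0=\delta_0(N,s,\g)\in(0,\tfrac12)$ be the constant supplied by Lemma~\ref{lema2} for $\sigma=\sigma_0$. For balls reaching a neighbourhood of the origin this is exactly Lemma~\ref{lema2}; for balls $B_\rho(x_0)$ with $|x_0|$ large relative to $\rho$ the weight is comparable to a constant multiple of Lebesgue measure on $B_\rho(x_0)$ and the same statement follows, with uniform constants, from the non-weighted theory of \cite{CKP}. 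In all cases, for every admissible sub-ball and every $t>0$,
$$
|B_\rho(x_0)\cap\{v\ge t/\delta_0\}|_{d\mu}\ge\sigma_0|B_\rho(x_0)|_{d\mu}\ \Longrightarrow\ v\ge t\ \text{ on }B_{4\rho}(x_0).
$$
Applied at a scale comparable to $r$ (using $B_r\subset B_{4r}$) this forces: if $|B_r\cap\{v\ge k\}|_{d\mu}\ge\sigma_0|B_r|_{d\mu}$ then $\delta_0 k\le\inf_{B_{4r}}v\le m$; hence the level $t_0:=m/\delta_0$ satisfies $g(t):=|B_r\cap\{v>t\}|_{d\mu}\le\sigma_0|B_r|_{d\mu}$ for all $t\ge t_0$.

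Now fix $t\ge t_0$ and put $E:=B_r\cap\{v\ge t/\delta_0\}$, $F:=B_r\cap\{v\ge t\}$, so $|E|_{d\mu}\le|F|_{d\mu}\le\sigma_0|B_r|_{d\mu}$. By the implication above, whenever a sub-ball $B_\rho(x_0)$ with $B_{4\rho}(x_0)\subset B_r$ satisfies $|E\cap B_\rho(x_0)|_{d\mu}>\sigma_0|B_\rho(x_0)|_{d\mu}$ one has $B_{4\rho}(x_0)\subset F$; the covering lemma then yields $|E|_{d\mu}\le\theta|F|_{d\mu}$ for a fixed $\theta=\theta(N,s,\g)\in(0,1)$. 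Iterating this inclusion over the levels $t_0,\delta_0^{-1}t_0,\delta_0^{-2}t_0,\dots$ gives $|B_r\cap\{v\ge\delta_0^{-n}t_0\}|_{d\mu}\le\theta^n|B_r|_{d\mu}$, i.e. $g(t)\le C_0(t_0/t)^{\eta_0}|B_r|_{d\mu}$ for $t\ge t_0$, where $\eta_0:=\log(1/\theta)/\log(1/\delta_0)>0$ and $C_0=C_0(N,s,\g)$. Therefore, for any $\eta\in(0,\eta_0)$,
$$
\frac{1}{|B_r|_{d\mu}}\int_{B_r}v^\eta\,d\mu=\frac{\eta}{|B_r|_{d\mu}}\int_0^\infty t^{\eta-1}g(t)\,dt\le t_0^{\,\eta}+C_0\,\eta\,t_0^{\,\eta_0}\!\int_{t_0}^\infty t^{\eta-1-\eta_0}\,dt\le C(N,s,\g)\,t_0^{\,\eta},
$$
and since $t_0=m/\delta_0\le C(N,s,\g)\inf_{B_r}v$, raising to the power $1/\eta$ gives the claim with $\eta$ and $C$ depending only on $N,s,\g$.

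The delicate part is the covering step: one must set up the measure-theoretic De Giorgi--Krylov--Safonov lemma for the weighted doubling measure $d\mu$ on $B_r(0)$ and verify that the hypotheses of Lemmas~\ref{lema1}--\ref{lema2} — in particular the control of the nonlocal tail contributions, already built into those proofs — are preserved on the (generally off-center) sub-balls produced by the dyadic subdivision of $B_r$, comparing the singular weight with the local non-singular theory on balls far from the origin, and keeping the radii under control so that every enlargement used stays inside $\Omega$ (which, exactly as in Lemmas~\ref{lema1}--\ref{lema2}, costs at most a fixed dilation of $B_r$, and possibly iterating Lemma~\ref{lema2} a bounded number of times to match dilation factors). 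This bookkeeping, carried out in the spirit of \cite{CKP} and \cite{FKS}, is the real content; the layer-cake integration above is then routine.
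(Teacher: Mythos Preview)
Your approach is essentially the same as the paper's: expansion of positivity from Lemma~\ref{lema2}, a Krylov--Safonov covering argument for the doubling measure $d\mu$, iteration to obtain a power-type decay of the $d\mu$-distribution function of $v$, and then the layer-cake formula. The paper packages the covering step as a separate statement (Lemma~\ref{cover}, defining the enlargement $[E]_{\bar\delta}$ and its dichotomy) and runs the iteration in the ``growing'' direction---showing $[A^{i-1}_t]_{\bar\delta}\subset A^i_t$ and hence $|A^i_t|_{d\mu}\ge(\tilde C/\bar\delta)|A^{i-1}_t|_{d\mu}$ until $A^m_t=B_r$---whereas you phrase the same iteration in the dual ``shrinking'' form $|E|_{d\mu}\le\theta|F|_{d\mu}$; these are equivalent. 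Your explicit remark that Lemma~\ref{lema2} must be invoked on off-center sub-balls (handling balls far from the origin via comparability of the weight with Lebesgue measure and the unweighted theory of \cite{CKP}) is a point the paper passes over silently when it applies Lemma~\ref{lema2} to the balls $B_{3\rho}(x)$, $x\in B_r$, arising in Lemma~\ref{cover}.
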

To prove Lemma \ref{tres} (see \cite{KS} and \cite[Lemma 4.1]{CKP}) we need the next covering result in the
spirit of Krylov-Safonov theory.
Notice that  we are working with a doubling measure on bounded domains of $\mathbb{R}^N$.
\begin{Lemma}\label{cover}
Assume that $E\subset B_r(x_0)$ is a measurable set. For
$\bar{\delta}\in (0,1)$, we define
$$
[E]_{\bar{\delta}}:= \bigcup_{\rho>0}\{B_{3\rho}(x)\cap
B_r(x_0), x\in B_r(x_0): |E\cap B_{3\rho}(x)|_{d\mu}>\bar{\delta}|
B_{\rho}(x)|_{d\mu}\}.
$$
Then, either
\begin{enumerate}
\item $
|[E]_{\bar{\delta}}|_{d\mu}\ge\frac{\tilde{C}}{\bar{\delta}}|E|_{d\mu}$,
or \item $ [E]_{\bar{\delta}}=B_r(x_0)$,
\end{enumerate}
where $\tilde{C}$ depends only on $N$, $s$ and $\gamma$.
\end{Lemma}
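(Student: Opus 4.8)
The plan is to run the Krylov--Safonov covering scheme with respect to the doubling measure $d\mu=|x|^{-2\gamma}dx$, being careful that every constant depends only on $N$, $s$ and $\gamma$. The starting point is that, since $0<2\gamma<N-2s<N$, the power weight $|x|^{-2\gamma}$ is a Muckenhoupt $A_1$ weight; consequently $\mu$ is doubling with constant $C_\mu=C_\mu(N,\gamma)$, the Lebesgue differentiation theorem holds with respect to $\mu$, and one has the elementary comparison
$$
|B_\rho(x)\cap B_r(x_0)|_{d\mu}\ge c(N,\gamma)\,|B_\rho(x)|_{d\mu}\qquad\text{for all } x\in B_r(x_0),\ 0<\rho\le 2r,
$$
which follows from the fact that $B_\rho(x)\cap B_r(x_0)$ always contains a Euclidean ball of radius comparable to $\rho$ (by convexity of balls), together with the doubling property of $\mu$. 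As in the proof of Theorem~\ref{harnack} we take $x_0=0$.

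Since $\mu$-almost every point of $E$ has $\mu$-density $1$ in $E$, it suffices to estimate $|E^\ast|_{d\mu}$, where $E^\ast\subset E$ is the set of such density points and $|E^\ast|_{d\mu}=|E|_{d\mu}$. Assume alternative (2) fails, i.e.\ $[E]_{\bar\delta}\subsetneq B_r(0)$, and for $x\in E^\ast$ set
$$
\rho(x):=\sup\Big\{\rho>0:\ |E\cap B_{3\rho}(x)|_{d\mu}>\bar\delta\,|B_\rho(x)|_{d\mu}\Big\}.
$$
This supremum is taken over a nonempty set, because at a density point $|E\cap B_{3\rho}(x)|_{d\mu}=(1-o(1))\,|B_{3\rho}(x)|_{d\mu}\ge(1-o(1))\,|B_\rho(x)|_{d\mu}>\bar\delta\,|B_\rho(x)|_{d\mu}$ for $\rho$ small (here $\bar\delta<1$); and it is finite, because $|E\cap B_{3\rho}(x)|_{d\mu}\le|E|_{d\mu}<\infty$ while $|B_\rho(x)|_{d\mu}\to\infty$ as $\rho\to\infty$ (the condition $2\gamma<N$ makes $\mu(\ren)=\infty$). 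Approximating $\rho(x)$ by admissible radii from below shows $B_{3\rho(x)}(x)\cap B_r(0)\subset[E]_{\bar\delta}$, and since all radii above $\rho(x)$ are inadmissible and spheres are $\mu$-null, letting them decrease to $\rho(x)$ gives $|E\cap B_{3\rho(x)}(x)|_{d\mu}=\bar\delta\,|B_{\rho(x)}(x)|_{d\mu}$. The crucial point is that the failure of (2) upgrades inadmissibility to a bound valid for \emph{all} $\rho\ge\rho(x)$:
$$
|E\cap B_{3\rho}(x)|_{d\mu}\le\bar\delta\,|B_\rho(x)|_{d\mu}.
$$
Indeed, if $B_{3\rho}(x)\not\supseteq B_r(0)$ this is just inadmissibility of $\rho$, whereas if $B_{3\rho}(x)\supseteq B_r(0)$ then the reverse strict inequality would give $B_r(0)=B_{3\rho}(x)\cap B_r(0)\subset[E]_{\bar\delta}$, contradicting the failure of (2). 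In particular every admissible $\rho$ satisfies $3\rho<2r$, so $3\rho(x)\le 2r$.

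The balls $\{B_{3\rho(x)}(x)\}_{x\in E^\ast}$ cover $E^\ast$ and have radii bounded by $2r$; by the $5r$-covering lemma there is a countable pairwise disjoint subfamily $\{B_{3\rho(x_i)}(x_i)\}_i$ with $E^\ast\subset\bigcup_i B_{15\rho(x_i)}(x_i)$. Applying the bound above with $\rho=5\rho(x_i)$, then doubling from radius $\rho(x_i)$ to $5\rho(x_i)$ (at most three doublings), and finally $|B_{\rho(x_i)}(x_i)|_{d\mu}=\bar\delta^{-1}|E\cap B_{3\rho(x_i)}(x_i)|_{d\mu}$, we obtain
$$
|E\cap B_{15\rho(x_i)}(x_i)|_{d\mu}\le\bar\delta\,|B_{5\rho(x_i)}(x_i)|_{d\mu}\le\bar\delta\,C_\mu^{3}\,|B_{\rho(x_i)}(x_i)|_{d\mu}=C_\mu^{3}\,|E\cap B_{3\rho(x_i)}(x_i)|_{d\mu}.
$$
Summing over $i$ and using the disjointness of the $B_{3\rho(x_i)}(x_i)$, the inclusions $B_{\rho(x_i)}(x_i)\cap B_r(0)\subset B_{3\rho(x_i)}(x_i)\cap B_r(0)\subset[E]_{\bar\delta}$, and the comparison estimate from the first paragraph, we get
$$
|E|_{d\mu}=|E^\ast|_{d\mu}\le\sum_i|E\cap B_{15\rho(x_i)}(x_i)|_{d\mu}\le C_\mu^{3}\,\bar\delta\sum_i|B_{\rho(x_i)}(x_i)|_{d\mu}\le\frac{C_\mu^{3}}{c(N,\gamma)}\,\bar\delta\,|[E]_{\bar\delta}|_{d\mu},
$$
which is exactly alternative (1) with $\tilde C=c(N,\gamma)/C_\mu^{3}$, depending only on $N$, $s$ and $\gamma$.

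The ingredients I would merely quote, as in \cite{KS} and \cite[Lemma 4.1]{CKP}, are the $A_1$/doubling property of the power weight with its Lebesgue differentiation theorem, the $5r$-covering lemma, and the elementary comparison of $|B_\rho(x)\cap B_r(0)|_{d\mu}$ with $|B_\rho(x)|_{d\mu}$. The delicate point, and the only place where the argument could break, is the middle step: one must verify that the maximal radius $\rho(x)$ is well defined and finite at every density point, that a radius can be inadmissible only because of the measure condition and never because there is no room left inside $B_r(0)$ (this is precisely where the hypothesis ``$(2)$ fails'' enters), and that all doubling constants in the final chain of inequalities are independent of $\bar\delta$, $r$ and $E$, which is what yields the clean factor $1/\bar\delta$ in alternative (1).
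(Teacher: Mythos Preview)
The paper does not give its own proof of Lemma~\ref{cover}; it merely states the result, cites \cite{KS} and \cite[Lemma~4.1]{CKP}, and adds the remark ``Notice that we are working with a doubling measure on bounded domains of $\mathbb{R}^N$''. Your proposal is a correct and fully detailed implementation of exactly this standard Krylov--Safonov covering argument for doubling measures, so in substance it coincides with what the paper intends.

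Two small remarks on your write-up. First, the ``crucial point'' paragraph is somewhat overcomplicated: for every $\rho>\rho(x)$ the inequality $|E\cap B_{3\rho}(x)|_{d\mu}\le\bar\delta\,|B_\rho(x)|_{d\mu}$ is immediate from the definition of $\rho(x)$ as a supremum, irrespective of whether $B_{3\rho}(x)\supseteq B_r(0)$, and at $\rho=\rho(x)$ you already derived equality. The only place where the failure of alternative~(2) is genuinely used is to conclude $3\rho(x)\le 2r$ (so that the Vitali family has bounded radii and the comparison estimate $|B_\rho(x)\cap B_r(0)|_{d\mu}\ge c\,|B_\rho(x)|_{d\mu}$ applies). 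Second, the constant you obtain, $\tilde C=c(N,\gamma)/C_\mu^{3}$, depends only on $N$ and $\gamma$; the dependence on $s$ recorded in the statement is therefore harmless but superfluous.
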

\begin{pfn}{Lemma \ref{tres}}
Notice that, for any $\eta>0$,
\begin{equation}\label{rep}
\dyle\dfrac{1}{|B_r|_{d\mu}}\int_{B_r}v^{\eta}d\mu=\eta
\int_0^\infty
t^{\eta-1}\dfrac{|B_r\cap\{v>t\}|_{d\mu}}{|B_r|_{d\mu}}dt.
\end{equation}
Then, for $t>0$ and $i\in \mathbb{N}$, we set $ A^i_t:=\{x\in B_r:
v(x)>t\delta^i\}$ where $\delta $ is given by Lemma \ref{lema2}.
Notice that $A^{i-1}_t\subset A^i_t$.

Let $\rho>0$ and $x\in B_r$ such that $B_{3\rho}(x)\cap B_r\subset
[A^{i-1}_t]_{\bar{\delta}}$. Thus,
$$
|A^{i-1}_t\cap
B_{3\rho}(x)|_{d\mu}>\bar{\delta}|B_\rho|_{d\mu}=\frac{\bar{\delta}}{3^{N-2\g}}|B_{3\rho}|_{d\mu}.
$$
Hence, using Lemma \ref{lema2}, we reach that
$$
v(x)>\delta (t\delta^{i-1})=t\delta^i  \mbox{  for all }x\in B_r,
$$
and therefore $[A^{i-1}_t]_{\bar{\delta}}\subset A^{i}_t$, being
$[A^{i-1}_t]_{\bar{\delta}}$ as in Lemma \ref{cover}. This
fact, together with Lemma \ref{cover}, allows us to deduce that
\begin{equation}\label{alt1}A^{i}_t=B_r\;\hbox{ or }\;|A^{i}_t|_{d\mu}\ge
\frac{\tilde{C}}{\bar{\delta}}|A^{i-1}_t|_{d\mu}.
\end{equation}
Thus, if for some $m\in \mathbb{N}$ we have
\begin{equation}\label{nesr}
|A^0_t|_{d\mu}>\left(\frac{\bar{\delta}}{\tilde{C}}\right)^m|B_r|_{d\mu},
\end{equation}
then $A^{m}_t=B_r$. If not, it follows from
\eqref{alt1} that $$|A^{m}_t|_{d\mu}\ge
\frac{\tilde{C}}{\bar{\delta}}|A^{m-1}_t|_{d\mu}.$$
Since $A^{i-1}_t\subset A^{m}_t\subsetneq B_r$ for all $i\le m$,
the second point of the alternative \eqref{alt1} holds for
$A^{i-1}_t$ and then
$$|A^{m-1}_t|_{d\mu}\ge
\frac{\tilde{C}}{{\bar{\delta}}}|A^{m-2}_t|_{d\mu}....\ge
\left(\frac{\tilde{C}}{\bar{\delta}}\right)^{m-1}|A^{0}_t|_{d\mu}>\left(\frac{\tilde{C}}{\bar{\delta}}\right)^{-1}|B_r|_{d\mu}.$$
Thus $|A^{m}_t|_{d\mu}>|B_r|_{d\mu}$, a contradiction with the
fact that $A^{m}_t\subsetneq B_r$. Hence $A^{m}_t=B_r$.

It is clear that \eqref{nesr} holds if
\begin{equation}\label{mmm}
m>\frac{1}{\log(\frac{\bar{\delta}}{\tilde{C}})}\log\left(\frac{|A^{0}_t|_{d\mu}}{|B_r|_{d\mu}}\right),\end{equation}
and consequently, fixing $m$ to be the smallest
integer such that \eqref{mmm} holds, then $m\ge 1$ and
$$
0\le m-1\le
\frac{1}{\log(\frac{\bar{\delta}}{\tilde{C}})}\log\left(\frac{|A^{0}_t|_{d\mu}}{|B_r|_{d\mu}}\right).
$$
Thus, using the fact that $\delta\in (0,\frac 12)$, it can be
checked that
$$
\inf_{B_r}v>t\delta^m=\geq t\delta\left(\frac{|A^{0}_t|_{d\mu}}{|B_r|_{d\mu}}\right)^{\frac{1}{\beta}},
$$
with
$\beta:=\frac{\log(\frac{\bar{\delta}}{\tilde{C}})}{\log(\delta)}$.

Set now $\xi:=\inf_{B_r}v$. Then,
$$
\dfrac{|B_r\cap\{v>t\}|_{d\mu}}{|B_r|_{d\mu}}=\frac{|A^{0}_t|_{d\mu}}{|B_r|_{d\mu}}\le
\tilde{C}\delta^{-\beta}t^{-\beta}\xi^\beta.
$$
Going back to \eqref{rep}, we have
\begin{equation*}\begin{split}
\dyle\dfrac{1}{|B_r|_{d\mu}}\int_{B_r}v^{\eta}d\mu&\le
\eta\int_0^at^{\eta-1}dt +\eta \tilde{C}\int_a^\infty
t^{\eta-1}\delta^{-\beta}t^{-\beta}\xi^\beta dt\\
&= a^\eta-\eta\tilde{C}\delta^{-\beta}\xi^{\beta}\frac{a^{\eta-\beta}}{\eta-\beta}.
\end{split}\end{equation*}
Choosing $a:=\xi$ and $\eta:=\frac{\beta}{2}$, we reach the
result.
\end{pfn}

After this result, we can already prove the weighted weak Harnack inequality.
\medskip

\begin{pfn}{Theorem \ref{harnack}}
Using Lemma \ref{tres} we
obtain that
$$
\dyle\Big(\dfrac{1}{|B_r|_{d\mu}}\int_{B_r}v^{\eta}d\mu\Big)^{\frac{1}{\eta}}\le
C\inf_{B_r} v
$$
for some $\eta\in (0,1)$. Fixing $1\le q<\frac{N}{N-2s}$, by
Lemma \ref{dos} with $\alpha_1=\eta$ and $\alpha_2=q$,  it follows that
\begin{equation*}\label{est33}
\left(\dfrac{1}{|B_{r}|_{d\mu}}\dint\limits_{B_{
r}}v^{q}\,d\mu\right)^{\frac{1}{q}} \le C \left(\dfrac{1}{|B_{\frac
32 r}|_{d\mu}}\dint\limits_{B_{\frac32
r}}v^{\eta}\,d\mu\right)^{\frac{1}{\eta}}.
\end{equation*}
Hence
$$
\Big(\dfrac{1}{|B_{r}|_{d\mu}}\dint\limits_{B_{
r}}v^{q}\,d\mu\Big)^{\frac{1}{q}} \le C\inf_{B_{\frac 32 r}} v
$$
and we conclude.
\end{pfn}

As a consequence of
the previous Harnack inequality, we get much information about the behavior of the supersolutions to \eqref{prob} around the origin. In particular, we see that any of them must be unbounded, even if  $f\in L^\infty(\Omega)$.
\begin{Lemma}\label{Lm:singularity_elip}
Let $\l\le
   \Lambda_{N,s}$. Assume that $u$ is a nonnegative function defined in $\Omega$ such
that $u\not\equiv 0$,\, $u\in L^1(\Omega)$,
$\dfrac{u}{|x|^{2s}}\in L^1(\Omega)$ and $u\geq 0\inn
\mathbb{R}^{N}\setminus\Omega$. If $u$ satisfies ${(-\Delta)^s }
u-\l\dfrac{u}{|x|^{2s}}\geq 0$ {in the weak sense} in $\Omega$,
then there exists $\delta>0$, and a constant  $C=C(N,\delta,\gamma)$ such
that for each ball $B_{\delta}(0)\subset\subset \Omega$,
$$u\geq C |x|^{-\gamma}\hbox{ in }  B_{\delta}(0),$$ where $\gamma$ is defined in
\eqref{lambda}. In particular, for $\delta$ conveniently small, we can
assume that $u>1$ in $B_{\delta}(0).$
\end{Lemma}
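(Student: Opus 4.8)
The plan is to transfer the problem to the weighted operator $L_\gamma$ via the ground state substitution and then invoke the weak Harnack inequality of Theorem \ref{harnack}. First I would set $v(x):=|x|^{\gamma}u(x)$ with $\gamma$ as in \eqref{g1}, so that, by Lemma \ref{GS} and the computation leading to \eqref{equi}--\eqref{peso1}, the hypothesis $(-\Delta)^s u-\lambda|x|^{-2s}u\ge 0$ in the weak sense translates into $L_\gamma v\ge 0$ in the weak sense of Definition \ref{super00} on every $\Omega_1\subset\subset\Omega$. The integrability assumptions $u\in L^1(\Omega)$ and $u/|x|^{2s}\in L^1(\Omega)$, together with the local boundedness away from the origin and a standard truncation/approximation argument, should guarantee that $v\in Y^{s,\gamma}_{\loc}(\Omega)$, which is exactly the regularity required to apply the Harnack machinery (and, if needed, one argues first on $u+\varepsilon$ and lets $\varepsilon\to 0$ as in the proof of Lemma \ref{lema1}).

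Next I would fix $\delta>0$ small enough that $B_{2\delta}(0)\subset\subset\Omega$ and apply Theorem \ref{harnack} (with some $q<\tfrac{N}{N-2s}$, e.g. $q=1$) on the ball $B_{\delta/2}$, or more directly Lemma \ref{tres} on an appropriate ball: since $v\gneqq 0$ in $\R^N$ (because $u\not\equiv 0$ and $u\ge 0$), we obtain
$$
0<c_0:=\Big(\frac{1}{|B_{r}|_{d\mu}}\int_{B_{r}}v^{\eta}\,d\mu\Big)^{1/\eta}\le C\inf_{B_{r}}v
$$
for a suitable radius $r$ comparable to $\delta$. In particular $\inf_{B_r}v\ge c_0/C=:c_1>0$, which unravels to $u(x)=|x|^{-\gamma}v(x)\ge c_1|x|^{-\gamma}$ for a.e. $x\in B_r$. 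Renaming the radius and the constant gives $u\ge C|x|^{-\gamma}$ in $B_\delta(0)$, and since $|x|^{-\gamma}\to\infty$ as $x\to 0$, choosing $\delta$ smaller if necessary forces $u>1$ in $B_\delta(0)$.

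The main obstacle I anticipate is the justification that $v\in Y^{s,\gamma}_{\loc}(\Omega)$ and that the weak-supersolution inequality for $u$ against the test class $\mathcal{T}$ of Definition \ref{veryweak} is genuinely equivalent to the weak-supersolution inequality for $v$ against $Y^{s,\gamma}_0(\Omega_1)$ of Definition \ref{super00}; this is where the ground state transformation must be pushed from smooth compactly supported functions (where Lemma \ref{GS} applies) to the merely $L^1$ setting, and where the singular weight $|x|^{-2s}$ near the origin has to be controlled. I would handle this by the $\varepsilon$-regularization $u_\varepsilon:=u+\varepsilon$, applying the already-established comparison and compactness tools (Lemma \ref{compar}, Lemma \ref{compag}) to pass to the limit, exactly in the spirit of the localization step used in the proof of Lemma \ref{lema1}. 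Everything else is a direct bookkeeping of constants and radii.
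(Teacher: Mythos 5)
Your proposal follows essentially the same route as the paper: pass to $v=|x|^{\gamma}u$ via the ground state transformation, observe that $L_\gamma v\ge 0$, and invoke the weak Harnack inequality (Theorem \ref{harnack}) to bound $\inf v$ from below, then unwind to obtain $u\ge C|x|^{-\gamma}$. Your identification of the regularity step $v\in Y^{s,\gamma}_{\loc}(\Omega)$ as the delicate point is apt (the paper's own proof passes over it silently), but the overall argument is the same.
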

\begin{proof}
Considering $v:=|x|^{\gamma}u$, then
$v\gneqq 0$ and it satisfies $L_{\gamma} v\ge 0$, with $L_{\gamma}$ defined in \eqref{Ltilde0}. Hence
using the weak Harnack inequality in Theorem \ref{harnack}, we
conclude that $\inf_{B_r(0)}v\ge C$. Thus $u(x)\ge C|x|^{-\gamma}$
in $B_r(0)$ and the result follows.
\end{proof}

\section{Optimal summability in the presence of Hardy
potential}\label{s4}
In this section we analyze the question of
the optimal summability of the solution to the problem
\begin{equation}\label{mainfirst}
\left\{
\begin{array}{rcl}
(-\Delta)^s u -\lambda\dfrac{u}{|x|^{2s}} &=& f \inn\Omega,\\
u&=&0\inn \mathbb{R}^{N}\setminus\Omega,
\end{array}\right.
\end{equation}
with $0<\lambda< \Lambda_{N,s}$.
\subsection{Regularity of energy solution}

Along this subsection we will assume that $f\in L^m(\Omega)$ with
$m\ge \frac{2N}{N+2s}$, and thus the solution $u$ will belong to $H_0^s(\Omega)$.

In particular, it is  known that in the classical case, i.e. when $\lambda=0$, if $m>\frac{N}{2s}$, then $u\in L^\infty(\Omega)$.
However, as a consequence of Lemma \ref{Lm:singularity_elip}, this feature is no longer true for $\lambda>0$, and actually $u(x)\ge
C|x|^{-\gamma}$ in a neighborhood of the origin.
Hence, a natural question here is whether this rate is exactly the rate of growth of $u$, and the answer is yes for regular data, as the following theorem shows.

\begin{Theorem} \label{singular}
Let $f\in L^{m}(\Omega), m>\frac{N}{2s}$. Let consider $u\in
H_{0}^{s}(\Omega)$ the unique energy solution to problem
\eqref{mainfirst}, with $\lambda\le   \Lambda_{N,s}$, then
$u(x)\leq C |x|^{-\gamma}\inn\mathbb{R}^{N}$.
\end{Theorem}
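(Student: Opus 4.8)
The plan is to work with the ground state transformation and prove an upper bound for $v(x):=|x|^{\gamma}u(x)$, namely that $v\in L^{\infty}(\Omega)$; the conclusion $u(x)\le C|x|^{-\gamma}$ then follows immediately. Since $u\in H^s_0(\Omega)$ is the energy solution, Lemma \ref{GS} (the ground state representation) gives that $v\in Y^{s,\gamma}_0(\Omega)$ solves
\begin{equation*}
L_\gamma v = |x|^{-\gamma}f(x) =: g(x) \inn\Omega,\qquad v=0\inn\ren\setminus\Omega.
\end{equation*}
First I would record that $g\in L^{\tilde m}(\Omega, d\mu)$ for a suitable exponent: since $|g|^{\tilde m}\,d\mu = |x|^{-\tilde m\gamma - 2\gamma}|f|^{\tilde m}\,dx$ and $f\in L^m$, one checks by H\"older that $g\in L^{\tilde m}(d\mu)$ provided $\tilde m$ is slightly below $m$, using $\gamma<\frac{N-2s}{2}<\frac{N}{2}$ so the weight $|x|^{-(\tilde m+2)\gamma}$ is locally integrable against $dx$ for $\tilde m$ close to $m$; in particular $\tilde m>\frac{N}{2s}$ can still be arranged. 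The point of passing to the weighted operator is that $L_\gamma$ enjoys the weighted Sobolev inequality of Proposition \ref{Sobolev00} with the same scaling exponent $2^*_s=\frac{2N}{N-2s}$ as the unweighted fractional Laplacian, so the Stampacchia machinery runs identically.

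Next I would carry out a Stampacchia-type truncation argument on $v$ directly for the weighted problem. For $k\ge 0$ use $G_k(v)=(v-k)_+\in Y^{s,\gamma}_0(\Omega)$ (this is legitimate by the analogue of Proposition \ref{prop} for the weighted operator, valid since it rests on a pointwise inequality) as a test function in \eqref{super}. The left side is bounded below by $\||G_k(v)|||^2_{Y^{s,\gamma}_0(\Omega)}$ via the analogue of estimate \eqref{g_k}, and the right side is $\int_\Omega g\,G_k(v)\,dx = \int_\Omega g|x|^{2\gamma}G_k(v)\,d\mu$. On the level set $A_k:=\{v>k\}$ apply H\"older with the weighted Sobolev embedding $Y^{s,\gamma}_0\hookrightarrow L^{2^*_s}(d\mu)$ to get, writing $|A_k|_{d\mu}$ for the weighted measure,
\begin{equation*}
\|G_k(v)\|_{L^{2^*_s}(d\mu)}^2 \le C\int_{A_k} g|x|^{2\gamma}G_k(v)\,d\mu \le C\|g|x|^{2\gamma}\|_{L^{(2^*_s)'}(A_k, d\mu)}\,\|G_k(v)\|_{L^{2^*_s}(d\mu)},
\end{equation*}
hence $\|G_k(v)\|_{L^{2^*_s}(d\mu)}\le C\|g|x|^{2\gamma}\|_{L^{(2^*_s)'}(A_k,d\mu)}$, and then a further H\"older step using $\tilde m>\frac{N}{2s}=((2^*_s)')'\cdot\text{(something)}$ — concretely $g|x|^{2\gamma}\in L^{\tilde m}(d\mu)$ with $\tilde m>\frac N{2s}$ — produces the standard inequality
\begin{equation*}
(h-k)|A_h|_{d\mu}^{1/2^*_s}\le \|G_k(v)\|_{L^{2^*_s}(d\mu)}\le C\,|A_k|_{d\mu}^{\frac{1}{(2^*_s)'}-\frac{1}{\tilde m}}
\end{equation*}
for all $h>k>0$, with the exponent $\delta := 2^*_s\big(\tfrac{1}{(2^*_s)'}-\tfrac1{\tilde m}\big)>1$ precisely because $\tilde m>\frac N{2s}$. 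Then Lemma \ref{st} applies and gives $|A_d|_{d\mu}=0$ for an explicit $d$, i.e. $v\le d$ a.e., hence $v\in L^\infty(\Omega)$.

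The main obstacle I anticipate is the bookkeeping around the weight: one must verify that the truncation functions $(v-k)_+$ are admissible test functions for the weighted operator $L_\gamma$ (the cited Proposition \ref{prop} and Picone-type facts are stated to transfer because they rest on pointwise inequalities, so I would invoke that), and, more delicately, that the datum $g=|x|^{-\gamma}f$ lands in a weighted Lebesgue space with exponent still exceeding the critical $\frac N{2s}$ for the weighted Sobolev exponent — this forces the constraint $\gamma<\frac{N-2s}2$, which is exactly guaranteed by $\lambda<\Lambda_{N,s}$ (and with the $H(\Omega)$-framework and the improved Hardy inequality one handles $\lambda=\Lambda_{N,s}$, $\gamma=\frac{N-2s}2$, at the cost of working with $W^{s,q}_0$, $q<2$, but the same iteration still closes). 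A secondary point is that the energy solution $u$ a priori lies only in $H^s_0(\Omega)$, so one should first justify that $v$ is genuinely in $Y^{s,\gamma}_0(\Omega)$ — this is exactly the content of Lemma \ref{GS}, which identifies the two energies — and that $v$ is a supersolution in the sense of Definition \ref{super00}, which is immediate from $f\ge 0$ (indeed one may take $f$ positive without loss of generality for this linear problem). Combining the lower bound $u\ge C|x|^{-\gamma}$ from Lemma \ref{Lm:singularity_elip} with the upper bound just obtained shows the rate $|x|^{-\gamma}$ is sharp.
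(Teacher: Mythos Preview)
Your approach is essentially the same as the paper's: pass to $v=|x|^{\gamma}u$ solving the weighted problem $L_\gamma v=|x|^{-\gamma}f$, test with $G_k(v)$, invoke the weighted Sobolev inequality of Proposition~\ref{Sobolev}, apply H\"older, and close via the Stampacchia iteration Lemma~\ref{st}. The structure is correct and leads to $v\in L^\infty(\Omega)$.

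Where you diverge from the paper is in the bookkeeping of the weights, and here your version is more laborious and contains some imprecision. You try to place $g=|x|^{-\gamma}f$ (or $g|x|^{2\gamma}$) in weighted spaces $L^{\tilde m}(d\mu)$ and work with the weighted level-set measure $|A_k|_{d\mu}$; the integrability check ``$|x|^{-(\tilde m+2)\gamma}$ locally integrable for $\tilde m$ close to $m$'' is not uniformly true for large $m$, and the Sobolev embedding lands in $L^{2^*_s}(|x|^{-2^*_s\gamma}dx)$, not $L^{2^*_s}(d\mu)$, so your H\"older step needs an extra comparison. The paper avoids all of this with one clean observation: since $\Omega$ is bounded, $|x|^{-\gamma}\ge c>0$, hence the weighted $L^{2^*_s}$-norm controlled by Sobolev dominates the \emph{unweighted} $\|G_k(v)\|_{L^{2^*_s}(\Omega)}$. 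After that, the right-hand side is handled by a single three-exponent H\"older with $f\in L^m(\Omega)$ directly (no $\tilde m$, no weighted datum), yielding
\[
(z-k)|A_z|^{1/2^*_s}\le C\|f\|_{L^m}|A_k|^{1-\frac{1}{2^*_s}-\frac{1}{m}},
\]
with $|A_k|$ the ordinary Lebesgue measure, and $2^*_s\big(1-\frac{1}{2^*_s}-\frac{1}{m}\big)>1$ precisely because $m>\frac{N}{2s}$. This is both shorter and free of the integrability side conditions you are wrestling with.
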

\begin{proof}
Since the problem is linear, without loss of generality we can assume $f\ge 0$.
Defining $v(x):= |x|^{\gamma} u(x)$, it can be checked that it solves
\begin{equation}\label{weightedProblem}
\left\{
\begin{array}{rcl}
L_\gamma (v) &=& |x|^{-\gamma}f \inn\Omega,\\
v&=&0\inn \mathbb{R}^{N}\setminus\Omega,
\end{array}\right.
\end{equation}
where the operator $L_\gamma (v)$ was defined by \eqref{Ltilde0}.
Consider now $G_{k}(v(x))$, specified in \eqref{Tk}, with $k>0$ as test function in {\eqref{weightedProblem}}. Hence,
$$
\frac{a_{N,s}}{2}\iint_{D_\Omega} \dfrac{{\big( v (x) - v (y) \big)\big(G_k (v (x))- G_k(v (y)) \big) } }{|x-y|^{N+2s}} \dfrac{dx}{|x|^{\gamma}} \dfrac{dy}{|y|^{\gamma}}=\int_\Omega{|x|^{-\gamma}fG_k(v)\,dx} \,.
$$
Since for any  $\sigma \in \mathbb{R}$, $ \sigma=T_k (\sigma)+G_k (\sigma)$, then
$$
\begin{array}{c}
\big( v (x) - v (y) \big)\big(G_k (v (x))- G_k(v (y)) \big)
\\
= \big(G_k (v (x))- G_k(v (y)) \big)^2  + \big(T_k (v (x))- T_k(v (y)) \big)\big(G_k (v (x))- G_k(v (y)) \big).
\end{array}$$
Moreover, by \cite[Lemma 2.5]{LPPS}, we know that
$$
\big(T_k (v (x))- T_k(v (y)) \big)\big(G_k (v (x))- G_k(v (y)) \big) \ge 0,
$$
and therefore
$$
\frac{a_{N,s}}{2}
\iint_{D_\Omega}\frac{|G_{k}(v(x))-G_{k}(v(y))|^2}{|x-y|^{N+2s}}\frac{dx}{|x|^{\gamma}}\,
\frac{dy}{|y|^{\gamma}}\leq  \dint_{\Omega}{f}
\,G_{k}(v(x))\,\dfrac{dx}{|x|^{\gamma}}.
$$
Let us denote $A_k :=\{ x \in \Omega\,: \, v(x)  \geq k\}$.
Applying the weighted Sobolev inequality (Proposition \ref{Sobolev}) in the left hand side we obtain,
$$
\mathcal{S} \| G_k (v   ) \|_{L^{2^*_s} (\Omega,|x|^{-\gamma}\,dx)}^2 \leq   \int_{A_{k}}\,{f}\,G_k (v  (x)) \frac{dx}{|x|^\gamma}
$$
and using H\"older's inequality in the right hand side,
$$
 \bigg|\int_{A_{k}}\,f\,G_k (v  (x)) \frac{dx}{|x|^\gamma}\bigg|
\leq \|f \|_{L^{m}(\Omega)} \|G_k (v  )\|_{L^{2^*_s}(\Omega,{|x|^{-\gamma}\,dx})} |A_k|^{1-\frac1{2^*_s}-\frac1{m}}.
$$
Thus we have that
$$
\| G_k (v   ) \|_{L^{2^*_s} (\Omega,|x|^{-\gamma}\,dx)} \leq \mathcal{S}^{-1} \|f \|_{L^{m}(\Omega)}  |A_k|^{1-\frac1{2^*_s}-\frac1{m}} \,.
$$
{On the other hand, since $\Omega$ is bounded, there exists a constant $c>0$ such that $\| G_k (v   ) \|_{L^{2^*_s} (\Omega,|x|^{-\gamma}\,dx)}\geq c\| G_k (v   ) \|_{L^{2^*_s} (\Omega)}$.} Moreover,  for any $z>k$, we have that $A_z \subset A_k$ and $G_k (s) \chi_{A_z} \geq (z-k)$ for every $s\in\R$ and thus
$$
(z-k) |A_z|^{\frac1{2^*_s}} \leq \frac{1}{{c \mathcal{S}}} \|f \|_{L^{m}(\Omega)}  |A_k|^{1-\frac1{2^*_s}-\frac1{m}}.
$$
Manipulating the above inequality we deduce that
$$
 |A_z| \leq \frac{\|f \|^{2^*_s}_{L^{m}(\Omega)}  |A_k|^{2^*_s (1-\frac1{2^*_s}-\frac1{m}) } }{{(c  \mathcal{S} )}^{2^*_s} (z-k)^{2^*_s}}\,.
$$
Hence we apply Lemma \ref{st} with the choice $\psi (s):= |A_s|$, using that
$$
2^*_s \Big(1-\frac1{2^*_s}-\frac1{m}\Big)>1,
$$
since $m>\frac{N}{2s}$. Consequently there exists $k_0$ such that $\psi (k) \equiv 0 $ for any $k \geq k_0$ and thus
$$\displaystyle\text{ ess }\sup_{\Omega} v  \leq k_0.$$
\end{proof}
We next try to obtain under which conditions of $\lambda$ the Calderón-Zygmund summability holds for the rest of the Lebesgue  spaces
contained in the dual of $H^s_0(\Omega)$, that is, $f\in L^m(\Omega)$ where $\frac{2N}{N+2s}\le m< \frac{N}{2s}$.
Then we have the following result.
\begin{Theorem} \label{CZ}
Let $f$ be a positive function  $f\in L^ {m}(\Omega)$, with $\frac{2N}{N+2s}\le m < \frac{N}{2s}$. If
\begin{equation}\label{condi}
\lambda<\Lambda_{N,s}\dfrac{4N(m-1)(N-2ms)}{m^2(N-2s)^2},
\end{equation}
then there exists a constant $c=c(N,m,s)>0$ such that the unique energy
solution of problem \eqref{mainfirst} verifies
\begin{equation}\label{CZbou1}
\|{u}\|_{L^{m^{**}_s}(\Omega)} \leq c\,\|{f}\|_{L^ {m}(\Omega)}
\qquad \mbox{ where } \qquad m^{**}_s=\frac{mN}{N-2ms}\, .
\end{equation}
\end{Theorem}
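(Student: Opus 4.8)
The strategy is a Stampacchia-type iteration on the superlevel sets of the transformed function $v(x) := |x|^{\gamma} u(x)$, exactly as in the proof of Theorem \ref{singular}, but now tracking the exponent in the weighted Sobolev inequality carefully since $m < \frac{N}{2s}$ forces us to stay in an $L^p$ scale rather than reaching $L^\infty$. First I would pass to the equation \eqref{weightedProblem} satisfied by $v$, namely $L_\gamma v = |x|^{-\gamma} f$ in $\Omega$, and assume $f \ge 0$ by linearity. The key identity is the ground state transformation: proving the Calder\'on--Zygmund bound \eqref{CZbou1} for $u$ is equivalent to proving the corresponding weighted bound for $v$, and the restriction on $\lambda$ should translate, via $\gamma = \frac{N-2s}{2} - \alpha$ and \eqref{lambda}, into a restriction on $\gamma$ (equivalently on $\alpha$) that is precisely what is needed for the weighted Sobolev embedding to close the iteration at the right exponent.

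\textbf{Key steps.} The plan is to:
\begin{enumerate}
\item Test \eqref{weightedProblem} with a power-type truncation, e.g. $|v|^{\beta-1} G_k(v)$ or more directly with $G_k(v)$ composed with a suitable power, to obtain an energy estimate of the form
$$
\frac{a_{N,s}}{2}\iint_{D_\Omega} \frac{\big(w(x)-w(y)\big)^2}{|x-y|^{N+2s}}\,d\nu \le C\int_\Omega f\, |v|^{\beta-1} G_k(v)\,\frac{dx}{|x|^\gamma},
$$
where $w$ is the appropriate power of $v$ truncated at level $k$; the sign condition from \cite[Lemma 2.5]{LPPS} (together with Lemma \ref{alg}) guarantees the crossed truncation terms are nonnegative.
\item Apply the weighted Sobolev inequality (Proposition \ref{Sobolev00}) on the left to gain integrability in $L^{2^*_s}(\Omega, |x|^{-2^*_s\gamma}dx)$, then Hölder on the right with $f \in L^m$.
\item Convert the weighted norms of $v$ back to unweighted norms of $u$ using $u = |x|^{-\gamma} v$: here is where the explicit inequality between $\gamma$ and the numbers $N$, $m$, $s$ enters, because $\int |x|^{-q\gamma}$-type integrals near the origin converge only under a dimensional threshold, and the power $q$ at which they converge is exactly governed by \eqref{condi}.
\item Iterate: derive a recursive inequality $|A_z| \le M |A_k|^{\delta}/(z-k)^{\theta}$ for the (weighted or unweighted) superlevel sets with $\delta$ chosen so that, \emph{after bootstrapping finitely many times}, the gain exponent $\delta \theta$ exceeds $1$ and one lands exactly at the exponent $m^{**}_s = \frac{mN}{N-2ms}$. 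Finally invoke Lemma \ref{st} (or rather a quantitative variant giving an $L^p$ bound rather than $L^\infty$) to conclude $\|u\|_{L^{m^{**}_s}} \le c\|f\|_{L^m}$.
\end{enumerate}

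\textbf{Main obstacle.} The delicate point will be step 3, the weight bookkeeping: one must check that the algebraic condition \eqref{condi} on $\lambda$ is \emph{exactly} equivalent to the condition $q\gamma < N$ (or the analogous strict inequality) needed to pass from the weighted Lebesgue norm $\||x|^{-\gamma}v\|$ to $\|u\|$ at the critical exponent $m^{**}_s$, using that $\lambda = \lambda(\alpha)$ is decreasing in $\alpha$ and $\gamma$ is decreasing in $\alpha$. Concretely, $m^{**}_s \gamma < N$ should rearrange, via $\gamma = \frac{N-2s}{2}-\alpha$ and the two-sided bound $0 < \gamma \le \frac{N-2s}{2}$, into $\lambda < \Lambda_{N,s}\frac{4N(m-1)(N-2ms)}{m^2(N-2s)^2}$, and verifying this identity (using the explicit form of $\Lambda_{N,s}$ in \eqref{cteHardy} and of $\lambda(\alpha)$ in \eqref{lambda}, together with the limiting behaviours noted after Lemma \ref{singularity}) is the computational heart of the argument. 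A secondary subtlety is that, unlike in Theorem \ref{singular}, the Stampacchia lemma as stated (Lemma \ref{st}) only yields $L^\infty$ bounds; so I expect the iteration to be run instead as a Moser-type finite bootstrap on increasing integrability exponents $p_0 < p_1 < \cdots \to m^{**}_s$, using Proposition \ref{prop} and Proposition \ref{Sobolev00} at each stage, and stopping exactly when the target exponent is reached (it is finite precisely because $m < \frac{N}{2s}$).
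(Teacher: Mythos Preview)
Your plan goes through the ground state transformation $v=|x|^{\gamma}u$ and then tries to run a weighted Stampacchia/Moser argument. The paper does \emph{not} do this: it works directly with $u$, tests the equation with the power $u_k^{\beta}$ where $\beta=\dfrac{N(m-1)}{N-2ms}$, uses the algebraic inequality of Lemma~\ref{alg} to get
\[
(u_k(x)-u_k(y))(u_k^{\beta}(x)-u_k^{\beta}(y))\ge \frac{4\beta}{(\beta+1)^2}\big(u_k^{\frac{\beta+1}{2}}(x)-u_k^{\frac{\beta+1}{2}}(y)\big)^2,
\]
and then absorbs the Hardy term via the Hardy inequality applied to $u_k^{\frac{\beta+1}{2}}$. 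The whole point is that the coefficient in front becomes $\frac{4\beta}{(\beta+1)^2}-\frac{\lambda}{\Lambda_{N,s}}$, and one checks directly that for this value of $\beta$
\[
\frac{4\beta}{(\beta+1)^2}=\frac{4N(m-1)(N-2ms)}{m^2(N-2s)^2},
\]
so \eqref{condi} is exactly the positivity of this coefficient. One application of H\"older (with $\beta m'=\frac{(\beta+1)}{2}2^*_s$) and the unweighted Sobolev inequality then closes the estimate in a single step; no iteration or bootstrap is needed.

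Your approach has a concrete gap. The central computational claim --- that $m^{**}_s\gamma<N$ rearranges into \eqref{condi} --- is false. Writing $\gamma=\frac{N-2s}{2}-\alpha$, the condition $m^{**}_s\gamma<N$ is equivalent to $\alpha>\frac{N+2s}{2}-\frac{N}{m}=\alpha_0(m)$, hence (since $\lambda(\alpha)$ is decreasing) to $\lambda<\lambda(\alpha_0(m))=P_s(m)$ in the notation of \eqref{Psm}. The paper's own Lemma~\ref{opt3} proves that $J_s(m)\le P_s(m)$ with equality \emph{only} at $m=\frac{2N}{N+2s}$; for every other $m$ in the range the two thresholds are strictly different. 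So your weight condition does not recover \eqref{condi}, and the ``verification'' you flag as the computational heart cannot succeed. There is also a secondary mismatch: the weighted Sobolev inequality (Proposition~\ref{Sobolev00}) controls $\int v^{2^*_s}|x|^{-2^*_s\gamma}dx$, whereas $\|u\|_{L^{m^{**}_s}}^{m^{**}_s}=\int v^{m^{**}_s}|x|^{-m^{**}_s\gamma}dx$, and for $m>\frac{2N}{N+2s}$ these weights differ, so even a correctly executed Moser step in the weighted problem does not land on the norm you want. The direct route through $u^{\beta}$ and the Hardy inequality is what makes the threshold $J_s(m)$ appear.
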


\begin{proof} Since $f\in H^{-s}(\Omega)$, the existence and uniqueness of an energy solution can be proved by means of a direct abstract Hilbert space approach.

To study the regularity of the solution, for every $k\in\mathbb{N}$, we consider $u_k\in  L^{\infty}(\ren)\cap H_{0}^{s}(\Omega)$,
the solution to the following approximated problem
\begin{equation}\label{eq:pk}
\left\{\begin{array}{rcll}
(-\Delta)^s u_k-\lambda \dfrac{u_{k-1}}{|x|^{2s}+\frac 1k}&=&f_k(x) &\hbox{  in } \Omega,\\
u_k&=&0 &\hbox{  in } \mathbb{R}^N\setminus\Omega,\\
\end{array}
\right.
\end{equation}
where $f_k(x):=\min\{ f(x), k\}$ and $u_{0}=0$.

In this way we  obtain the following properties: $(i)$ $\{u_k\}$ is an increasing sequence;
 $(ii)$ each $u_k$ is bounded, and
$(iii)$ $u_k\to u$, the unique solution to problem \eqref{prob}, in $L^p(\Omega)$, for every $1\leq p \le 2^*_s$.

Define $\beta=\dfrac{2^*_s}{2m'-2^*_s}=\dfrac{N(m-1)}{N-2ms}\geq 1$,  that satisfies
$\beta m'=\frac{(\beta+1)}{2}{2^*_s}$.
Since $u_k$ is bounded we can take $u_k^\beta$ as a test function in
\eqref{mainfirst}, obtaining
\begin{eqnarray*}
\frac{a_{N,s}}{2}\iint_{D_\Omega}\dfrac{(u_k(x)-u_k(y))(u_k^{\beta}(x)-u_k^{\beta}(y))}{|x-y|^{N+2s}}\,dx\,dy\le\lambda
\int_{\Omega}\dfrac{u_k^{\beta+1}}{|x|^{2s}}dx+ \int_{\Omega} f_k
u_k^{\beta} dx.
\end{eqnarray*}
By H\"older's inequality,
$$
\int_{\Omega} f_k u_k^{\beta} dx\le
\|f_k\|_{L^m(\Omega)}\Big(\int_{\Omega}u_k^{m'\beta}
dx\Big)^{\frac{1}{m'}}\le
\|f\|_{L^m(\Omega)}\Big(\int_{\Omega}u_k^{\frac{(\beta+1)2^*_s}{2}}
dx\Big)^{\frac{\beta}{\beta+1}\frac{2}{2^*_s}}.
$$
Now, by the algebraic inequality in \eqref{alg0}, we get
$$
(u_k(x)-u_k(y))(u_k^{\beta}(x)-u_k^{\beta}(y))\ge
\frac{4\beta}{(\beta+1)^2}(u_k^{\frac{\beta+1}{2}}(x)-u_k^{\frac{\beta+1}{2}}(y))^2,
$$
and hence, using Hardy's inequality again,  we conclude that
$$
\frac{a_{N,s}}{2}\Big(\frac{4\beta}{(\beta+1)^2}-\frac{\lambda}{\Lambda_{N,s}}\Big)
\iint_{D_\Omega}\dfrac{(u_k^{\frac{\beta+1}{2}}(x)-u_k^{\frac{\beta+1}{2}}(y))^2}{|x-y|^{N+2s}}
\,dx\,dy\le
\|f\|_{L^m(\Omega)}\Big(\int_{\Omega}u_k^{\frac{(\beta+1)2^*_s}{2}}
dx\Big)^{\frac{\beta}{\beta+1}\frac{2}{2^*_s}}.$$
On the other hand, hypothesis \eqref{condi} is equivalent to
$$\Big(\frac{4\beta}{(\beta+1)^2}-\frac{\lambda}{\Lambda_{N,s}}\Big)>0,$$
and thus, by the Sobolev inequality, we reach that
$$
\Big(\int_{\Omega}u_k^{\frac{(\beta+1)2^*_s}{2}}
dx\Big)^{\frac{2}{2^*_s(\beta+1)}}\le C \|f\|_{L^m(\Omega)}.
$$
Furthermore, $\frac{(\beta+1)2^*_s}{2}=\dfrac{mN}{N-2sm}=
m_{s}^{**}$, and therefore passing to the limit we conclude.
\end{proof}

\begin{remark} {Notice that, making $s\rightarrow 1$, the condition over $\lambda$ becomes
$$
\lambda<\dfrac{N(m-1)(N-2m)}{m^2},$$
the curve obtained in \cite{BOP} for  the  local  case. }
\end{remark}
\subsection{About the optimality of the regularity results.}

For simplicity of typing we set
\begin{equation}\label{Jsm}
J_s(m):=\Lambda_{N,s}\dfrac{4N(m-1)(N-2ms)}{m^2(N-2s)^2}.
\end{equation}
In \cite{BOP}, the authors proved that in the local case
condition \eqref{condi} (with $s=1$) is optimal. In particular, they see  that if $\lambda>J_1(m)$ and $\Omega=B_1(0)$, there
exists a suitable radial function $f\in L^m(\Omega)$ such that the
solution $u$ does not belong to $L^{m^{**}}(\Omega)$.

In the nonlocal case the situation is more delicate. Indeed, we will see that in this case the previous example does not provide the optimality of the curve $J_s(m)$. It proves that the $m_s^{**}$ summability does not hold above a curve, that we will call $P_s(m)$, that is in general above of $J_s(m)$. Thus, as far as we know, the optimality of $J_s(m)$ for every $\frac{2N}{N+2s}\leq m <\frac{N}{2s}$ remains open.

In order to define such curve $P_s(m)$, let us first recall  that for all $\lambda\in (0,\Lambda_{N,s}]$ there exists a
unique nonnegative constant $\alpha\in [0,\frac{N-2s}{2})$ such
that
\begin{equation}\label{lambda11}
\lambda=\lambda(\alpha)=\dfrac{2^{2s}\,\Gamma(\frac{N+2s+2\alpha}{4})\Gamma(\frac{N+2s-2\alpha}{4})}{\Gamma(\frac{N-2s+2\alpha}{4})\Gamma(\frac{N-2s-2\alpha}{4})},
\end{equation}
and $\lambda(\alpha)$ is a decreasing function of $\alpha$ (see \cite{AMPP}). Hence, for $m\in [\frac{2N}{N+2s}, \frac{N}{2s})$ fixed, we consider
\begin{equation}\label{Psm}
\alpha_0(m):=\frac{N+2s}{2}-\frac{N}{m}\qquad \hbox{ and }\qquad P_s(m):=\lambda(\alpha_0(m))
\end{equation}
given by \eqref{lambda11}.
Then we have the next result.
\begin{Lemma}\label{opt1}
Assume that  $f(x)=\dfrac{1}{|x|^{\nu}}$ with
$\nu=\dfrac{N-\e}{m}$, for some $\e>0$. Let $\l_1\in
(0,\Lambda_{N,s}]$ be such that $\lambda_1\ge P_s(m)$. If $u$ is
the unique solution of \eqref{mainfirst} with $\lambda=\lambda_1$,
then $u\notin L^{m^{**}_s}(B_1(0))$.
\end{Lemma}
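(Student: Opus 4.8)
To prove Lemma~\ref{opt1} the plan is to produce a lower bound for the solution $u$ near the origin which is too singular to belong to $L^{m^{**}_s}$, and all the analytic input for this is already available in the weak Harnack inequality of Section~\ref{HH}, packaged in the convenient form of Lemma~\ref{Lm:singularity_elip}. First I would check that the datum is admissible and that $u$ is a genuine supersolution: since $\nu m=N-\e<N$, the function $f(x)=|x|^{-\nu}$ lies in $L^m(\Omega)$ and $f\gneqq 0$; as $m\ge \frac{2N}{N+2s}$ we have $f\in H^{-s}(\Omega)$, so \eqref{mainfirst} with $\lambda=\lambda_1$ has a unique finite-energy solution $u$ (belonging to $H(\Omega)$ when $\lambda_1=\Lambda_{N,s}$), with $u>0$ in $\Omega$. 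Because $f\ge 0$, $u$ is in particular a nonnegative weak supersolution of $(-\Delta)^s u-\lambda_1 u/|x|^{2s}\ge 0$ in $\Omega$; the remaining hypotheses of Lemma~\ref{Lm:singularity_elip}, namely $u\in L^1(\Omega)$ and $u/|x|^{2s}\in L^1(\Omega)$, follow from $u\in H^s_0(\Omega)$ together with Hardy's inequality \eqref{Hardy} (which gives $|x|^{-s}u\in L^2(\Omega)$) and $|x|^{-s}\in L^2(\Omega)$ (using $2s<N$).

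Granting this, Lemma~\ref{Lm:singularity_elip} provides $\delta>0$ with $B_\delta(0)\subset\subset\Omega$ and a constant $C>0$ such that $u(x)\ge C|x|^{-\gamma}$ on $B_\delta(0)$, where $\gamma=\frac{N-2s}{2}-\alpha_1$ and $\alpha_1\in[0,\frac{N-2s}{2})$ is the exponent associated with $\lambda_1$ through \eqref{lambda11} (equivalently \eqref{lambda} and \eqref{g1}). The key step is then the purely arithmetic observation that the assumption $\lambda_1\ge P_s(m)$ forces $\gamma$ to be at least $N/m^{**}_s$: indeed $\lambda(\cdot)$ is decreasing on $[0,\frac{N-2s}{2})$ and $P_s(m)=\lambda(\alpha_0(m))$ with $\alpha_0(m)=\frac{N+2s}{2}-\frac{N}{m}$, so $\lambda_1\ge P_s(m)$ yields $\alpha_1\le \alpha_0(m)$, and hence
$$
\gamma=\frac{N-2s}{2}-\alpha_1\ge \frac{N-2s}{2}-\alpha_0(m)=\frac{N}{m}-2s=\frac{N-2ms}{m}=\frac{N}{m^{**}_s},
$$
that is $\gamma\,m^{**}_s\ge N$. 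Shrinking $\delta$ if necessary so that $B_\delta(0)\subset B_1(0)$, we conclude, since $|x|^{-\gamma m^{**}_s}$ is not integrable near the origin, that
$$
\int_{B_1(0)}u^{m^{**}_s}\,dx\ge \int_{B_\delta(0)}u^{m^{**}_s}\,dx\ge C^{m^{**}_s}\int_{B_\delta(0)}|x|^{-\gamma m^{**}_s}\,dx=+\infty,
$$
so $u\notin L^{m^{**}_s}(B_1(0))$.

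I do not expect a serious obstacle here, since the hard work is all contained in the weak Harnack inequality, already proved. The two points that deserve a little care are: (a) justifying that the energy (or, when $\lambda_1=\Lambda_{N,s}$, the $H(\Omega)$-) solution really is a weak supersolution in the sense required by Lemma~\ref{Lm:singularity_elip}, in particular the integrability $u/|x|^{2s}\in L^1(\Omega)$; and (b) keeping the monotonicity of $\lambda(\alpha)$ oriented correctly, so that $\lambda_1\ge P_s(m)$ translates into $\alpha_1\le\alpha_0(m)$ and hence $\gamma$ \emph{large}. It is also worth recording the boundary case $m=\frac{2N}{N+2s}$: there $\alpha_0(m)=0$ and $P_s(m)=\Lambda_{N,s}$, so necessarily $\lambda_1=\Lambda_{N,s}$ and $\gamma=\frac{N-2s}{2}$, which still satisfies $\gamma\,m^{**}_s=N$; and note that $\e>0$ enters only to keep $f$ inside $L^m(\Omega)$ (excluding the borderline $\nu=N/m$), the precise strength of $f$ being otherwise irrelevant, as the singularity responsible for the failure of summability is produced entirely by the Hardy potential.
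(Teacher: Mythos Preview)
Your argument is correct, but it takes a genuinely different route from the paper's. The paper does not invoke the weak Harnack machinery here at all: instead it writes down an explicit comparison function
\[
v(x)=C\Big(|x|^{-\gamma}-|x|^{-(\nu-2s)}\Big),
\]
checks (using the formulas of Lemma~\ref{singularity} for radial powers) that for a suitable constant $C$ this $v$ solves $(-\Delta)^s v-\lambda_1 v/|x|^{2s}=f$ in $B_1(0)$, observes that $v\le 0$ in $\ren\setminus B_1(0)$, and concludes $v\le u$ by comparison. Since $\gamma\ge \frac{N}{m}-2s$ exactly as in your computation, $v\notin L^{m^{**}_s}(B_1(0))$ and hence neither is $u$.

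The trade-off is clear. The paper's argument is self-contained within Section~\ref{s4} and avoids the substantial analytic work of Section~\ref{HH}; on the other hand it genuinely uses the special form of $f$ to build $v$, and this explicit $v$ is reused in Proposition~\ref{propA0}. Your route, by contrast, spends the weak Harnack inequality (via Lemma~\ref{Lm:singularity_elip}) but then proves something strictly stronger: as you correctly observe, the particular datum $f=|x|^{-\nu}$ is irrelevant, and the conclusion $u\notin L^{m^{**}_s}(B_1(0))$ holds for \emph{any} $0\lneqq f\in L^m(\Omega)$ once $\lambda_1\ge P_s(m)$. Your verification of the hypotheses of Lemma~\ref{Lm:singularity_elip} (in particular $u/|x|^{2s}\in L^1$ via Hardy and Cauchy--Schwarz) is fine; the only place to be a little careful, which you already flag, is the borderline $\lambda_1=\Lambda_{N,s}$, where one works in $H(\Omega)$ rather than $H^s_0(\Omega)$.
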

\begin{proof} Notice that $f\in L^m(B_1(0))$.
From \eqref{lambda11} we know that $\l_1=\l(\alpha_1)$ for some
$\alpha_1\in [0,\frac{N-2s}{2})$. Since $\lambda_1\ge
P_s(m)$, using the fact that $\l(\alpha)$ is a decreasing
function we reach that
$$\alpha_1\le
\alpha_0(m)=\frac{N+2s}{2}-\frac{N}{m}.$$
Define
\begin{equation}\label{vvv}
v(x):=C\left(\frac{1}{|x|^{\gamma}}-\frac{1}{|x|^{\nu-2s}}\right)
\end{equation}
where $\gamma=\frac{N-2s}{2}-\alpha_1$. Since $\gamma\ge \frac{N}{m}-2s$, using the fact that
$\nu<\frac{N}{m}$ it follows that $\gamma>\nu-2s$, and thus $v\ge 0$
in $B_1(0)$. Hence, choosing a suitable positive constant $C$, we
reach that
$$
(-\Delta)^s v -\lambda_1\dfrac{v}{|x|^{2s}}=f \inn B_1(0).
$$
Since $v\le 0$ in $\mathbb{R}^N\backslash B_1(0)$, by
comparison it follows that $v\le u$, where $u$ is the
unique solution of \eqref{mainfirst} for $\lambda=\lambda_1$. Since $\gamma\ge
\frac{N}{m}-2s$, then $v\notin L^{m^{**}_s}(B_1(0))$ and we
conclude.
\end{proof}
As a direct application of this lemma we can prove the optimality of the curve $J_s(m)$ in a particular case.
\begin{Lemma}\label{opt02}
Assume that the hypotheses of Lemma \ref{opt1} hold and let $u$ be the solution of problem \eqref{mainfirst}. If $m=\frac{2N}{N+2s}$ and $\lambda\ge J_{s}(m)$, then
$u\notin L^{m^{**}_s}(B_1(0))$.
\end{Lemma}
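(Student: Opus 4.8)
The plan is to observe that at the endpoint exponent $m=\frac{2N}{N+2s}$ the two curves $J_s(m)$ and $P_s(m)$ actually coincide and both equal $\Lambda_{N,s}$, so that the conclusion becomes a direct application of Lemma \ref{opt1}. There is no genuinely new ingredient: everything reduces to two short computations followed by an invocation of the already proved nonexistence result.

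First I would compute $\alpha_0(m)$ for $m=\frac{2N}{N+2s}$. By \eqref{Psm}, $\alpha_0(m)=\frac{N+2s}{2}-\frac Nm$, and for this value of $m$ one has $\frac Nm=\frac{N+2s}{2}$, hence $\alpha_0(m)=0$. Therefore $P_s(m)=\lambda(\alpha_0(m))=\lambda(0)=\Lambda_{N,s}$, using \eqref{lambda11} together with the normalization of $\lambda(\cdot)$ recalled after \eqref{cteHardy}. Next I would check, by a direct algebraic simplification, that $J_s(m)=\Lambda_{N,s}$ for the same $m$. Writing $m-1=\frac{N-2s}{N+2s}$, $N-2ms=\frac{N(N-2s)}{N+2s}$ and $m^2=\frac{4N^2}{(N+2s)^2}$, one obtains from \eqref{Jsm}
$$
\frac{4N(m-1)(N-2ms)}{m^2(N-2s)^2}
=\frac{4N\cdot\frac{N-2s}{N+2s}\cdot\frac{N(N-2s)}{N+2s}}{\frac{4N^2}{(N+2s)^2}\,(N-2s)^2}=1,
$$
so that $J_s(m)=\Lambda_{N,s}=P_s(m)$.

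With these identities in hand, the proof concludes quickly. Since we always assume $0<\lambda\le\Lambda_{N,s}$, the hypothesis $\lambda\ge J_s(m)=\Lambda_{N,s}$ forces $\lambda=\Lambda_{N,s}=P_s(m)$; in particular $\lambda\ge P_s(m)$ and $\lambda\in(0,\Lambda_{N,s}]$. Hence Lemma \ref{opt1}, applied with this $\lambda$ and the datum $f(x)=|x|^{-\nu}$, $\nu=\frac{N-\e}{m}$, yields $u\notin L^{m^{**}_s}(B_1(0))$, which is exactly the claim. (For a sanity check one notes that here $\gamma=\frac{N-2s}{2}=\frac Nm-2s$ and $m^{**}_s=2^*_s$, so $\gamma m^{**}_s=N$ and $|x|^{-\gamma}\notin L^{m^{**}_s}(B_1(0))$, consistent with the borderline case covered by Lemma \ref{opt1}.)

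I do not expect a serious obstacle: the whole argument rests on the numerical identity $J_s\big(\tfrac{2N}{N+2s}\big)=\Lambda_{N,s}$ and $\alpha_0\big(\tfrac{2N}{N+2s}\big)=0$. The only point deserving a word of care is that, when $\lambda=\Lambda_{N,s}$, the solution $u$ is to be understood in the space $H(\Omega)$ rather than in $H^s_0(\Omega)$; but the comparison argument used in the proof of Lemma \ref{opt1} only relies on pointwise inequalities and on $v\le 0$ outside $B_1(0)$, so it is insensitive to this change of functional setting and the conclusion carries over unchanged.
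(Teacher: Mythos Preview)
Your proof is correct and follows essentially the same approach as the paper: both arguments establish that $J_s(m)=P_s(m)$ at $m=\frac{2N}{N+2s}$ and then invoke Lemma \ref{opt1}. Your version is slightly more explicit in that you compute the common value to be $\Lambda_{N,s}$ (via $\alpha_0(m)=0$ and a direct simplification of $J_s(m)$), whereas the paper simply asserts the equality $J_s(m)=P_s(m)$ after rewriting $P_s(m)$ in terms of Gamma functions; this is a cosmetic difference, not a substantive one.
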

\begin{proof}
Notice that $P_s(m)$, defined in \eqref{Psm}, can be rewritten as
\begin{equation}\label{Psm2}
P_s(m)=\dfrac{2^{2s}\,\Gamma(\frac{N+2s}{2}-\frac{N}{2m})\Gamma(\frac{N}{2m})}{\Gamma(\frac{N}{2}-\frac{N}{2m})\Gamma(\frac{N}{2m}-s)},
\end{equation}
and in the particular case of $m=\frac{2N}{N+2s}$, it satisfies
$$J_s(m)=P_s(m). $$
Hence $\l\ge P_s(m)$ by hypothesis, and we conclude applying Lemma \ref{opt1}.
\end{proof}
\begin{remark}
Notice that in the local case, $s=1$,
$\alpha=\sqrt{\Lambda_{N,1}-\lambda}$ and $P_1(m)=J_{1}(m)$ for
all $m\in [\frac{2N}{N+2},\frac{N}{2})$, and thus Lemma \ref{opt1} holds in the whole range.
\end{remark}
Next we show that for radial functions the result in Lemma \ref{opt1} cannot be improved. In other words the optimality of the curve $J_s(m)$ cannot be proved with radial functions.We start  by proving the following result (for the properties of the Gamma function we refer to \cite{Abra}).
\begin{Lemma}\label{opt3}
Assume that $s\in (0,1)$ and $m\in [\frac{2N}{N+2s},\frac{N}{2s})$. Then
\begin{equation}\label{op0}
J_{s}(m)\le P_s(m)
\end{equation}
and equality holds in \eqref{op0} if and only if $m=\frac{2N}{N+2s}$.
\end{Lemma}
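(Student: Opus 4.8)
The inequality \eqref{op0} relates two explicit functions of $m$ for $s$ and $N$ fixed: the rational function $J_s(m)$ defined in \eqref{Jsm} and the ratio of Gamma functions $P_s(m)$ obtained by plugging $\alpha_0(m)=\frac{N+2s}{2}-\frac{N}{m}$ into \eqref{lambda11}, which we already rewrote in the closed form \eqref{Psm2}. Both $J_s(m)$ and $P_s(m)$ carry the common factor $\Lambda_{N,s}=2^{2s}\frac{\Gamma^2(\frac{N+2s}{4})}{\Gamma^2(\frac{N-2s}{4})}$ (explicitly for $J_s$, and after using the $\beta$-function/duplication identities for $P_s$), so it is natural to divide out $\Lambda_{N,s}$ and compare
$$
\frac{J_s(m)}{\Lambda_{N,s}}=\frac{4N(m-1)(N-2ms)}{m^2(N-2s)^2}
\qquad\text{with}\qquad
\frac{P_s(m)}{\Lambda_{N,s}}.
$$
The first step is therefore to write $P_s(m)/\Lambda_{N,s}$ as a clean quotient of four Gamma functions using \eqref{Psm2}, and to record that at the left endpoint $m=\frac{2N}{N+2s}$ we have $\alpha_0(m)=0$, hence $P_s(m)=\lambda(0)=\Lambda_{N,s}$ and also $J_s(m)=\Lambda_{N,s}$ by the computation in Lemma \ref{opt02}; this already gives equality at the endpoint.

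**Reduction via a change of variable.** Rather than keep $m$, I would substitute $t:=\alpha_0(m)=\frac{N+2s}{2}-\frac{N}{m}$, so that $m=\frac{N}{\frac{N+2s}{2}-t}$ and $t$ ranges over $[0,\frac{N-2s}{2})$ as $m$ ranges over $[\frac{2N}{N+2s},\frac{N}{2s})$. Under this substitution $P_s$ becomes exactly $\lambda(t)$ from \eqref{lambda11}, while $J_s$ becomes a rational function $R(t)$ of $t$ (after simplifying $m-1$, $N-2ms$ and $m^2(N-2s)^2$ in terms of $t$). The claim \eqref{op0} is then equivalent to
$$
R(t)\le \lambda(t)\qquad\text{for all }t\in\Big[0,\tfrac{N-2s}{2}\Big),
$$
with equality only at $t=0$. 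Both sides equal $\Lambda_{N,s}$ at $t=0$. The natural strategy is to compare the two sides through their logarithmic derivatives: using the product form $\lambda(t)=m_t\,m_{-t}$ from the Remark after Lemma \ref{singularity}, $\frac{d}{dt}\log\lambda(t)=\frac12\big[\psi(\tfrac{N+2s+2t}{4})-\psi(\tfrac{N-2s-2t}{4})\big]-\frac12\big[\psi(\tfrac{N+2s-2t}{4})-\psi(\tfrac{N-2s+2t}{4})\big]$, where $\psi=\Gamma'/\Gamma$. Monotonicity and convexity of $\psi$ on $(0,\infty)$ then show $\log\lambda(t)$ is decreasing and, more precisely, that $-\log\lambda(t)$ is convex, so that $\lambda(t)\ge \Lambda_{N,s}\exp(-c t^2+\cdots)$ near $0$; on the other side $R(t)$ is a rational function with $R(0)=\Lambda_{N,s}$ and a matching first-order behaviour, and one checks $R'(0)=\lambda'(0)$ (both vanish, since both curves are symmetric in $\alpha\leftrightarrow-\alpha$ up to the reparametrisation). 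The inequality then follows by comparing second-order terms and confirming the sign is preserved on the whole interval; the uniqueness of equality at $m=\frac{2N}{N+2s}$ is the statement that the inequality at second order is strict for $t\ne0$.

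**Main obstacle.** The delicate point is not the endpoint behaviour but establishing $R(t)\le\lambda(t)$ \emph{globally} on $[0,\frac{N-2s}{2})$: $\lambda(t)\to 0$ as $t\to\frac{N-2s}{2}$ because of the pole of $\Gamma(\frac{N-2s-2t}{4})$, while $R(t)$ is a finite rational function there, so near the right endpoint the inequality is easy; the tension is in the interior, where one must rule out a crossing. I expect to handle this by showing $h(t):=\log\lambda(t)-\log R(t)$ satisfies $h(0)=0$, $h'(0)=0$, and $h''(t)\ge0$ on the whole interval (equivalently, $-\log\lambda$ is "more convex" than $-\log R$), which forces $h\ge0$. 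Verifying $h''\ge0$ reduces, via the series $\psi'(z)=\sum_{k\ge0}(z+k)^{-2}$, to an elementary but somewhat involved inequality between a sum of reciprocal squares and the second derivative of $\log$ of a rational function; this is the one genuinely computational step, and it is where the hypothesis $s\in(0,1)$ and $m<\frac{N}{2s}$ (i.e. $t<\frac{N-2s}{2}$) enter to keep all Gamma arguments positive. An alternative, if the convexity estimate proves unwieldy, is to clear denominators and reduce \eqref{op0} to a polynomial inequality in the Gamma-function arguments, then invoke the Gautschi–Kershaw type bounds on ratios $\Gamma(x+a)/\Gamma(x+b)$ to pinch $P_s(m)$ from below by exactly the rational expression $J_s(m)$; I would mention \cite{Abra} for the requisite Gamma-function inequalities as already flagged in the statement.
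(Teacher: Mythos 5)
Your plan is structurally sound and takes a genuinely different route from the paper, which proceeds by rewriting \eqref{op0} as $D(m)\ge\Theta(N,s)$ with $D=D_1 D_2$ as in \eqref{TT10} and then showing $D'(m)>0$ through a fairly laborious manipulation of the digamma series. Your idea---substitute $t=\alpha_0(m)$, observe that $R(t):=J_s(m(t))=\Lambda_{N,s}\bigl(1-\tfrac{4t^2}{(N-2s)^2}\bigr)$ so that both $R$ and $\lambda$ are even with $R(0)=\lambda(0)=\Lambda_{N,s}$, and then conclude from $h(0)=h'(0)=0$ and $h''\ge0$ that $h=\log\lambda-\log R\ge0$---is a legitimate alternative, and in fact the convexity step you defer is cleaner than you seem to expect. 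Writing $a=\tfrac{N-2s}{2}$, $b=\tfrac{N+2s}{4}$, $c=\tfrac{N-2s}{4}=a/2$, one has
$$(\log R)''(t)=-\frac{1}{(a-t)^2}-\frac{1}{(a+t)^2}=-\frac14\frac{1}{(c-\frac t2)^2}-\frac14\frac{1}{(c+\frac t2)^2},$$
while $(\log\lambda)''(t)=\tfrac14\bigl[\psi'(b+\tfrac t2)+\psi'(b-\tfrac t2)-\psi'(c+\tfrac t2)-\psi'(c-\tfrac t2)\bigr]$. Using the recurrence $\psi'(z)-z^{-2}=\psi'(z+1)$, the inequality $h''(t)\ge0$ collapses to
$$\psi'\Bigl(b+\tfrac t2\Bigr)+\psi'\Bigl(b-\tfrac t2\Bigr)\ge\psi'\Bigl(c+1+\tfrac t2\Bigr)+\psi'\Bigl(c+1-\tfrac t2\Bigr),$$
which holds termwise because $\psi'$ is decreasing and $b\le c+1$ is exactly $s\le1$; strict inequality for $t\ne0$ then gives the equality case. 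This is where the hypothesis $s\in(0,1)$ enters, rather than in keeping the Gamma arguments positive as you suggested.

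That said, two points in your write-up need correcting. First, your remark that "near the right endpoint the inequality is easy because $\lambda(t)\to0$ while $R(t)$ is finite" misreads the structure: $R(t)=\Lambda_{N,s}\bigl(1-\tfrac{4t^2}{(N-2s)^2}\bigr)$ also vanishes at $t=\tfrac{N-2s}{2}$, so both sides tend to zero and a comparison of rates would be required there; this is not load-bearing once global convexity is established, but the stated reasoning is wrong. Second, and more importantly, you never actually verify $h''\ge0$ and you hedge by offering Gautschi--Kershaw bounds as a fallback; as written the proposal is a plan rather than a proof. It happens that the plan succeeds (and more simply than you feared), but as submitted the key step is missing, so you should either carry out the digamma computation above or fall back to the paper's $D'(m)>0$ argument.
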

\begin{proof}
Using \eqref{Psm2} and the definition of $\Lambda_{N,s}$ (see \eqref{cteHardy}), it easily follows that
\eqref{op0} is equivalent to
\begin{equation}\label{TT10}
D(m):=
\dfrac{m^2}{(m-1)(N-2sm)}\dfrac{\Gamma(\frac{N+2s}{2}-\frac{N}{2m})\Gamma(\frac{N}{2m})}{\Gamma(\frac{N}{2}-\frac{N}{2m})\Gamma(\frac{N}{2m}-s)}\ge
\dfrac{4N}{(N-2s)^2}\dfrac{\Gamma^2(\frac{N+2s}{4})}{\Gamma^2(\frac{N-2s}{4})}=:\Theta(N,s).
\end{equation}
Furthermore, as we saw in the proof of Lemma \ref{opt02},
$$
D\left(\frac{2N}{N+2s}\right)=\Theta(N,s),
$$
and thus \eqref{TT10} holds if we prove that $D$ is an increasing function. Let us denote $D(m)=D_1(m)D_2(m)$ where
$$
D_1(m):=\dfrac{\Gamma(\frac{N+2s}{2}-\frac{N}{2m})\Gamma(\frac{N}{2m})}{\Gamma(\frac{N}{2}-\frac{N}{2m})\Gamma(\frac{N}{2m}-s)}\qquad\mbox{
and   } \qquad D_2(m):= \dfrac{m^2}{(m-1)(N-2sm)}.
$$
On the other hand, it is known that for $t>0$ there holds $\Gamma'(t)=\psi(t)\Gamma(t)$ where $\psi(t)$, the so called Digamma function, is given by
$$
\psi(t):=-\frac{1}{t}-C_0+t\sum_{n=1}^\infty\frac{1}{n(n+t)},
$$
with $C_0$ the Euler constant. Hence, it follows that
$$
D'_1(m)=\frac{N}{2m^2}D_1(m)K(m),
$$
where
$$
K(m)=\Big[\psi(a)-\psi(b)+\psi(c)-\psi(d)\Big],
$$
and
$$
a:=\frac{N+2s}{2}-\frac{N}{2m}, \qquad b:=\frac{N}{2}-\frac{N}{2m}, \qquad
c:=\frac{N}{2m}-s,\qquad d:=\frac{N}{2m}.
$$
Thus
$$
D'(m)=D_1(m)\frac{m}{(m-1)(N-2sm)}\left(\frac{(m-2)N+2sm}{(m-1)(N-2sm)}+\frac{N}{2m}K(m)\right).
$$
The first two terms here are positive, and then to analyze the sign of $D'$ we have to study the function
$$H(m):=\Big(\dfrac{(m-2)N+2sm}{(m-1)(N-2sm)}+\frac{N}{2m}K(m)\Big).$$
By definition there holds
$$
K(m)=-\dfrac{4sm^3}{N}\dfrac{(m-2)N+2sm}{(m-1)(N-2sm)((m-1)N+2sm)}+s\sum_{n=1}^\infty\left(
\frac{1}{(n+a)(n+b)}-\frac{1}{(n+c)(n+d)}\right),
$$
and noticing that $a=b+s$ and $d=c+s$, we have
$$
\frac{1}{(n+a)(n+b)}=\frac{1}{s}\Big(\frac{1}{n+b}-\frac{1}{n+b+s}\Big)
$$
and
$$
\frac{1}{(n+c)(n+d)}=\frac{1}{s}\Big(\frac{1}{n+c}-\frac{1}{n+c+s}\Big).
$$
Thus,
\begin{eqnarray*}
\dyle \frac{1}{(n+a)(n+b)}-\frac{1}{(n+c)(n+d)} &=
&\frac{1}{s}\left(\left(\frac{1}{n+b}-\frac{1}{n+c}\right)+\left(\frac{1}{n+c+s}-\frac{1}{n+b+s}\right)\right)\\
&=&-\frac{b-c}{s}\left(\frac{1}{(n+c)(n+b)}-\frac{1}{(n+c+s)(n+b+s)}\right).
\end{eqnarray*}
Hence $ H(m)=((m-2)N+2sm)H_1(m)$, where
$$
H_1(m):=
\frac{1}{(m-1)N+2sm}-\frac{N}{4m^2}\sum_{n=1}^\infty\Big[\frac{1}{(n+c)(n+b)}-\frac{1}{(n+c+s)(n+b+s)}\Big].
$$
Using the fact that $m\in (\frac{2N}{N+2s},\frac{N}{2s})$, we obtain
$(m-2)N+2sm>0$ and therefore the sign of $H$ is the sign of $H_1$. On the other hand, since $s\in (0,1)$, then
$$
\sum_{n=1}^\infty\Big[\frac{1}{(n+c)(n+b)}-\frac{1}{(n+c+s)(n+b+s)}\Big]\le
\frac{1}{(1+c)(1+b)},
$$
whence we conclude that
\begin{eqnarray*}
\dyle H_1(m) &\ge &
\frac{1}{(m-1)N+2sm}-\frac{N}{4m^2}\frac{1}{(1+c)(1+b)}\\
&=&\frac{1}{(m-1)N+2sm}-\frac{N}{(N+2m-2sm)((m-1)N+2m)}.
\end{eqnarray*}
Now, using the fact that $s\in (0,1)$, we get $H_1(m)>0$ and the
result follows.
\end{proof}

As a consequence we get the next regularity result.
\begin{Proposition}\label{propA0}
Let $s\in (0,1)$. Assume that $m\in (\frac{2N}{N+2s},\frac{N}{2s})$, $s<1$ and
$f(x)=\frac{1}{|x|^{\nu}}$ with $\nu=\frac{N-\e}{m}$ for some
small $\e>0$. Then there exists
$\tilde{\lambda}\in (J_s(m), P_s(m))$ such that the solution $u$
of problem \eqref{mainfirst} with $\lambda=\tilde{\lambda}$ satisfies $u\in L^{m_s^{**}}(B_1(0))$.
\end{Proposition}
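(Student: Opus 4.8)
The plan is to run the construction of Lemma \ref{opt1} in reverse, producing a power-type \emph{supersolution} that dominates $u$ and still lies in $L^{m^{**}_s}(B_1)$. Since $m>\frac{2N}{N+2s}$, Lemma \ref{opt3} gives the \emph{strict} inequality $J_s(m)<P_s(m)$, so we may fix $\tilde\lambda\in(J_s(m),P_s(m))$. Because $m>\frac{2N}{N+2s}>1$ we have $J_s(m)>0$ (as $m-1>0$ and $N-2ms>0$), while $\alpha_0(m)=\frac{N+2s}{2}-\frac{N}{m}\in(0,\frac{N-2s}{2})$ forces $P_s(m)=\lambda(\alpha_0(m))<\Lambda_{N,s}$; hence $0<\tilde\lambda<\Lambda_{N,s}$, and since $f=|x|^{-\nu}\in L^m(B_1)\subset H^{-s}(B_1)$ (using $\nu m=N-\e<N$ and $m\ge\frac{2N}{N+2s}$) the energy solution $u$ of \eqref{mainfirst} with $\lambda=\tilde\lambda$ exists and is unique. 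Writing $\tilde\lambda=\lambda(\alpha_1)$ and using that $\lambda(\cdot)$ is strictly decreasing, $\tilde\lambda<P_s(m)=\lambda(\alpha_0(m))$ forces $\alpha_1>\alpha_0(m)$, so that
$$
\gamma:=\frac{N-2s}{2}-\alpha_1<\frac{N}{m}-2s=\frac{N}{m^{**}_s}.
$$

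\noindent
Next I would exhibit the supersolution. Rewriting Lemma \ref{singularity} with $\mu=\frac{N-2s}{2}-\alpha$, one has, for every $0<\mu<N-2s$,
$$
(-\Delta)^s|x|^{-\mu}=\Lambda_\mu\,|x|^{-\mu-2s}\ \text{ in }\ \ren\setminus\{0\},\qquad
\Lambda_\mu:=2^{2s}\,\frac{\Gamma\big(\frac{\mu+2s}{2}\big)\,\Gamma\big(\frac{N-\mu}{2}\big)}{\Gamma\big(\frac{\mu}{2}\big)\,\Gamma\big(\frac{N-\mu-2s}{2}\big)},
$$
and, since $\mu<N-2s$, this identity holds in the distributional sense with no Dirac mass at the origin. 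Put $\mu:=\nu-2s$; for $\e$ small $0<\mu<\frac{N}{m}-2s<N-2s$ ($m<\frac{N}{2s}$ gives $\mu>0$, $m>1$ the last bound). Substituting $\mu=\frac{N}{m}-2s$ gives $\frac{\mu+2s}{2}=\frac{N}{2m}$, $\frac{N-\mu}{2}=\frac{N}{2}-\frac{N}{2m}+s$, $\frac{\mu}{2}=\frac{N}{2m}-s$, $\frac{N-\mu-2s}{2}=\frac{N}{2}-\frac{N}{2m}$, which turns $\Lambda_{\frac{N}{m}-2s}$ into the right-hand side of \eqref{Psm2}, i.e. $\Lambda_{\frac{N}{m}-2s}=P_s(m)$. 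As $\mu\mapsto\Lambda_\mu$ is continuous and $\tilde\lambda<P_s(m)$, after possibly shrinking $\e$ we have $\Lambda_{\nu-2s}>\tilde\lambda$, and then
$$
w(x):=\frac{1}{\Lambda_{\nu-2s}-\tilde\lambda}\,|x|^{-(\nu-2s)}>0
$$
solves $(-\Delta)^sw-\tilde\lambda\,\dfrac{w}{|x|^{2s}}=|x|^{-\nu}=f$ in $B_1\setminus\{0\}$, and (having no Dirac mass at $0$, again because $\nu-2s<N-2s$) it is a weak supersolution of the equation in $B_1$; moreover $w>0=u$ in $\ren\setminus B_1$.

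\noindent
Finally I would compare. Exactly as in the comparison step of Lemma \ref{opt1} — the comparison principle for the linear operator $(-\Delta)^s-\tilde\lambda|x|^{-2s}$ being available because $0<\tilde\lambda<\Lambda_{N,s}$ — the inequality $w\ge u$ on $\ren\setminus B_1$ together with the fact that $w$ is a supersolution in $B_1$ yields $u\le w$ in $B_1$. Since $\nu-2s<\frac{N}{m}-2s=\frac{N}{m^{**}_s}$, we have $|x|^{-(\nu-2s)}\in L^{m^{**}_s}(B_1)$, hence $w\in L^{m^{**}_s}(B_1)$ and therefore $u\in L^{m^{**}_s}(B_1)$, as claimed. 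The crux of the argument is the Gamma-function identity $\Lambda_{\frac{N}{m}-2s}=P_s(m)$ together with the monotonicity of $\mu\mapsto\Lambda_\mu$: this is exactly what makes $\Lambda_{\nu-2s}-\tilde\lambda>0$ for small $\e$ and hence allows the construction to run for every $\tilde\lambda$ strictly below $P_s(m)$ (and not merely up to it); the remaining delicate point is to make sure that $w$, singular at the origin, is a genuine weak supersolution there so that the comparison is legitimate.
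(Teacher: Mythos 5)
Your argument is correct in substance and takes a genuinely different route from the paper's proof, which I think is worth comparing explicitly. The paper works with the \emph{subsolution} $v(x)=C\bigl(|x|^{-\gamma}-|x|^{-(\nu-2s)}\bigr)$ of Lemma \ref{opt1} (with $\gamma>\nu-2s$, which forces $\tilde\lambda>\Lambda_{\nu-2s}$ so that the constant $C>0$), obtains the lower bound $u\ge v\sim|x|^{-\gamma}$, and then gets the matching \emph{upper} bound by passing to $w:=u-v$, which solves a homogeneous problem in $B_1$ and to which Lemma \ref{Lm:singularity_elip} and Theorem \ref{singular} are applied. It then concludes because $\gamma\,m^{**}_s<N$. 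You instead notice that for the conclusion $u\in L^{m^{**}_s}(B_1)$ only an upper barrier is needed, and you build it directly: the pure power $w=\frac{1}{\Lambda_{\nu-2s}-\tilde\lambda}\,|x|^{-(\nu-2s)}$ solves $(-\Delta)^s w-\tilde\lambda w/|x|^{2s}=|x|^{-\nu}$ exactly, is positive outside $B_1$ where $u$ vanishes, and hence dominates $u$ by comparison. The crucial and correct observation is the Gamma-function identity $\Lambda_{N/m-2s}=P_s(m)$, which is exactly what makes the constant in front of the barrier positive for every $\tilde\lambda<P_s(m)$. Your route is shorter, avoids invoking Theorem \ref{singular} (whose hypotheses do not quite match the situation of $w=u-v$ anyway), and works for \emph{every} $\tilde\lambda$ strictly below $\Lambda_{\nu-2s}$, whereas the paper's construction requires $\Lambda_{\nu-2s}<\tilde\lambda<P_s(m)$; together the two arguments actually cover the whole window $(J_s(m),P_s(m))$. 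The delicate comparison step with a supersolution that is singular at the origin is glossed over in Lemma \ref{opt1} of the paper as well, so you are on equal footing there, and you correctly flag the need to check that $w$ has no Dirac mass at the origin (which holds since $\nu-2s<N-2s$).

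One small logical slip you should repair: the proposition fixes $\e$ and asks for the existence of $\tilde\lambda$, but you first fix $\tilde\lambda\in(J_s(m),P_s(m))$ and then say ``after possibly shrinking $\e$,'' which reverses the quantifiers. The fix is immediate and costs nothing: for $\e$ small enough the continuity of $\mu\mapsto\Lambda_\mu$ and the strict inequality $J_s(m)<P_s(m)=\Lambda_{N/m-2s}$ give $J_s(m)<\Lambda_{\nu-2s}<P_s(m)$, so one simply chooses $\tilde\lambda\in\bigl(J_s(m),\Lambda_{\nu-2s}\bigr)$ after $\e$ has been given; the rest of your argument then runs verbatim.
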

\begin{proof}
Fix $\delta>0$ small enough so that if $\alpha\in (\alpha_0(m),
\alpha_0(m)+\delta)$, then $\gamma:=\frac{N-2s}{2}-\alpha$
satisfies $\gamma>\nu-2s$ and $\gamma m^{**}_s<N$.

Define $\tilde{\lambda}=\lambda(\alpha)$. Since $\alpha>\alpha_0$ then $\tilde{\lambda}<P_s(m)$. Thus, due to the continuity of $\lambda$ as a function of $\alpha$, choosing $\delta$ small enough and applying Lemma \ref{opt3} we deduce
$$J_s(m)<\tilde{\lambda}<P_s(m).$$
Let $u$ be the unique solution of problem \eqref{mainfirst} with $\lambda=\tilde{\lambda}$ and
consider the function $v$ defined in \eqref{vvv}.
Since $\gamma>\nu-2s$, by similar arguments as in the proof of Lemma \ref{opt1} we conclude that $v\le u$.
By setting $w:=u-v$, it follows that
\begin{equation*}
\left\{
\begin{array}{rcl}
(-\Delta)^s w-\lambda\dfrac{w}{|x|^{2s}} &=& 0 \inn\Omega,\\
w&=& -v\ge 0\inn \mathbb{R}^{N}\setminus\Omega.
\end{array}\right.
\end{equation*}
Thus, by Lemma \ref{Lm:singularity_elip} and Theorem \ref{singular}, $w\simeq |x|^{-\gamma}$ close to the origin. Since also $v\simeq |x|^{-\gamma}$, also  $u\simeq |x|^{-\gamma}$.
By  the definition of $\gamma$, we obtain that
$\gamma m^{**}_s<N$, thus $u\in L^{m^{**}_s}(B_1(0))$ and then we
conclude.
\end{proof}

\subsection{Nonvariational setting: weak solutions }

In this subsection we consider  $f\in L^ {m}(\Omega)$, with $1<m<\frac{2N}{N+2s}$.

\begin{Theorem}\label{nonv1}
Assume $1<m<\frac{2N}{N+2s}$ and $\lambda<J_s(m)$ defined in \eqref{Jsm}. Then problem \eqref{mainfirst} has a unique weak solution $u$ and it verifies
\begin{equation}\label{nonv2}
\|{u}\|_{L^{m^{**}_s}(\Omega)} \leq c\,\|{f}\|_{L^ {m}(\Omega)}
\qquad \mbox{ where } \qquad m^{**}_s=\frac{mN}{N-2ms}\, .
\end{equation}
Moreover, $u\in W^{s_1,m^*_s}_0(\Omega)$, for all $s_1<s$, with
$m^{*}_s=\frac{mN}{N-ms}$.
\end{Theorem}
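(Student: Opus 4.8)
The plan is to obtain the result by approximation, following the standard Stampacchia-type duality argument adapted to the weighted operator $L_\gamma$. First I would introduce the approximated problems
\begin{equation*}
\left\{\begin{array}{rcll}
(-\Delta)^s u_k-\lambda \dfrac{u_{k-1}}{|x|^{2s}+\frac1k}&=&f_k &\hbox{ in } \Omega,\\
u_k&=&0&\hbox{ in } \R^N\setminus\Omega,
\end{array}\right.
\end{equation*}
with $f_k=T_k(f)$ and $u_0=0$, exactly as in the proof of Theorem \ref{CZ}. Each $u_k$ is bounded, belongs to $H^s_0(\Omega)$, and $\{u_k\}$ is increasing; the Picone/comparison tools (Lemma \ref{compar}) give monotonicity and the compactness lemmas (Lemma \ref{compa}, Lemma \ref{compag}) handle passage to the limit once uniform estimates are in hand. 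The task is therefore to prove an a priori bound $\|u_k\|_{L^{m^{**}_s}(\Omega)}\le c\|f\|_{L^m(\Omega)}$ independent of $k$, and a companion bound in $W^{s_1,m^*_s}_0(\Omega)$.

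For the $L^{m^{**}_s}$ estimate I would pass to $v_k:=|x|^\gamma u_k$, which solves (up to the $k$-regularization) $L_\gamma v_k=|x|^{-\gamma}f_k$ in the weak sense of Definition \ref{super00}, and use truncations $G_j(v_k)$ as test functions together with the weighted Sobolev inequality of Proposition \ref{Sobolev00}. The subcritical range $m<\frac{2N}{N+2s}$ forces a nonvariational argument: rather than a single energy test, I would iterate over the level sets, using that $\lambda<J_s(m)$ is precisely the condition $\frac{4\beta}{(\beta+1)^2}>\frac{\lambda}{\Lambda_{N,s}}$ with $\beta=\frac{N(m-1)}{N-2ms}$, so that the Hardy term can be absorbed exactly as in Theorem \ref{CZ}. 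The cleanest route, though, is a duality argument \`a la Stampacchia: for $g\in L^{(m^{**}_s)'}(\Omega)$ solve the adjoint problem $(-\Delta)^s\varphi-\lambda\frac{\varphi}{|x|^{2s}}=g$, which by Theorem \ref{CZ} (applied to the exponent dual data) yields $\varphi$ with controlled $L^\infty$ or $L^{q}$ norm depending on the weight $|x|^{-\gamma}$; testing the equation for $u_k$ against $\varphi$ and using $\langle u_k,(-\Delta)^s\varphi\rangle-\lambda\int\frac{u_k\varphi}{|x|^{2s}}=\int f_k\varphi$ gives $\big|\int u_k g\big|\le \|f\|_{L^m}\|\varphi\|_{L^{m'}}\le c\|f\|_{L^m}\|g\|_{L^{(m^{**}_s)'}}$, whence the bound by duality.

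For the gradient-type regularity $u\in W^{s_1,m^*_s}_0(\Omega)$ for every $s_1<s$, I would combine the just-obtained $L^{m^{**}_s}$ bound with the known Calder\'on–Zygmund estimate for the pure fractional Laplacian from \cite{LPPS}: write $(-\Delta)^s u=\lambda\frac{u}{|x|^{2s}}+f$, observe that the right-hand side lies in $L^m(\Omega)$ (using $\frac{u}{|x|^{2s}}\simeq|x|^{-\gamma-2s}$ near the origin, which is in $L^m$ precisely because $\lambda<J_s(m)$ keeps $\gamma$ small enough that $(\gamma+2s)m<N$), and apply the linear theory to gain $s$ derivatives in the $L^{m^*_s}$ scale, losing an arbitrarily small amount $s-s_1$ to accommodate the weight's mild singularity. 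Uniqueness follows from linearity together with the comparison principle, testing the difference of two solutions against the adjoint solutions in $\mathcal{T}$. The main obstacle I anticipate is making the duality step fully rigorous at the level of \emph{weak} (not energy) solutions: one must verify that the test functions $\varphi$ produced by Theorem \ref{CZ} actually belong to the admissible class $\mathcal T$ of \eqref{test} — i.e. that $(-\Delta)^s\varphi$ is bounded and H\"older continuous — which forces an extra regularization of $g$ and a limiting argument, and one must check that the Hardy term $\lambda\int\frac{u_k\varphi}{|x|^{2s}}$ is genuinely finite and well-behaved under this pairing, using that $\varphi$ decays like $|x|^{-\gamma}$ and $u_k$ like $|x|^{-\gamma}$ with $2\gamma+2s<N$ guaranteed by $\lambda<J_s(m)$.
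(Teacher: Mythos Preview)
Your route to the $L^{m^{**}_s}$ bound is more circuitous than necessary. You correctly identify that $\lambda<J_s(m)$ is equivalent to $\frac{4\beta}{(\beta+1)^2}>\frac{\lambda}{\Lambda_{N,s}}$ with $\beta=\frac{N(m-1)}{N-2ms}\in(0,1)$, and that the Hardy term can therefore be absorbed ``exactly as in Theorem~\ref{CZ}''. That \emph{is} the paper's proof: one tests the approximated equation directly with $u_n^\beta$ (no ground-state change of variables, no truncations $G_j$, no duality), uses Lemma~\ref{alg}, Hardy and Sobolev, and reads off \eqref{nonv2}. Your duality alternative is not wrong in spirit --- the thresholds do match, since the $\beta$ associated to the dual exponent $(m^{**}_s)'$ is $1/\beta$ and $\frac{4\beta}{(\beta+1)^2}$ is invariant under $\beta\mapsto 1/\beta$ --- but it buys nothing here and, as you note, creates real difficulties because the adjoint $\varphi$ satisfies $(-\Delta)^s\varphi=\lambda\varphi|x|^{-2s}+g$, which is singular at the origin, so $\varphi\notin\mathcal T$.

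For the $W^{s_1,m^*_s}_0$ regularity there is a genuine gap. You want to bootstrap via $(-\Delta)^s u=\lambda u|x|^{-2s}+f$ and claim the right-hand side lies in $L^m(\Omega)$ because ``$u\simeq|x|^{-\gamma}$ near the origin''. But only the lower bound $u\ge C|x|^{-\gamma}$ is available (Lemma~\ref{Lm:singularity_elip}); the matching upper bound is Theorem~\ref{singular}, which requires $m>\frac{N}{2s}$, the opposite regime. With merely $u\in L^{m^{**}_s}$ the H\"older pairing $\|u|x|^{-2s}\|_{L^m}$ lands exactly on the non-integrable weight $|x|^{-N}$, so the argument fails. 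The paper avoids this entirely: testing with $u_n^\beta$ yields, as a by-product, the bound
\[
\iint_{D_\Omega}\frac{(u(x)-u(y))(u^\beta(x)-u^\beta(y))}{|x-y|^{N+2s}}\,dx\,dy\le C,
\]
and then a direct H\"older splitting with the auxiliary measure $d\sigma=\frac{u^\beta(x)-u^\beta(y)}{u(x)-u(y)}\,dx\,dy$, the pointwise inequality $\frac{u(x)-u(y)}{u^\beta(x)-u^\beta(y)}\le\frac1\beta(u^{1-\beta}(x)+u^{1-\beta}(y))$ (valid since $\beta<1$), and the identity $\frac{(1-\beta)m^*_s}{2-m^*_s}=m^{**}_s$ give the $W^{s_1,m^*_s}_0$ bound for every $s_1<s$. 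This is the step you are missing.
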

\begin{proof}
Let $\{f_n\}_n\subset L^\infty(\Omega)$ be such that $0\le f_n\le
f$ and $f_n\uparrow f$ strongly in $L^m(\Omega)$. Define $u_n$ to
be the unique positive solution to the approximated problem
\begin{equation}\label{aprmain}
\left\{
\begin{array}{rcl}
(-\Delta)^s u_n -\lambda\dfrac{u_n}{|x|^{2s}+\frac 1n} &=& f_n \inn\Omega,\\
u_n&=&0\inn \mathbb{R}^{N}\setminus\Omega.
\end{array}\right.
\end{equation}
Then $\{u_n\}_n$ is monotone in $n$. As in the  proof of Theorem \ref{CZ} we use $u_n^\beta$ as a test function in
\eqref{aprmain} whit
$$0<\beta:=\dfrac{N(m-1)}{N-2ms}<1$$
(actually we have to test with $(u_n+\delta)^\beta$, $\delta>0$, and to make $\delta\rightarrow 0$ at the end, but to simplify we will drop this parameter here). Then,
\begin{eqnarray}\label{probT}
\frac{a_{N,s}}{2}\iint_{D_\Omega}\dfrac{(u_n(x)-u_n(y))(u_n^{\beta}(x)-u_n^{\beta}(y))}{|x-y|^{N+2s}}\,dx\,dy\le\lambda
\int_{\Omega}\dfrac{u_n^{\beta+1}}{|x|^{2s}}dx+ \int_{\Omega} f_n
u_n^{\beta} dx.
\end{eqnarray}
By the fact that $m'\beta=\frac{(\beta+1)2^*_s}{2}$ and by H\"older's inequality,
\begin{equation}\label{probT2}
\int_{\Omega} f_n u_n^{\beta} dx\le
\|f\|_{L^m(\Omega)}\Big(\int_{\Omega}u_n^{\frac{(\beta+1)2^*_s}{2}}
dx\Big)^{\frac{\beta}{\beta+1}\frac{2}{2^*_s}}.
\end{equation}
Now, using Lemma \ref{alg} and Hardy and Sobolev inequalities, it follows that
$$
\frac{a_{N,s}}{2}\Big(\frac{4\beta}{(\beta+1)^2}-\frac{\lambda}{\Lambda_{N,s}}\Big)
\Big(\iint_{D_\Omega}\dfrac{(u_n^{\frac{\beta+1}{2}}(x)-u_n^{\frac{\beta+1}{2}}(y))^2}{|x-y|^{N+2s}}
\,dx\,dy\Big)^{\frac{1}{\beta+1}}\le
C(N,s)\|f\|_{L^m(\Omega)}.$$
Since $\lambda<J_s(m)$, noticing that $m^{**}_s=2^*_s\frac{\beta+1}{2}$ and applying Sobolev inequality, \eqref{nonv2} follows.

Furthermore, in particular
\begin{equation}\label{probT3}
\int_{\Omega}u_n^{\frac{(\beta+1)2^*_s}{2}}
dx\le C_1\mbox{   and   }\int_{\Omega}\frac{u_n^{\beta+1}}{|x|^{2s}}dx
\le C_2,
\end{equation}
where $C_1$ and $C_2$ are independent of $n$.
Since $\frac{(\beta+1)2^*_s}{2}>1$, the  Lebesgue Theorem implies that  $u_n\uparrow u$ a.e. and strongly in $L^\sigma(\O)$ for all $1\le \sigma\le \frac{(\beta+1)2^*_s}{2}$ and $\dfrac{u_n}{|x|^{2s}}\uparrow\dfrac{u}{|x|^{2s}}\mbox{  strongly
in } L^1(\Omega)$. Therefore, $u$ is a weak solution of \eqref{mainfirst}.

Moreover by  using Fatou's Lemma, \eqref{probT2} and \eqref{probT3} in \eqref{probT} we obtain
\begin{equation}\label{TR}
\iint_{D_\Omega}\dfrac{(u(x)-u(y))(u^{\beta}(x)-u^{\beta}(y))}{|x-y|^{N+2s}}\,dx\,dy\le
C.
\end{equation}

Moreover $u$ is the unique weak solution. Indeed
if $u_2$ is an other solution of \eqref{mainfirst} with the above
regularities, then setting $w=u_2-u$, we conclude that
$$\left\{
\begin{array}{rcl}
(-\Delta)^s w-\lambda\dfrac{w}{|x|^{2s}} &=& 0\inn\Omega,\\
w &=&0\inn \mathbb{R}^{N}\setminus\Omega.
\end{array}\right.
$$
By testing with  $\phi\in \mathcal{T}$ defined in \eqref{test}, with $(-\Delta)^s \phi=\varphi>0$, we obtain that $u_2\equiv u$.

To finish we prove the regularity of the {\it fractional gradient}. Fix $s_1<s$ and let $q=m^*_s<2$.
Call
$$
d\sigma:=\begin{cases}
\Big(\dfrac{u^\beta(x)-u^\beta(y)}{u(x)-u(y)}\Big)dxdy \qquad\hbox{ if }u(x)\neq u(y),\\
0 \qquad\hbox{ if }u(x)=u(y),
\end{cases}$$
and notice that $d\sigma$ is positive. Therefore, by Hölder's inequality
\begin{equation*}\begin{split}
&\dyle\int_{\Omega}\int_{\Omega}\dfrac{|u(x)-u(y)|^q}{|x-y|^{N+qs_1}}\,dx\,dy=
\int_{\Omega}\int_{\Omega}\dfrac{|u(x)-u(y)|^q}{|x-y|^{N+qs_1}}\Big(\dfrac{u^\beta(x)-u^\beta(y)}{u(x)-u(y)}\Big)\times
\Big(\dfrac{u(x)-u(y)}{u^\beta(x)-u^\beta(y)}\Big)\,dx\,dy\\
&\le
\Big(\dyle\int_{\Omega}\int_{\Omega}\dfrac{|u(x)-u(y)|^2}{|x-y|^{N+2s}}d\sigma\Big)^{\frac{q}{2}}
\times \dyle \Big(\int_{\Omega}\int_{\Omega}\Big(\dfrac{u(x)-u(y)}{u^\beta(x)-u^\beta(y)}\Big)^{\frac{2}{2-q}}\dfrac{d\sigma}{|x-y|^{N-\theta}}\Big)^{\frac{2-q}{2}}\\
&=\Big(\dyle\int_{\Omega}\int_{\Omega}\dfrac{(u(x)-u(y))(u^\beta(x)-u^\beta(y))}{|x-y|^{N+2s}}dxdy\Big)^{\frac{q}{2}}
\times
\Big(\int_{\Omega}\int_{\Omega}\Big(\dfrac{u(x)-u(y)}{u^\beta(x)-u^\beta(y)}\Big)^{\frac{q}{2-q}}\dfrac{dxdy}{|x-y|^{N-\theta}}\Big)^{\frac{2-q}{2}},
\end{split}\end{equation*}
where $\theta:=\frac{2(s-s_1)}{2-q}$.  The first term is bounded by \eqref{TR}, and the second one can be estimated as follows.
Since $\beta<1$, then
$$
0\le \dfrac{u(x)-u(y)}{u^\beta(x)-u^\beta(y)}\le
\frac{1}{\beta}(u^{1-\beta}(x)+u^{1-\beta}(y)),
$$
and hence
\begin{eqnarray*}
\dyle
\int_{\Omega}\int_{\Omega}\Big(\dfrac{u(x)-u(y)}{u^\beta(x)-u^\beta(y)}\Big)^{\frac{q}{2-q}}\dfrac{dxdy}{|x-y|^{N-\theta}}
&\le & \frac{C}{\beta}\int_{\Omega}\int_{\Omega}((u^{\frac{(1-\beta)q}{2-q}}(x)+u^{\frac{(1-\beta)q}{2-q}}(y))\dfrac{dxdy}{|x-y|^{N-\theta}}\\
&\le &
\frac{2C}{\beta}\int_{\Omega}u^{\frac{(1-\beta)q}{2-q}}(x)\left(\int_{\Omega}\dfrac{dy}{|x-y|^{N-\theta}}\right)dx.
\end{eqnarray*}
Notice that
$$
\sup_{\{x\in
\Omega\}}\left(\int_{\Omega}\dfrac{dy}{|x-y|^{N-\theta}}\right)\le C,
$$
hence
$$
\dyle
\int_{\Omega}\int_{\Omega}\Big(\dfrac{u(x)-u(y)}{u^\beta(x)-u^\beta(y)}\Big)^{\frac{q}{2-q}}\dfrac{dxdy}{|x-y|^{N-\theta}}\le
C_1\int_{\Omega}u^{\frac{(1-\beta)q}{2-q}}(x)dx.
$$
Now, since $\dfrac{(1-\beta)q}{2-q}=m^{**}_s$, the result
follows using \eqref{nonv2}.
\end{proof}

In the case where no condition is imposed on $\lambda$, then
additional condition on $f$ is needed. The next Theorem gives a necessary and sufficient
condition to ensure the existence of a weak solution.

\begin{Theorem}\label{necesariosuficiente}
Let $\l\le    \Lambda_{N,s}$ and suppose that $f\in L^1(\Omega)$, $f\ge 0$.
Then $u$ is a positive weak solution to the problem
\eqref{mainfirst} if and only if $f$ satisfies
$$\dint_{B_{r}(0)}|x|^{-\gamma}f\,dx<+\iy,$$
for some $B_{r}(0)\subset\subset\Omega$.

Moreover, if $u$ is the unique weak solution to \eqref{mainfirst},
then
\begin{equation} \label{tku}
\forall k \geq 0 \qquad T_k (u)  \in H^s_0 (\Omega), \qquad\, u \in L^
{q}(\Omega) \,, \qquad  \forall  \ q\in \big(1,
\frac{N}{N-2s}\big)\,,
\end{equation}
 \begin{equation}\label{L1du}
\big|(-\Delta)^{\frac{s}{2}}   (u)\big| \in L^{r}(\Omega) \,,
\qquad \forall  \  r \in \big(1,  \frac{N}{N-s} \big) \,,
\end{equation}
and $u\in W^{s_1,q_1}_0(\Omega)$ for all $s_1<s$ and for all
$q_1<\frac{N}{N-s}$.
\end{Theorem}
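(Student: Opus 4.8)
The plan is to prove the two implications and the regularity estimates separately, using in both directions a single device: testing the (approximate) equation against the energy solution of the \emph{dual} problem $(-\Delta)^s\phi-\lambda|x|^{-2s}\phi=g$ for a conveniently chosen nonnegative bounded datum $g$, and exploiting that by Theorem~\ref{singular} and Lemma~\ref{Lm:singularity_elip} such a $\phi$ satisfies $c\,|x|^{-\gamma}\le\phi(x)\le C\,|x|^{-\gamma}$ near the origin, with $\gamma$ as in \eqref{lambda}. Throughout I set $f_n:=T_n(f)\uparrow f$ (with $T_n$ as in \eqref{Tk}) and let $u_n\in H^s_0(\Omega)$ be the energy solution of \eqref{mainfirst} with datum $f_n$; by the comparison principle for $(-\Delta)^s-\lambda|x|^{-2s}$ (valid since $\lambda\le\Lambda_{N,s}$ and the nonlinearity is linear, so $f(x,\sigma)/\sigma$ is nonincreasing) the sequence $\{u_n\}$ is nondecreasing, and $u_n>0$ in $\Omega$ once $f\gneqq 0$. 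I first treat $\lambda<\Lambda_{N,s}$; the case $\lambda=\Lambda_{N,s}$ is handled identically, replacing $H^s_0(\Omega)$ by the space $H(\Omega)$ of the Remark following Definition~\ref{energia} and taking $\gamma=\tfrac{N-2s}{2}$.

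\emph{Necessity.} Assume $u$ is a positive weak solution and fix $B_\delta:=B_\delta(0)\subset\subset\Omega$. Let $\phi_n\in H^s_0(B_\delta)$ solve $(-\Delta)^s\phi_n-\lambda|x|^{-2s}\phi_n=f_n\chi_{B_\delta}$ in $B_\delta$; then $\phi_n\ge0$, and since $u$ is a weak supersolution of this problem in $B_\delta$ with $u\ge0=\phi_n$ in $\ren\setminus B_\delta$, the weak maximum principle applied to $u-\phi_n$ (whose exterior datum is nonnegative and whose right-hand side $f-f_n\chi_{B_\delta}$ is nonnegative) gives $\phi_n\le u$ in $B_\delta$. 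Let $w\in H^s_0(B_\delta)$ solve the dual problem with datum a fixed cutoff $\varphi\in\mathcal{C}^\infty_0(B_\delta)$, $\varphi\equiv1$ on $B_{\delta/2}$, $0\le\varphi\le1$; by Lemma~\ref{Lm:singularity_elip}, $w\ge c\,|x|^{-\gamma}$ on $B_{\delta/2}$. Testing the equation for $\phi_n$ with $w$ and that for $w$ with $\phi_n$ and invoking the symmetry of $(-\Delta)^s$ (all bilinear and Hardy integrals converge because $\phi_n,w\in H^s_0(B_\delta)$) yields $\int_{B_\delta}f_n\,w\,dx=\int_{B_\delta}\varphi\,\phi_n\,dx\le\int_{B_\delta}\phi_n\,dx\le\|u\|_{L^1(\Omega)}$. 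Hence $c\int_{B_{\delta/2}}|x|^{-\gamma}f_n\,dx\le\|u\|_{L^1(\Omega)}$, and monotone convergence as $n\to\infty$ gives $\int_{B_{\delta/2}}|x|^{-\gamma}f\,dx<+\infty$.

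\emph{Sufficiency and $L^1$ estimates.} Conversely suppose $\int_{B_r}|x|^{-\gamma}f\,dx<\infty$. Let $w\in H^s_0(\Omega)$ solve $(-\Delta)^s w-\lambda|x|^{-2s}w=1$ in $\Omega$; by Theorem~\ref{singular}, $0\le w(x)\le C|x|^{-\gamma}$ on $\ren$. Testing the equation for $u_n$ with $w$ and that for $w$ with $u_n$ and using symmetry gives $\int_\Omega u_n\,dx=\int_\Omega f_n\,w\,dx\le C\int_{B_r}|x|^{-\gamma}f\,dx+C\,r^{-\gamma}\|f\|_{L^1(\Omega)}<\infty$, uniformly in $n$. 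Next, testing the equation for $u_n$ with a cutoff $\zeta\in\mathcal{C}^\infty_0(\Omega)$, $\zeta\equiv1$ on $B_r$, $0\le\zeta\le1$, and using $\tfrac{a_{N,s}}{2}\langle(-\Delta)^s u_n,\zeta\rangle=\int_\Omega u_n\,(-\Delta)^s\zeta\,dx$ with $(-\Delta)^s\zeta\in L^\infty(\ren)$ and $f_n\zeta\ge0$, we obtain $\lambda\int_{B_r}|x|^{-2s}u_n\,dx\le\|(-\Delta)^s\zeta\|_{L^\infty}\|u_n\|_{L^1}$; together with $\int_{\Omega\setminus B_r}|x|^{-2s}u_n\le r^{-2s}\|u_n\|_{L^1}$ this yields $\sup_n\int_\Omega|x|^{-2s}u_n\,dx<\infty$. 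By monotone convergence $u_n\uparrow u$ with $u,\,|x|^{-2s}u\in L^1(\Omega)$, and passing to the limit in the weak formulation (dominated convergence, since $\phi$ and $(-\Delta)^s\phi$ are bounded for $\phi\in\mathcal{T}$) shows $u$ is a positive weak solution of \eqref{mainfirst}.

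\emph{Finer estimates and uniqueness.} For \eqref{tku}–\eqref{L1du} one runs the fractional Boccardo–Gallou\"et scheme on $\{u_n\}$: testing with $T_k(u_n)$ and using Proposition~\ref{prop}(iii), Hardy's inequality and the bound on $\int_\Omega|x|^{-2s}u_n$ gives $\|T_k(u_n)\|^2_{H^s_0(\Omega)}\le C\big(\lambda k\int_\Omega|x|^{-2s}u_n\,dx+k\|f\|_{L^1}\big)\le Ck$; the Sobolev inequality then produces the Marcinkiewicz bound $u_n\in L^{N/(N-2s),\infty}$, and splitting $u_n=T_k(u_n)+G_k(u_n)$ and controlling $\|G_k(u_n)\|^2_{H^s_0}$ via Proposition~\ref{prop}(ii) gives $|(-\Delta)^{s/2}u_n|\in L^{N/(N-s),\infty}$, all uniformly in $n$. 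Passing to the a.e./weak limit and using weak lower semicontinuity yields $T_k(u)\in H^s_0(\Omega)$ for all $k$, $u\in L^q(\Omega)$ for $q<\tfrac{N}{N-2s}$, $|(-\Delta)^{s/2}u|\in L^r(\Omega)$ for $r<\tfrac{N}{N-s}$, and finally $u\in W^{s_1,q_1}_0(\Omega)$ for $s_1<s$, $q_1<\tfrac{N}{N-s}$, by the embedding relating the fractional gradient to fractional Sobolev regularity exactly as in the case $\lambda=0$ of \cite{LPPS}. Uniqueness follows as in Theorem~\ref{nonv1}: the difference $w$ of two weak solutions solves the homogeneous problem with zero exterior datum, and testing against $\phi\in\mathcal{T}$ with $(-\Delta)^s\phi=\varphi>0$ forces $w\equiv0$. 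The main obstacle is the uniform control of the Hardy term $\int_\Omega|x|^{-2s}u_n\,dx$—without it the Boccardo–Gallou\"et estimates break down, and it is precisely here that the weighted integrability hypothesis $\int|x|^{-\gamma}f<\infty$ (equivalently, finiteness of $\int f\,w$ for the dual solution $w$) enters; a secondary technical point is the justification of the symmetry/duality identities and of the comparison $\phi_n\le u$ between an energy solution and an $L^1$ weak supersolution, which rests on the pointwise inequalities behind the comparison principles of \cite{LPPS,BK}.
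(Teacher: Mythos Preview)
Your proof follows the same duality idea as the paper: test the (approximate) equation against the solution of the adjoint problem with constant datum, and use Theorem~\ref{singular} and Lemma~\ref{Lm:singularity_elip} to convert the resulting bound into the weighted condition $\int_{B_r}|x|^{-\gamma}f<\infty$. The regularity part is also handled the same way, by the $T_k/G_k$ estimates and the $L^1$-theory of \cite{LPPS}, once the right-hand side $\lambda u_n/|x|^{2s}+f_n$ is uniformly in $L^1$.

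The one substantive difference is the choice of approximation. The paper \emph{truncates the Hardy potential} and decouples the iteration: it builds $\varphi_n$ from $(-\Delta)^s\varphi_n=\lambda\varphi_{n-1}/(|x|^{2s}+\tfrac1n)+1$ (and similarly $u_n$), so each $\varphi_n$ has bounded, H\"older right-hand side and therefore lies in the test class $\mathcal{T}$; one can then plug $\varphi_n$ directly into the weak formulation of $u$ and obtain $\int_\Omega f\varphi_n\le\int_\Omega u$ with no further ado. You instead keep the full singular potential in the auxiliary problems, which forces you to compare an $H^s_0$ energy solution $\phi_n$ with the $L^1$ weak solution $u$ via a maximum principle. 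That step---``$\phi_n\le u$''---is precisely the point you flag as ``secondary'', but it is not immediate in this framework: your $\phi_n$ is \emph{not} in $\mathcal{T}$ (since $(-\Delta)^s\phi_n$ contains the unbounded term $\lambda|x|^{-2s}\phi_n$), so you cannot use it as a test function in Definition~\ref{veryweak}, and the comparison tools available in the paper (Lemma~\ref{compar}) are stated for energy solutions on both sides. The paper's truncated-potential scheme is designed exactly to sidestep this issue; if you want to keep your direct route, you need to supply an independent Kato-type maximum principle for $L^1$ weak supersolutions against $H^s_0$ subsolutions, which is extra work not contained in your sketch.
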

\proof \textit{Necessary condition: } Consider
$u$  a positive weak solution to the problem \eqref{mainfirst} and
consider $\varphi_n\in L^{\infty}(\Omega)\cap H_{0}^{s}(\Omega)$ the
positive solution to
$$
\left\{\begin{array}{rcl}
(-\Delta)^s\varphi_n&=&\lambda\dfrac{\varphi_{n-1}}{|x|^{2s}+\frac{1}{n}}+1 \quad\mbox{in }\Omega,\\
\varphi_n&=&0\inn\ren\setminus\Omega,\\
\end{array}\right.
$$
where
$$
\left\{\begin{array}{rcl}
(-\Delta)^s \varphi_0&=&1 \quad\mbox{in }\Omega,\\
\varphi_0&=&0\inn\ren\setminus\Omega.\\
\end{array}\right.
$$
Then it is easy to check that    $\varphi_0\leq \varphi_{1}\leq
\varphi_{n-1}\leq\varphi_{n}\leq\varphi$, where $\varphi$ is the pointwise limit and then
\begin{equation}\label{varphiProb}
\left\{\begin{array}{rcl}
(-\Delta)^s \varphi&=&\lambda\dfrac{\varphi}{|x|^{2s}}+1 \quad\mbox{in }\Omega,\\
\varphi&>&0\quad\hbox{ in }\Omega,\\
\varphi&=&0\inn(\ren\setminus\Omega).\\
\end{array}\right.
\end{equation}
Taking $\varphi_n$ as a test function in \eqref{mainfirst}, we get
$$\dint_{\Omega} f\varphi_{n}\,dx\leq\dint_{\Omega}
u\,dx=C<\infty.$$
Hence, $\{f\varphi_{n}\}_{n\in\mathbb{N}}$ is an increasing
sequence uniformly bounded in $L^1(\Omega)$, and then applying the
Monotone Convergence Theorem and Lemma \ref{Lm:singularity_elip} we obtain
$$
\tilde{C} \dint_{B_{r}(0)} |x|^{-\gamma} f\,dx\leq \dint_{\Omega}
f\varphi\,dx \leq C.
$$

\noindent \textit{Sufficient condition:} Assume that
$$\dint_{B_{r}(0)}|x|^{-\gamma}f\,dx<+\iy,$$ for all
$B_{r}(0)\subset\subset\Omega$ small enough; let consider the
sequence of energy solutions $u_n\in L^{\infty}(\Omega)\cap
H_{0}^{s}(\Omega)$ to the following approximated problems
\begin{equation}\label{approx}
\left\{
\begin{array}{rcl}
(-\Delta)^s u_{n}&=&\lambda\dfrac{u_{n-1}}{|x|^{2s}+\frac{1}{n}}+f_n\inn \Omega,\\
u_{n}(x)&=&0\inn \ren\setminus\Omega,
\end{array}
\right.
\end{equation}
where
$$
\left\{
\begin{array}{rcl}
(-\Delta)^{s} u_{0}&=&f_1\inn \Omega,\\
u_{0}(x)&=&0\inn (\ren\setminus\Omega),\\
\end{array}
\right.
$$
with $f_n= T_{n}(f)$ and $u_{0}\leq u_{1}\leq u_{n-1}\leq
u_{n}\inn\mathbb{R}^{N}$. Since $f_n\ge 0$, $u_n(x)\ge 0$ in $\Omega$.   Take  $\varphi\in
H^{s}_{0}(\Omega)$, the positive energy solution to \eqref{varphiProb},
as a test function in \eqref{approx}. As a consequence of Lemma \ref{singular}, it follows that
$$\dint_{\Omega}
u_n\,dx\leq \dint_{\Omega} f\varphi\,dx\leq \tilde{C}\int_\Omega f|x|^{-\gamma}\leq C.$$
Hence, since the
sequence $\{u_n\}_{n\in\mathbb{N}}$ is increasing, we can define
$u:=\lim_{n\rightarrow\infty}u_n$, and conclude that $u\in
L^1(\Omega)$. We
claim that $\dfrac{u}{|x|^{2s}}\in L^1(\Omega)$.
Indeed, let  $\psi$ be the unique bounded positive solution to the
problem $$ \left\{\begin{array}{rcl}
(-\Delta)^s \psi&=&1 \quad\mbox{in }\Omega,\\
\psi &=&0\inn \ren\setminus\Omega, \\
\end{array}\right.
$$
then  $\psi\ge C$ in $B_r(0)$.  By using $\psi$ as a
test function in \eqref{approx},$$
\int_{\Omega}\frac{u_{n-1}}{|x|^{2s}+\frac{1}{n}}dx\le \frac 1C\int_{B_r(0)}\frac{\psi u_{n-1}}{|x|^{2s}+\frac{1}{n}}dx+ C(r)\int_{\Omega\backslash B_r(0)}u_ndx\le C,
$$
and thus
$$\lambda\dfrac{u_{n-1}}{|x|^{2s}+\frac{1}{n}}+f_n \nearrow \lambda\dfrac{u}{|x|^{2s}}+f \mbox{ strongly in }L^1(\Omega).$$

Testing with $T_k(u_n)$ in \eqref{approx} and considering the previous estimates, we easily get that
$T_k(u_n)\rightharpoonup T_k(u)$ weakly in $H^s_0 (\Omega)$.
Since the sequence $\{\lambda\dfrac{u_{n-1}}{|x|^{2s}+\frac{1}{n}}+f_n\}_{n\in\mathbb{N}}$ converges strongly in $L^1(\Omega)$, then by the results of \cite{LPPS}, we reach that $u\in L^\sigma(\Omega)$ for all
$\sigma<\frac{N}{N-2s}$ and $\big|(-\Delta)^{\frac{s}{2}}(u)\big|
\in L^{r}(\Omega)$ for all  $r \in \big(1,
\frac{N}{N-s} \big)$. Moreover, according to Theorem 5 (C) of Charter 5 in \cite{Stein}, we conclude that $u\in W^{s_1,q_1}_0(\Omega)$ for all $s_1<s$ and for all
$q_1<\frac{N}{N-s}$. See too \cite{KMS} and  \cite{AAB} for a simple proof.
\qed
\medskip

\begin{remark}\label{nonexistrem}
As a consequence of this result, together with the weak Harnack inequality, one can easily prove nonexistence for $\lambda>\Lambda_{N,s}$.
\end{remark}

\section{Problems with the Hardy potential and nonlinear term  singular at the boundary.}\label{sec:main}\rm
The results in this section have some partial precedents in \cite{AA} and are also applicable to the    local case.
The aim will be to study the problem
\begin{equation}\label{mainp}
\left\{
\begin{array}{rcll}
(-\Delta)^{s}u &= &\lambda \dfrac{u}{|x|^{2s}}+\dfrac{h(x)}{u^{\sigma}}&{\rm in}\; \Omega,\\
u &> & 0 &{\rm in}\; \Omega, \\
u &= &0 &{\rm in } \,\, \ren\setminus\Omega,
\end{array}
\right.
\end{equation}
where $h$ is a nonnegative function, $\sigma>0$
and $\lambda\ge 0 $. As we pointed out in Remark \ref{nonexistrem} it can be easily checked that problem \eqref{mainp} has
no positive solution for $\l>\Lambda_{N,s}$. Hence we will assume
$\l\le \Lambda_{N,s}$.
\begin{remark} Call $\mu:=-\sigma$.
We know that problem \eqref{mainp} has no positive solution for $\mu>p_+(\lambda):= 1+\frac{2s}{\gamma}$, where $\gamma$ is defined in \eqref{g1}.  A quite complete study is done in \cite{BMP}, also for the case $1< \mu<p_+(\lambda)$ (see \cite{F} for a different approach). The case $\mu=1$ is related to the first eigenvalue of the operator $ (-\Delta)^{s}(\cdot)-\lambda \dfrac{(\cdot)}{|x|^{2s}}$ and the case $0\le \mu<1$ is easily handled as a minimization problem.

Therefore, finding a solution of \eqref{mainp} can be seen as proving that there is not a lower threshold for the power to solve the semilinear problem.
\end{remark}

The main existence result in this section is the following.
\begin{Theorem}\label{m1}
Assume that $\sigma \ge 1$ and $\lambda\le
\Lambda_{N,s}$. Then, for all $h\in L^1(\Omega)$, problem
\eqref{mainp} has a positive weak solution. More
precisely,
\begin{enumerate}
\item if $\sigma=1$, then $u\in H^{s}_0(\Omega)$
whether $\l<\Lambda_{N,s}$, and $u\in W^{s,q}_0(\Omega)$ for all $q<2$
if $\l=\Lambda_{N,s}$;
\item if $\sigma>1$, then
$u\in H^{s}_{loc}(\Omega)$ with $G_k(u)\in H^{s}_0(\Omega)$ and
$T_k^{\frac{\sigma+1}{2}}(u)\in H^s_0(\Omega)$.
Moreover if
$\Big[\dfrac{4\sigma}{(\sigma+1)^2}-\dfrac{\lambda}{\Lambda_{N,s}}\Big]>0$,
then $u^{\frac{\sigma+1}{2}}\in H^{s}_0(\Omega)$.
\end{enumerate}

\end{Theorem}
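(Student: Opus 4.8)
The plan is to follow the classical scheme for singular semilinear problems (Boccardo--Orsina, and its fractional versions in \cite{BMP,AA}) combined with the tools of the previous sections. First I would approximate: for each $n\in\N$, let $u_n\in L^\infty(\ren)\cap H^s_0(\Omega)$ be the solution of the iterated, regularized problem
\[
(-\Delta)^s u_n=\lambda\,\frac{u_{n-1}}{|x|^{2s}+\frac1n}+\frac{T_n(h)}{\bigl(u_n+\frac1n\bigr)^{\sigma}}\ \inn\Omega,\qquad u_n=0\ \inn\ren\setminus\Omega,\qquad u_0\equiv0.
\]
Since $u_{n-1}\in L^\infty$, the coefficient $g_n:=\lambda u_{n-1}(|x|^{2s}+\frac1n)^{-1}$ is bounded and the singular term is bounded by $n^{\sigma}\,T_n(h)$, so a bounded solution exists by Schauder's fixed point theorem together with the $L^\infty$ and $\mathcal{C}^\alpha$ regularity of $(-\Delta)^{-s}$ recalled after \eqref{test}; uniqueness and $u_n\ge0$ follow because the right‑hand side is nonincreasing in $u_n$. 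The sequence is monotone, $0=u_0\le u_1\le\cdots\le u_n\le u_{n+1}$: using $u_n\ge u_{n-1}$, that $t\mapsto(|x|^{2s}+\frac1t)^{-1}$ and $t\mapsto(\,\cdot+\frac1t)^{-\sigma}$ are increasing in $t$, and $T_{n+1}(h)\ge T_n(h)$, one checks that $u_{n+1}$ is a supersolution of the equation solved by $u_n$ against the decreasing nonlinearity, so a comparison argument in the spirit of Lemma \ref{compar} gives the ordering. Finally $u_1$ solves $(-\Delta)^s u_1=T_1(h)\gneqq0$, hence $u_1>0$ in $\Omega$ by the strong maximum principle, so $u_n\ge u_1>0$; this is what makes the singular term locally harmless and yields positivity of the limit.

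Next come the $n$‑uniform a priori estimates. For $\sigma=1$, using $u_n$ itself as a test function and $u_{n-1}\le u_n$, $\frac{T_n(h)u_n}{u_n+1/n}\le T_n(h)\le h$, and the Hardy inequality \eqref{Hardy},
\[
\frac{a_{N,s}}{2}\dint_{D_\Omega}\frac{(u_n(x)-u_n(y))^2}{|x-y|^{N+2s}}\,dx\,dy\le\frac{\lambda}{\Lambda_{N,s}}\cdot\frac{a_{N,s}}{2}\dint_{D_\Omega}\frac{(u_n(x)-u_n(y))^2}{|x-y|^{N+2s}}\,dx\,dy+\|h\|_{L^1(\Omega)}.
\]
If $\lambda<\Lambda_{N,s}$ the factor $1-\lambda/\Lambda_{N,s}$ is positive and $\{u_n\}$ is bounded in $H^s_0(\Omega)$; if $\lambda=\Lambda_{N,s}$ the same test function and the improved Hardy inequality (giving the norm of $H(\Omega)\hookrightarrow W^{s,q}_0(\Omega)$, $q<2$, as in the Remark after Definition \ref{energia}) give a bound in $W^{s,q}_0(\Omega)$ for every $q<2$. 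Since $(-\Delta)^su_n\ge0$ and $\{u_n\}$ is increasing, Lemma \ref{compa} upgrades weak convergence to strong convergence of $u_n$ to the pointwise limit $u$ in the relevant space; passing to the limit in the equation tested with $\varphi\in\mathcal{C}^\infty_0(\Omega)\subset\mathcal{T}$ (on which $u_n\ge u_1\ge c_{\mathrm{supp}\,\varphi}>0$, so $\frac{T_n(h)}{u_n+1/n}\to\frac{h}{u}$ by dominated convergence) shows $u$ is a weak solution, and one extends to all of $\mathcal{T}$ using the domination $|\phi|\le C\varphi_0$, where $\varphi_0$ solves $(-\Delta)^s\varphi_0=1$.

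For $\sigma>1$ one cannot test with $u_n$ globally and the estimates split. Testing with $\bigl(u_n+\frac1n\bigr)^{\sigma}-\bigl(\frac1n\bigr)^{\sigma}\in H^s_0(\Omega)$, the algebraic inequality of Lemma \ref{alg} with $a=\sigma$ applied to $s_i=u_n+\frac1n$ bounds the bilinear form from below by $\frac{4\sigma}{(\sigma+1)^2}$ times the $H^s$‑seminorm of $\bigl(u_n+\frac1n\bigr)^{(\sigma+1)/2}$, while $\frac{T_n(h)[(u_n+1/n)^\sigma-(1/n)^\sigma]}{(u_n+1/n)^\sigma}\le T_n(h)$ controls the singular term by $\|h\|_{L^1}$ and the Hardy term is $\le\frac{\lambda}{\Lambda_{N,s}}\cdot\frac{a_{N,s}}{2}\dint_\Omega\frac{[(u_n+1/n)^{(\sigma+1)/2}]^2}{|x|^{2s}}\,dx+o(1)$, the $o(1)$ collecting the lower‑order contributions of the $\frac1n$‑shift. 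Hence, precisely when $\frac{4\sigma}{(\sigma+1)^2}-\frac{\lambda}{\Lambda_{N,s}}>0$, this closes to an $n$‑uniform bound and gives $u^{(\sigma+1)/2}\in H^s_0(\Omega)$ in the limit. Without that sign condition one keeps only the truncated versions: testing with $\bigl(T_k(u_n)+\frac1n\bigr)^{\sigma}-\bigl(\frac1n\bigr)^{\sigma}$ and with $G_k(u_n)$, using that $\frac{T_k(t)^\sigma}{t^\sigma}\le1$ and $\frac{G_k(t)}{t^\sigma}\le\sup_{t>k}\frac{t-k}{t^\sigma}<\infty$ (finite because $\sigma>1$), and absorbing the lower‑order Hardy contributions by \eqref{Hardy} (resp. by the improved Hardy inequality at $\lambda=\Lambda_{N,s}$), yields, for each fixed $k$, $n$‑uniform bounds for $T_k^{(\sigma+1)/2}(u_n)$ and $G_k(u_n)$ in $H^s_0(\Omega)$; in particular $u_n$ is bounded in $L^1(\Omega)$ and, combining the two, $\frac{u}{|x|^{2s}}\in L^1(\Omega)$. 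Multiplying the equation by $\xi^2u_n$ with $\xi\in\mathcal{C}^\infty_0(\Omega)$ and using $u_n\ge u_1\ge c_{\mathrm{supp}\,\xi}>0$ together with the global $L^1$‑type controls gives the interior bound $u\in H^s_{loc}(\Omega)$ via Lemma \ref{compag}.

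Finally, one passes to the limit in the weak formulation: for $\varphi\in\mathcal{C}^\infty_0(\Omega)$ this is dominated convergence as above; for general $\phi\in\mathcal{T}$ one uses $|\phi|\le C\varphi_0$ and the identity (obtained by testing the equation for $u_n$ with $\varphi_0$) $\dint_\Omega\frac{T_n(h)\varphi_0}{(u_n+1/n)^\sigma}\,dx=\dint_\Omega u_n\,dx-\lambda\dint_\Omega\frac{u_{n-1}\varphi_0}{|x|^{2s}+1/n}\,dx$, whose right‑hand side converges by monotone convergence; together with Fatou's lemma and the already established weak formulation against $\mathcal{C}^\infty_0(\Omega)$, this forces $L^1$‑convergence of $\frac{T_n(h)\varphi_0}{(u_n+1/n)^\sigma}$ and hence of $\frac{T_n(h)\phi}{(u_n+1/n)^\sigma}$, giving a weak solution in the sense of Definition analogous to \eqref{test}. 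The step I expect to be the main obstacle is exactly the $\sigma>1$, $h\in L^1$ case: since the solution degenerates at $\partial\Omega$ (which is why only $u\in H^s_{loc}(\Omega)$, with $G_k(u)\in H^s_0(\Omega)$ and $T_k^{(\sigma+1)/2}(u)\in H^s_0(\Omega)$, can be expected) one must handle the uniform integrability of the singular source near the boundary rather than globally; a secondary delicate point is the careful bookkeeping of the $\frac1n$‑shifts and the correct use of the improved Hardy inequality at the borderline $\lambda=\Lambda_{N,s}$, where the bare Hardy term cannot be absorbed and the weaker space $W^{s,q}_0(\Omega)$, $q<2$, is unavoidable.
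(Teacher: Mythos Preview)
Your scheme is essentially the paper's: approximate, obtain $n$-uniform estimates by testing with $u_n$ (for $\sigma=1$), with $G_k(u_n)$ and $T_k^\sigma(u_n)$ (for $\sigma>1$), with $u_n^\sigma$ (for the extra conclusion), and pass to the limit via the monotone compactness Lemmas \ref{compa}--\ref{compag}. Two differences are worth flagging, neither fatal.

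\emph{Approximation.} The paper keeps the full Hardy term $\lambda u_n/|x|^{2s}$ (implicit in $u_n$, existence by minimization, uniqueness by Lemma \ref{compar}), while you iterate with $\lambda u_{n-1}/(|x|^{2s}+\tfrac1n)$. Both work; yours is more elementary at the existence step but forces you to carry the inequality $u_{n-1}\le u_n$ through every estimate. In the $G_k$--test the paper uses the exact identity $u_nG_k(u_n)=G_k^2(u_n)+kG_k(u_n)$ to split the Hardy term; with your $u_{n-1}$ you still get $u_{n-1}G_k(u_n)\le G_k^2(u_n)+kG_k(u_n)$, so the same absorption goes through. Note that the $T_k^\sigma$--estimate in the paper is not done in parallel with the $G_k$--estimate but \emph{after} it: the bound on $\int G_k^2(u_n)|x|^{-2s}$ is first used to control $\int u_n^2|x|^{-2s}$, and only then is the Hardy contribution in the $T_k^\sigma$--test (which is $\le k^{\sigma-1}\int u_n^2|x|^{-2s}$) bounded. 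Your phrase ``absorbing the lower-order Hardy contributions'' hides this dependency; make the order explicit.

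\emph{Local $H^s$ regularity.} For $u\in H^s_{loc}(\Omega)$ you propose testing with $\xi^2u_n$, $\xi\in\mathcal{C}^\infty_0(\Omega)$. The paper instead proves the pointwise inequality
\[
\bigl(T_k(u_n)(x)-T_k(u_n)(y)\bigr)^2\le C_0^{1-\sigma}\bigl(T_k^{\frac{\sigma+1}{2}}(u_n)(x)-T_k^{\frac{\sigma+1}{2}}(u_n)(y)\bigr)^2,\qquad (x,y)\in\Omega'\times\Omega',
\]
using $T_k(u_n)\ge C_0:=\min\{k,C(\Omega')\}$ on $\Omega'$, which immediately transfers the global $H^s_0$ bound on $T_k^{(\sigma+1)/2}(u_n)$ to a local $H^s$ bound on $T_k(u_n)$. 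Your cutoff route also works but requires controlling the commutator $[\xi^2,(-\Delta)^s]$ in the nonlocal setting; the paper's pointwise argument avoids this entirely and is cleaner here.
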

\begin{proof} Let $h_n:=T_n(h)$, the usual truncation of $h$, and define $u_n$ to be the unique
positive solution to the approximated problem
\begin{equation}\label{apro}
\left\{
\begin{array}{rcll}
(-\Delta)^{s}u_n &= &\lambda \dfrac{u_n}{|x|^{2s}}+\dfrac{h_n(x)}{(u_n+\frac{1}{n})^{\sigma}} &{\rm in}\; \Omega,\\
u_n &> & 0 &{\rm in}\; \Omega, \\
u_n &= & 0&{\rm in}\; (\ren\setminus\Omega).
\end{array}
\right.
\end{equation}
The existence follows by minimization and the uniqueness by using the result in Lemma \ref{compar}.
Since $T_n(h)$ is an increasing function in $n$, again by Lemma \ref{compar} we conclude that
$\{u_n\}_{n\in\mathbb{N}}$ is an increasing function in $n$. We divide the proof in two cases.
\medskip

{\bf First case: $\sigma=1$ and
$\l<\Lambda_{N,s}$.}

Taking $u_n$ as a test function in \eqref{apro} and using the Hardy
inequality, we obtain
\begin{equation*}\label{ess}
\frac{a_{N,s}}{2}\left(1-\frac{\l}{\Lambda_{N,s}}\right)\|u_n\|_{H^{s}_0(\Omega)}^2\leq\int_{\Omega}\dfrac{h_n
u_n}{u_n+\frac{1}{n}}\,dx\leq \int_{\Omega}h \,dx=C.
\end{equation*}

Thus $\{u_n\}_{n\in\mathbb{N}}$ is bounded in $H^{s}_0(\Omega)$ and then there exists
$u\in H^{s}_0(\Omega)$ such that, up to a subsequence, $u_n\rightharpoonup u$ weakly in
$H^{s}_0(\Omega)$ and $u_n\uparrow u$ strongly in $L^\eta(\Omega)$
for all $\eta< 2^*_s$.

Since $(-\Delta)^{s}u_n\ge 0$, using
the monotonicity of $\{u_n\}_{n\in\mathbb{N}}$ and the compactness Lemma
\ref{compa} we easily obtain that $u_n\to u$ strongly in
$H^{s}_0(\Omega)$. Hence we conclude that $u$ solves problem
\eqref{mainp}.
\medskip

{\bf Second case: $\sigma>1$.}

Using now $G_k(u_n)$ as a test function in
\eqref{apro} we have
\begin{equation*}\label{estim1}
\frac{a_{N,s}}{2}\iint_{D_\Omega}{\frac{|G_k(u_n(x))-G_k(u_n(y))|^2}{|x-y|^{N+2s}}}\,
dx\, dy
-\lambda\int_{\Omega}\dfrac{u_nG_k(u_n)}{|x|^{2s}}\,dx\leq\int_{\Omega}\dfrac{h_n
G_k(u_n)}{(u_n+\frac{1}{n})^\sigma}\,dx
\end{equation*}
and
$$\int_{\Omega}\dfrac{h_n G_k(u_n)}{(u_n+\frac{1}{n})^\sigma}\,dx\le
\frac{1}{k^{\sigma-1}}\int_{\Omega} h \,dx.
$$
Moreover, $u_nG_k(u_n)=G^2_k(u_n)+kG_k(u_n)$, and thus
\begin{equation*}\label{estim20}
\begin{split}
\frac{a_{N,s}}{2}&\iint_{D_\Omega}\frac{|G_k(u_n(x))-G_k(u_n(y))|^2}{|x-y|^{N+2s}}\,
dx\, dy-\lambda\int_{\Omega}\dfrac{G_k^2(u_n)}{|x|^{2s}}\,dx\\
& \qquad \;\;\;\leq\lambda k\int_{\Omega} \dfrac{G_k(u_n)}{|x|^{2s}}\,dx
+\frac{1}{k^{\sigma-1}}\int_{\Omega} h \,dx.
\end{split}
\end{equation*}
Taking into account that $\lambda<\Lambda_{N,s}$, by the Hardy-Sobolev inequality we
obtain
\begin{equation*}\label{estim4}
C\iint_{D_\Omega}{\frac{|G_k(u_n(x))-G_k(u_n(y))|^2}{|x-y|^{N+2s}}}\, dx\,
dy\leq \lambda k\int_{\Omega} \dfrac{G_k(u_n)}{|x|^{2s}}\,dx
+C(k,h),
\end{equation*}
and applying Young and Hardy-Sobolev inequalities on the integral in the right hand side
we reach that
\begin{equation*}\label{estim5}
\iint_{D_\Omega}{\frac{|G_k(u_n(x))-G_k(u_n(y))|^2}{|x-y|^{N+2s}}}\, dx\,
dy\leq C(k,\lambda,\Lambda_{N,s},h).
\end{equation*}
Therefore $\{G_k(u_n)\}_{n\in\mathbb{N}}$ is uniformly bounded in $H^s_0(\Omega)$, and again by the Hardy-Sobolev
inequality,
$$
\int_{\Omega}\dfrac{G^2_k(u_n(x))}{|x|^{2s}}dx\le
C(k,\lambda,\Lambda_{N,s},h).
$$
Then we get
$$
\int_{\Omega}\dfrac{u^2_n(x)}{|x|^{2s}}dx=\int_{\Omega}\dfrac{T_k^2(u_n(x))}{|x|^{2s}}dx+\int_{\Omega}
\dfrac{G^2_k(u_n(x))}{|x|^{2s}}dx
+2\int_\Omega\frac{T_k(u_n)G_k(u_n)}{|x|^{2s}}\,dx\le
C(k,\lambda,\Lambda_{N,s},h).
$$

\noindent Likewise, testing with $T_k^\sigma(u_n)$ in
\eqref{apro}, it follows that
\begin{equation*}\label{estim6}
\begin{split}
\dyle
\frac{a_{N,s}}{2}&\iint_{D_\Omega}{\frac{(T_k^\sigma(u_n(x))-T_k^\sigma(u_n(y)))(u_n(x)-u_n(y))}{|x-y|^{N+2s}}}
dx\, dy\\
&\leq\dyle
\lambda\int_{\Omega}\dfrac{u_nT^{\sigma}_k(u_n)}{|x|^{2s}}\,dx+\int_{\Omega}\dfrac{h_n
T^\sigma_k(u_n)}{(u_n+\frac{1}{n})^\sigma}\,dx\\
&\le \dyle
k^{\sigma-1}\lambda\int_{\Omega}\dfrac{u^2_n}{|x|^{2s}}\,dx+\int_{\Omega}h_n
\,dx\le C(k,\lambda,\Lambda_{N,s},h),
\end{split}
\end{equation*}
and applying Lemma \ref{alg} we conclude
$$
\iint_{D_\Omega}{\frac{(T^{\frac{\sigma+1}{2}}_k(u_n(x))-T^{\frac{\sigma+1}{2}}_k(u_n(y)))^2}{|x-y|^{N+2s}}}\,
dx\, dy\leq C(k,\lambda,\Lambda_{N,s},h,\sigma).
$$
Thus $\{T^{\frac{\sigma+1}{2}}_k(u_n)\}_{n\in\mathbb{N}}$ is bounded in
$H^{s}_0(\Omega)$. Furthermore, the strong maximum principle provides that
$$u_n\ge u_1\ge c(K)>0, \hbox{ for any compact set  } K\subset \O.$$

\textit{Claim}.-  $\{T_k(u_n)\}_{n\in\mathbb{N}}$ is bounded in
$H^{s}_{loc}(\Omega)$.

Since $\{u_n\}_{n\in\mathbb{N}}$ is an increasing
sequence, then $T_k(u_n)\ge T_k(u_1)$,  and for all $\Omega'\subset\subset \Omega$, $u_1\ge C(\Omega')$. Thus,
$$T_k(u_n)\ge \min\{k,C(\Omega')\}=: {C_0}.$$

\noindent For $(x,y)\in \Omega'\times\Omega'$, we define
$\alpha_n:=\dfrac{T_k(u_n(x))}{{C_0}}$ and
$\beta_n:=\dfrac{T_k(u_n(y))}{{C_0}}$. It is clear that
$\alpha_n, \beta_n\ge 1$.
Therefore the following inequality holds,
\begin{equation}\label{alphabeta}
(\alpha_n-\beta_n)^2\le (\alpha^{\frac{\sigma+1}{2}}_n
-\beta^{\frac{\sigma+1}{2}}_n)^2.
\end{equation}
Indeed, if $\alpha_n=\beta_n$ the estimate is trivial. Otherwise, without loss of generality  we can assume $\alpha_n>\beta_n\ge 1$.
Let $0<x:=\frac{\beta_n}{\alpha_n}<1$. Since $\sigma>1$, we easily obtain that
$$
0\le 1-x\le 1-x^{\frac{\sigma+1}{2}},
$$
and hence
$$
(1-x)^2\le (1-x^{\frac{\sigma+1}{2}})^2.
$$
Clearly $\alpha^2_n<\alpha^{\sigma+1}_n$, and thus
$$
\alpha^2_n(1-x)^2\le
\alpha^{{\sigma+1}}_n(1-x^{\frac{\sigma+1}{2}})^2.
$$
Recalling the definition of $x$, \eqref{alphabeta} follows.

Finally, by the definition of $\alpha_n$ and $\beta_n$, we
conclude that for $(x,y)\in \Omega'\times\Omega'$, we have
$$
(T_k(u_n(x))-T_k(u_n(y)))^2\le
{C_0}^{1-\sigma}(T^{\frac{\sigma+1}{2}}_k(u_n(x))-T^{\frac{\sigma+1}{2}}_k(u_n(y)))^2.
$$
Thus the claim follows using the boundedness of $\{T^{\frac{\sigma+1}{2}}_k(u_n)\}_{n\in\mathbb{N}}$
in $H^{s}_0(\Omega)$.

Hence combining the estimates above, we obtain that $\{u_n\}_{n\in\mathbb{N}}$ is
bounded in $H^{s}_{loc}(\Omega)$ and then, up to a subsequence, there exists
$u\in H^{s}_{loc}(\Omega)$ such that
$$\begin{array}{rcl}
u_n&\rightharpoonup& u \hbox{   weakly in } H^{s}_{loc}(\Omega),\\
G_k(u_n)&\rightharpoonup& G_k(u) \hbox{  weakly  in }   H^{s}_{0}(\Omega) \hbox{    and  }\\
T^{\frac{{\sigma}+1}{2}}_k(u) &\rightharpoonup&T^{\frac{{\sigma}+1}{2}}_k(u)  \hbox{  weakly  in }   H^{s}_{0}(\Omega).
\end{array}
$$
Applying  the compactness result in Lemma \ref{compag} we obtain
that $u_n\to u$ strongly in $H^{s}_{loc}(\Omega)$.
Thus $u_n\uparrow
u$ strongly in $L^r(\Omega)$ for all $1\le r< 2^*_s$.

Let $\phi\in \mathcal{T}$, where $\mathcal{T}$ was defined in \eqref{test}. Testing with $\phi$ in \eqref{apro}, it follows that
\begin{equation}\label{lastt}
\int_\Omega(-\Delta)^{s}u_n \phi dx=\lambda \int_\Omega
\dfrac{u_n\phi }{|x|^{2s}}dx+\int_\Omega\dfrac{\phi
 h_n(x)}{(u_n+\frac{1}{n})^\sigma}dx.
\end{equation}
By the estimates above we reach that, when $n$ goes to $+\infty$,
$$
\lambda \int_\Omega \dfrac{u_n\phi
}{|x|^{2s}}+\int_\Omega\dfrac{\phi
h_n(x)}{(u_n+\frac{1}{n})^{\sigma}}dx\to \lambda \int_\Omega \dfrac{u\phi
}{|x|^{2s}}+\int_\Omega\dfrac{\phi h(x)}{u^{\sigma}}dx<+\infty.
$$
Moreover, we have
$$
\int_\Omega(-\Delta)^{s}u_n \phi dx=\int_\Omega
u_n(-\Delta)^{s}\phi dx\to \int_\Omega u(-\Delta)^{s}\phi dx,
$$
as $n\rightarrow +\infty$. Therefore, passing to the limit in \eqref{lastt},
$$
\dint_{\Omega}(-\Delta)^{s}u\phi\,dx=\lambda \int_\Omega
\dfrac{u\phi}{|x|^{2s}}\,dx+\dint_{\Omega}\dfrac{h(x)\phi}{u^\sigma}\,dx,
$$
i.e., $u$ is a weak solution.

Finally, for every $\lambda<\Lambda_{N,s}$ take $\sigma$ such
that
$\Big[\dfrac{4\sigma}{(\sigma+1)^2}-\dfrac{\lambda}{\Lambda_{N,s}}\Big]>0$.
By using $u_n^{\sigma}$ as a test function in \eqref{apro}, it follows
that
\begin{eqnarray*}
\iint_{D_\Omega}\dfrac{(u_n(x)-u_n(y))(u^{\sigma}_n(x)-u^{\sigma}_n(y))}{|x-y|^{N+2s}}\,dx\,dy\le
\lambda \int_{\Omega}\dfrac{u^{\sigma+1}_n}{|x|^{2s}}dx+
\int_{\Omega} h_n dx.
\end{eqnarray*}
By  Lemma  \eqref{alg}, we get
$$
(u_n(x)-u_n(y))(u^{\sigma}_n(x)-u^{\sigma}_n(y))\ge
\frac{4\sigma}{(\sigma+1)^2}(u^{\frac{\sigma+1}{2}}_n(x)-u^{\frac{\sigma+1}{2}}_n(y))^2,
$$
and hence, by the Hardy inequality,
$$
\frac{a_{N,s}}{2}\left(\frac{4\sigma}{(\sigma+1)^2}-\frac{\lambda}{\Lambda_{N,s}}\right)
\iint_{D_\Omega}\dfrac{(u^{\frac{\sigma+1}{2}}_n(x)-u^{\frac{\sigma+1}{2}}_n(y))^2}{|x-y|^{N+2s}}
\,dx\,dy\le \|h\|_{L^1(\Omega)}.$$
Therefore $u^{\frac{\sigma+1}{2}}\in H^{s}_{0}(\Omega)$   and this is the sense how the boundary value is reached.
\end{proof}

We now deal with the case $\sigma<1$. If $h\in L^{(\frac{2^*_s}{1-\sigma s})'}(\Omega)$, the existence of a positive energy solution can be proved proceeding as in the case $\sigma=1$. However, our goal from now on will be to study the solvability when $h$ has less regularity. Indeed, we have the following result.
\begin{Theorem}\label{phop}
Assume $\sigma<1$, $\lambda<\Lambda_{N,s}$ and $h\in
L^1(\Omega,|x|^{-(1-\sigma)\gamma} dx)$, $h\gneq 0$. Then  problem
\eqref{mainp} has at least a weak solution.
\end{Theorem}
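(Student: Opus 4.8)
The plan is to combine a monotone approximation with the ground state transformation of Lemma \ref{GS}; the virtue of the transformed problem is that the Hardy term is \emph{absorbed} into the positive weighted operator $L_\gamma$, so the a priori estimates will need no compatibility relation between $\lambda$ and $\sigma$, whereas testing the original equation directly would require $\frac{4\sigma}{(\sigma+1)^2}>\frac{\lambda}{\Lambda_{N,s}}$. \emph{Step 1 (approximation).} For $h_n:=T_n(h)$ let $u_n\in L^\infty(\Omega)\cap H^s_0(\Omega)$ be the unique positive solution of $(-\Delta)^s u_n=\lambda|x|^{-2s}u_n+h_n(x)(u_n+\tfrac1n)^{-\sigma}$ in $\Omega$, $u_n=0$ in $\ren\setminus\Omega$, obtained by minimization exactly as in the proof of Theorem \ref{m1}; uniqueness and the monotonicity $u_n\uparrow$ follow from Lemma \ref{compar}, since $t\mapsto h_n(t(t+\tfrac1n)^\sigma)^{-1}$ is decreasing and $h_{n+1}(t+\tfrac1{n+1})^{-\sigma}\ge h_n(t+\tfrac1n)^{-\sigma}$. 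As $h\gneq0$, $u_1\gneq0$ is a weak supersolution of $(-\Delta)^sw-\lambda|x|^{-2s}w\ge0$, so Lemma \ref{Lm:singularity_elip} gives $u_n\ge u_1\ge C|x|^{-\gamma}$ on a ball $B_\delta(0)$; moreover $u_1\ge c(K)>0$ on each $K\subset\subset\Omega$, and $u_1\ge c\,d(x,\partial\Omega)^s$ near $\partial\Omega$ by comparison from below with the $s$-harmonic function having datum $h_1(u_1+1)^{-\sigma}$ (fractional Hopf lemma).

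\emph{Step 2 (key uniform estimate).} Put $v_n:=|x|^\gamma u_n$, which lies in $Y^{s,\gamma}_0(\Omega)$ and is bounded ($u_n\in L^\infty$, $\gamma>0$). By Lemma \ref{GS} and \eqref{equi}, $v_n$ solves $L_\gamma v_n=|x|^{-(1-\sigma)\gamma}h_n(v_n+|x|^\gamma/n)^{-\sigma}$ in $\Omega$, $v_n=0$ outside, and the hypothesis on $h$ says exactly that $\tilde h:=|x|^{-(1-\sigma)\gamma}h\in L^1(\Omega)$. Testing this equation with $(v_n+d)^\sigma-d^\sigma$, $d>0$ — a Lipschitz function of $v_n$ vanishing at the origin, hence admissible in $Y^{s,\gamma}_0(\Omega)$ — estimating the left-hand side below by Lemma \ref{alg} with $a=\sigma$, the right-hand side above by $v_n^\sigma\le(v_n+|x|^\gamma/n)^\sigma$, and then letting $d\to0^+$ via monotone convergence and Fatou, one obtains
$$\frac{4\sigma}{(\sigma+1)^2}\,\big\|v_n^{\frac{\sigma+1}{2}}\big\|_{Y^{s,\gamma}_0(\Omega)}^2\le C\dint_\Omega\tilde h\,dx,$$
\emph{uniformly in $n$ and for every $\lambda\in(0,\Lambda_{N,s})$}, since $L_\gamma$ carries no subtracted term. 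The weighted Sobolev inequality (Proposition \ref{Sobolev00}) then yields $\int_\Omega u_n^{p_0}|x|^{-\beta_0}\,dx\le C$ with $p_0:=\tfrac{(\sigma+1)N}{N-2s}>1$ and $\beta_0:=\tfrac{(1-\sigma)\gamma N}{N-2s}>0$; as $|x|^{-\beta_0}$ is bounded below on the bounded set $\Omega$, $\{u_n\}$ is bounded in $L^{p_0}(\Omega)$.

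\emph{Step 3 (passage to the limit).} Splitting $\Omega$ over $B_r(0)$ and $\Omega\setminus B_r(0)$ and applying H\"older with the last bound, the elementary inequality $\sigma(2s-N)<(1-\sigma)\gamma$ (true because $2s<N$, $0<\sigma<1$, $\gamma>0$) gives $\int_\Omega u_n|x|^{-2s}\,dx\le C$ uniformly; hence, by monotone convergence, $u_n\uparrow u$ in $L^{p_0}(\Omega)$ and $u_n|x|^{-2s}\uparrow u|x|^{-2s}$ in $L^1(\Omega)$, with $u>0$ in $\Omega$, $u=0$ outside. Fix $\phi\in\mathcal T$, $\phi\ge0$; the energy identity for $u_n$ reads $\int_\Omega u_n(-\Delta)^s\phi\,dx-\lambda\int_\Omega u_n\phi|x|^{-2s}\,dx=\int_\Omega h_n\phi(u_n+\tfrac1n)^{-\sigma}\,dx$. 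The first two integrals pass to the limit by the $L^1$ convergences above (and $(-\Delta)^s\phi,\phi\in L^\infty$), while in the third, $h_n\phi(u_n+\tfrac1n)^{-\sigma}\to h\phi u^{-\sigma}$ a.e. and is dominated by an $L^1$ function: near $0$ by $Ch|x|^{\gamma\sigma}\le Ch|x|^{-(1-\sigma)\gamma}$ (from $u_n\ge C|x|^{-\gamma}$); on compacts of $\Omega\setminus\{0\}$ by $C_K\|\phi\|_\infty h$; near $\partial\Omega$ by $Ch\,d(x,\partial\Omega)^{s(1-\sigma)}\le C'h$ (from $u_1\ge c\,d^s$ and $\phi\le Cd^s$ for $\phi\in\mathcal T$). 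Dominated convergence gives $h\phi u^{-\sigma}\in L^1(\Omega)$ and the weak formulation of \eqref{mainp} in the sense of Definition \ref{veryweak}; letting $d\to0$ in the estimate of Step 2 and using Fatou also shows $(|x|^\gamma u)^{(\sigma+1)/2}\in Y^{s,\gamma}_0(\Omega)$, which encodes the homogeneous boundary datum.

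\emph{Main difficulty.} The heart of the matter is the $n$-uniform a priori bound when $\lambda$ is close to $\Lambda_{N,s}$: the direct test function $u_n^\sigma$ produces the coefficient $\tfrac{4\sigma}{(\sigma+1)^2}-\tfrac{\lambda}{\Lambda_{N,s}}$, possibly $\le 0$. Passing to $L_\gamma$ removes this obstruction but shifts the work onto controlling the weights $|x|^{-(1-\sigma)\gamma}$ and $|x|^{-2s}$ (Steps 2--3) and, above all, the behaviour near $\partial\Omega$ of the singular right-hand side, where the matching decay rates $\phi\le Cd^s$ and $u\ge cd^s$ are precisely what preserve integrability.
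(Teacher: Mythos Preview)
Your proof is correct and follows the paper's own strategy: approximate, apply the ground state transformation $v_n=|x|^\gamma u_n$, test the $L_\gamma$-equation with $v_n^\sigma$, and use the weighted Sobolev inequality to obtain a uniform bound on $v_n^{(\sigma+1)/2}$ in $Y^{s,\gamma}_0(\Omega)$ that is independent of any relation between $\lambda$ and $\sigma$. The paper then invokes the compactness Lemma~\ref{compa} (applied to the concave power $v_n^{(\sigma+1)/2}$, noting $L_\gamma(v_n^{(\sigma+1)/2})\ge0$) to pass to the limit in the weighted formulation, and simply states that $u_0:=|x|^{-\gamma}v_0$ is a weak solution with $u_0/|x|^{2s}\in L^1$.

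Your write-up is in fact tighter than the paper's in three respects: (i) you regularize the test function as $(v_n+d)^\sigma-d^\sigma$, which is Lipschitz and vanishes at $0$, whereas the paper uses $v_n^\sigma$ directly, which for $\sigma<1$ is not Lipschitz near $0$ and strictly speaking requires exactly this justification; (ii) you verify explicitly, via H\"older and the inequality $\sigma(2s-N)<(1-\sigma)\gamma$, that $u/|x|^{2s}\in L^1(\Omega)$, a point the paper asserts without computation; (iii) you handle the passage to the limit in $\int h_n\phi(u_n+\tfrac1n)^{-\sigma}$ up to $\partial\Omega$ by matching $\phi\le Cd^s$ against the fractional Hopf bound $u_1\ge c\,d^s$, an issue the paper does not address since it passes to the limit in the $L_\gamma$-equation (whose weak formulation, Definition~\ref{super00}, is local and uses test functions supported in $\Omega_1\subset\subset\Omega$). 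Both routes are valid; yours yields the weak formulation of Definition~\ref{veryweak} directly for test functions $\phi\in\mathcal T$, at the small cost of invoking the fractional Hopf lemma, which is external to the paper.
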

\begin{proof} Let $\{h_n\}_{n\in\mathbb{N}}$ be such that $h_n\ge 0$ and $h_n \uparrow h$ strongly in $L^1(\Omega)$. Define $u_n$ as the unique positive solution to
the approximated problem \eqref{apro}. Then by setting $v_n:=|x|^{\gamma}u_n$, it follows that $v_n$
satisfies
\begin{equation}\label{peso}
L_\gamma(v_n)=|x|^{-\gamma}\frac{h_n(x)}{\left(|x|^{-\gamma}v_n+\frac{1}{n}\right)^\sigma}
\leq|x|^{-(1-\sigma)\gamma}\frac{h_n}{v^\sigma_n}\qquad\hbox{ in }\Omega,
\end{equation}
where $L_\gamma$ was defined in \eqref{Ltilde0}. Using $v_n^\sigma$ as a test function in \eqref{peso} we obtain that
$$
\frac{a_{N,s}}{2}\frac{4\sigma}{(\sigma+1)^{2}}\iint_{D_\Omega}{\frac{(v_n^{\frac{\sigma+1}{2}}(x)-v_n^{\frac{\sigma+1}{2}}(y))^2}{|x-y|^{N+2s}}}
\dfrac{dx}{|x|^\g}\dfrac{dy}{|y|^\g}
\le\int_{\Omega}\dfrac{h_n}{|x|^{(1-\sigma)\gamma}} \,dx\leq C,
$$
with $C$ independent of $n$.

Thus we conclude that the sequence
$\{v_n^{\frac{\sigma+1}{2}}\}_{n\in\mathbb{N}}$ is bounded in the weighted
Sobolev space $ Y^{s,\g}_0(\O)$ and hence there exists $u_0$ such that, up to a subsequence,
$$v_n^{\frac{\sigma+1}{2}}\rightharpoonup v_0^{\frac{\sigma+1}{2}}\;\;\hbox{
weakly in } Y^{s,\g}_0(\O).$$ Since $L_{\g}(v_n)\ge 0$, then using
the fact that $\frac{\sigma+1}{2}<1$, also
$L_{\g}(v^{\frac{\sigma+1}{2}}_n)\ge 0$. Hence by the monotonicity
of $v_n$ and Lemma \ref{compa}, we obtain that
$$v_n^{\frac{\sigma+1}{2}}\to v_0^{\frac{\sigma+1}{2}}\;\;\hbox{
strongly in }Y^{s,\g}_0(\O).$$

\noindent Passing to the limit in \eqref{peso}, it follows that $v_0$
solves
\begin{equation*}
\left\{\begin{array}{rcll}
L_\g(v_0)&=&|x|^{-(1-\sigma)\gamma}\frac{h}{v_0^\sigma} &\hbox{  in } \Omega,\\
v_0&=&0 &\hbox{  in } \mathbb{R}^N\setminus\Omega,
\end{array}
\right.
\end{equation*}
in the weak sense.
Defining $u_0:=|x|^{-\gamma}v_0$, then $\lambda\dfrac{u_0}{|x|^{2s}}\in
L^1(\Omega)$ and $u_0$ is a weak solution of problem \eqref{mainp}.
\end{proof}

To end this section, we consider the problem in the whole space, that is, $\Omega=\ren$. Then we will work in
the space $\dot{H}^{s}(\ren)$ defined the completion of
$\mathcal{C}^\infty_0(\mathbb{R}^N)$ with respect to the Gagliardo seminorm
$$[\phi]_{\dot{H}^{s}(\ren)}=\Big(\int_{\mathbb{R}^N}\int_{\mathbb{R}^N}\dfrac{(\phi(x)-\phi(y))^2}{|x-y|^{N+2s}} \,dxdy\Big)^{\frac 12}.
$$
We obtain the following existence result.
\begin{Theorem}\label{rnn}
Consider the problem
\begin{equation}\label{rn}
\left\{
\begin{array}{rcll}
(-\Delta)^s u& = &\lambda \dfrac{u}{|x|^{2s}}+\dfrac{h(x)}{u^\sigma} &{\rm in}\; \mathbb{R}^N,\\
u &> & 0 &{\rm in}\; \mathbb{R}^N.
\end{array}
\right.
\end{equation}
Then
\begin{enumerate}
\item[(i)] If $\sigma=1$, then for all $h\in L^1(\mathbb{R}^N)$,
problem \eqref{rn} has a solution $u\in \dot{H}^{s}(\ren)$ .

\item[(ii)] If
$\sigma>1$, then for all $h\in L^1(\mathbb{R}^N)$, problem
\eqref{rn} has a weak solution $u$ such that $G_k(u)\in
\dot{H}^{s}(\ren)$ and $T_k^{\frac{\sigma+1}{2}}(u)\in \dot{H}^{s}(\ren)$,
for all $k>0$. Moreover, if
\begin{equation}\label{lambsig}
{{4\sigma}\over{(\sigma+1)^2}}-{{\lambda}\over
{\Lambda_{N,s}}}>0,
\end{equation}
then $u^{\frac{\sigma+1}{2}}\in
\dot{H}^{s}(\ren)$.

\item[(iii)] If $\sigma<1$ and $h\in L^m(\mathbb{R}^N)$ with
$m=(\frac{2^*_s}{1-\sigma})'$, then problem \eqref{rn} has a
solution $u$ such that $u\in\dot{H}^{s}(\ren)$.

\item[(iv)] If $\sigma<1$ and \eqref{lambsig} holds, then for all $h\in L^1(\mathbb{R}^N)$
problem \eqref{rn} has a weak solution $u$ such that
$u^{\frac{\sigma+1}{2}}\in \dot{H}^{s}(\ren)$.
\end{enumerate}
\end{Theorem}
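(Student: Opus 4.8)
The plan is to realize \eqref{rn} as the limit, along an exhaustion $B_n\uparrow\ren$ by balls, of the bounded--domain problems already solved in Theorems \ref{m1} and \ref{phop}. For each $n$ we let $u_n\ge 0$ be the unique solution of the approximated problem \eqref{apro} posed on $\Omega=B_n$ with $h$ replaced by $h_n:=T_n(h)\chi_{B_n}$; existence is provided by Theorem \ref{m1} in cases (i)--(ii) and by Theorem \ref{phop} in cases (iii)--(iv), and uniqueness by Lemma \ref{compar}. Extending $u_n$ by $0$ outside $B_n$, it becomes a weak subsolution of the analogous problem on $B_{n+1}$, so Lemma \ref{compar} yields that $\{u_n\}_n$ is nondecreasing; we set $u:=\lim_n u_n$ and must show that $u$ lies in the space asserted in each item and solves \eqref{rn}.

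Each item will then be reduced to a uniform a priori estimate obtained with exactly the test functions used in the proofs of Theorems \ref{m1} and \ref{phop}, now employing the Hardy inequality \eqref{Hardy} and the Sobolev inequality on the whole space. For $\sigma=1$ (and $\lambda<\Lambda_{N,s}$) we test with $u_n$ and, using $h_nu_n/(u_n+\tfrac1n)\le h$ and the gap $1-\lambda/\Lambda_{N,s}>0$, bound $\|u_n\|_{\dot{H}^{s}(\ren)}$ by $C\|h\|_{L^1(\ren)}$. For $\sigma>1$ we test successively with $G_k(u_n)$, with $T_k^{\sigma}(u_n)$ and, when \eqref{lambsig} holds, with $u_n^{\sigma}$, combining the Hardy--Sobolev inequality with the algebraic inequality of Lemma \ref{alg}; this gives uniform bounds for $G_k(u_n)$ and $T_k^{\frac{\sigma+1}{2}}(u_n)$ in $\dot{H}^{s}(\ren)$ and, via the pointwise estimate $\alpha^2\le\alpha^{\sigma+1}$ for $\alpha\ge 1$ already exploited in Theorem \ref{m1}, a bound for $u_n$ in $\dot{H}^{s}$ on any set away from the origin. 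For $\sigma<1$ with $h\in L^m(\ren)$, $m=(\frac{2^*_s}{1-\sigma})'$, we test with $u_n$ and estimate $\int_{\ren}h_nu_n^{1-\sigma}\le\|h\|_{L^m}\|u_n\|_{L^{2^*_s}}^{1-\sigma}$ (because $(1-\sigma)m'=2^*_s$), closing the estimate in $\dot{H}^{s}(\ren)$ by Young's inequality. Finally, for $\sigma<1$ with merely $h\in L^1(\ren)$ and \eqref{lambsig}, we test with $u_n^{\sigma}$, apply Lemma \ref{alg} and \eqref{Hardy}, and use \eqref{lambsig} to obtain
$$
\Big(\frac{4\sigma}{(\sigma+1)^2}-\frac{\lambda}{\Lambda_{N,s}}\Big)\int_{\ren}\int_{\ren}\frac{\big(u_n^{\frac{\sigma+1}{2}}(x)-u_n^{\frac{\sigma+1}{2}}(y)\big)^2}{|x-y|^{N+2s}}\,dx\,dy\le C\,\|h\|_{L^1(\ren)},
$$
that is, a uniform bound for $u_n^{\frac{\sigma+1}{2}}$ in $\dot{H}^{s}(\ren)$.

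For the passage to the limit we will combine the monotone convergence theorem with the compactness Lemmas \ref{compa} and \ref{compag}, which remain valid on $\ren$ (with $\gamma=0$) since their proofs only use pointwise inequalities and the control of the Gagliardo seminorm. Since $(-\Delta)^s u_n\ge 0$, this yields $u_n\to u$ strongly in $\dot{H}^{s}(\ren)$ in cases (i) and (iii), and locally in case (ii); and since, for $\sigma<1$, the concavity of $t\mapsto t^{\frac{\sigma+1}{2}}$ forces $(-\Delta)^s\big(u_n^{\frac{\sigma+1}{2}}\big)\ge 0$, the same lemma gives strong $\dot{H}^{s}(\ren)$ convergence of $u_n^{\frac{\sigma+1}{2}}$ in case (iv). Monotone convergence also shows $u/|x|^{2s}\in L^1_{\loc}(\ren)$ and $h_n/(u_n+\tfrac1n)^{\sigma}\uparrow h/u^{\sigma}$ in $L^1_{\loc}(\ren)$, so that testing \eqref{apro} against $\phi\in\mathcal{C}^\infty_0(\ren)$ --- moving $(-\Delta)^s$ onto $\phi$ and using the decay $|(-\Delta)^s\phi(x)|\le C(1+|x|)^{-N-2s}$ to control the tail of $\int u_n(-\Delta)^s\phi$ --- and letting $n\to\infty$ identifies $u$ as a weak solution of \eqref{rn} with the stated regularity; in case (ii) the blow--up $u\simeq|x|^{-\gamma}$ near the origin provided by Lemma \ref{Lm:singularity_elip} is precisely the reason why only $G_k(u)$ and $T_k^{\frac{\sigma+1}{2}}(u)$ can be claimed to belong to $\dot{H}^{s}(\ren)$. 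The hardest point will be this last step on the unbounded domain: one must verify that the pointwise inequalities underlying Lemma \ref{compa} still close with no compactness lost at infinity and, for $\sigma>1$, separate the analysis near the origin from the analysis near infinity, which is exactly why the conclusions in item (ii) are only local.
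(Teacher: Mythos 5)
Your proposal follows essentially the same approach as the paper: exhaustion of $\ren$ by balls $B_n(0)$, approximated problems with truncated $h$ and regularized Hardy potential, the same test functions $u_n$, $G_k(u_n)$, $T_k^\sigma(u_n)$, $u_n^\sigma$ combined with the Hardy--Sobolev inequality and Lemma~\ref{alg}, monotone passage to the limit, and the compactness Lemma~\ref{compa} adapted to $\gamma=0$ on the whole space. The paper's own proof is considerably terser (it largely defers to the bounded-domain computations in Theorems~\ref{m1} and~\ref{phop}), while you spell out steps such as the decay of $(-\Delta)^s\phi$ in the tail estimate and the concavity argument showing $(-\Delta)^s\big(u_n^{\frac{\sigma+1}{2}}\big)\ge 0$ for $\sigma<1$ (which the paper uses implicitly through the argument of Theorem~\ref{phop}); these are elaborations rather than deviations.
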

\begin{proof}
Consider $u_n$ to be the unique positive solution to the
approximated problem
\begin{equation}\label{aprn}
\left\{
\begin{array}{rcll}
(-\Delta)^su_n&=&\lambda \dfrac{u_n}{|x|^{2s}+\frac{1}{n}}+\dfrac{h_n(x)}{(u_n+\frac{1}{n})^\sigma} &{\rm in}\; B_n(0),\\
u_n&>&0 &{\rm in}\;  B_n(0), \\
u_n&=&0&{\rm in}\; \ren \setminus B_n(0).
\end{array}
\right.
\end{equation}
It is clear that $u_n$ is increasing with $n$.

If $\sigma=1$, taking $u_n$ as a test function in \eqref{aprn} and
using the Hardy-Sobolev inequality it follows that
$$
\dfrac{a_{N,s}}{2}{}\left(1-\frac{\l}{\Lambda_{N,s}}\right)[u_n]_{\dot{H}^{s}(\ren)}^2\le C.
$$
Hence $\{u_n\}_{n\in\mathbb{N}}$ is uniformly bounded in $\dot{H}^{s}(\ren)$ and then, up to a subsequence,
$u_n\rightharpoonup u$ weakly in $X^s(\mathbb{R}^N)$, where $u$
solves \eqref{rn}. Using the monotonicity of $u_n$ and a straightforward adaptation of Lemma \ref{compa} we can
prove that $u_n\to u$ strongly in $\dot{H}^{s}(\ren)$, which proves (i), and (iii) similarly follows.

To prove (ii) we take $G_k(u_n)$ as a test
function in \eqref{aprn} and performing the same computations as in
the proof of Theorem \ref{m1} we conclude.

Finally, (iv) follows closely using the arguments in the
proof of Theorem \ref{m1}. In particular, the existence of $u\in L^1(\Omega)$ is a consequence of the uniform bounds
of $\{G_k(u_n)\}_{n\in\mathbb{N}}$ and $\{T_k^{\frac{\sigma+1}{2}}(u_n)\}_{n\in\mathbb{N}}$ in $\dot{H}^s(\R^N)$ and
the fact that $\frac{\sigma+1}{2}<1$.
\end{proof}

\begin{remark} In a similar way to the results in \cite{BMP} the problem
\begin{equation}\label{ictp}
\left\{
\begin{array}{rcll}
(-\Delta)^{s}u &= &\lambda \dfrac{u}{|x|^{2s}}+\mu\dfrac{h(x)}{u^{{\sigma}}}+u^p &{\rm in}\; \Omega,\\
u &> & 0 &{\rm in}\;\, \Omega, \\
u &= &0 &{\rm in}\,\,   \ren\setminus\Omega,
\end{array}
\right.
\end{equation}
can be analyzed. In fact the existence of a minimal solution for  $p<p_+(\lambda)$ and $\l\le \Lambda_{N,s}$ and $\mu$ small can be done with minor analytical changes. Indeed,

\begin{Theorem}\label{ictp2} Assume that in problem \eqref{ictp}  one of the following conditions holds:
\begin{enumerate}
\item[(i)] ${\sigma}<1$, $p<2^*_s-1$ and $h\in L^\theta(\Omega)$ for some $\theta\ge \frac{2^*_s}{2^*_s-(1-\sigma)}$.
\item[(ii)] $\sigma\ge 1$, $p<2^*_s-1$ and  $h\in L^{\frac{2^*_s}{2^*_s-(1-\sigma_1)}}(\Omega)$ for some $\sigma_1<1$.
\item[(iii)] $\sigma>0$, $2^*_s-1\le p<p_+(\lambda)$ and $h\in L^{\infty}(\Omega)$.
\end{enumerate}
Then there exists $\mu^*>0$ such that  for all $\mu<\mu^*$, problem \eqref{ictp} has a minimal weak solution $u$ and, moreover, for  all $\mu>\mu^*$, problem \eqref{ictp} has no positive solution.
\end{Theorem}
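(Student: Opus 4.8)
The plan is to treat $\mu$ as a bifurcation parameter and to combine the monotone approximation scheme already used for Theorems~\ref{m1}, \ref{phop} and \ref{rnn} with the classical sub/supersolution method. I would let $\mathcal{I}$ denote the set of $\mu>0$ for which \eqref{ictp} admits a positive weak solution, and first check that $\mathcal{I}$ is an interval issued from $0$: if $\mu'\in\mathcal{I}$ with solution $u_{\mu'}$ and $0<\mu\le\mu'$, then $u_{\mu'}$ is a supersolution of \eqref{ictp} at level $\mu$ (since $t\mapsto\mu h/t^{\sigma}$ is nonincreasing in $\mu$), while the solution of \eqref{mainp} with datum $\mu h$ --- which exists by Theorem~\ref{m1} when $\sigma\ge 1$, and by Theorem~\ref{phop} or the variational argument when $\sigma<1$ under the stated integrability of $h$, these classes being stable under multiplication by $\mu$ --- is a subsolution of \eqref{ictp}, because $u^p\ge 0$. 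Running a monotone iteration between these ordered barriers (on the regularized, non-singular problems, and with a Lipschitz modification of the nonlinearity where $p>1$ so that no monotonicity of $g(x,t)/t$ is needed), and using the comparison Lemma~\ref{compar} and the compactness Lemmas~\ref{compa}--\ref{compag} to pass to the limit, one gets a solution for $\mu$; the iteration started from $0$ produces the \emph{minimal} one, which lies below every supersolution and is increasing in $\mu$. Thus $\mathcal{I}=(0,\mu^*)$ or $(0,\mu^*]$, and it only remains to prove $0<\mu^*<\infty$.

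For $\mu^*>0$ I would exhibit a supersolution of \eqref{ictp} for $\mu$ small. Take a fixed $\zeta>0$ with $\zeta(x)\simeq|x|^{-\gamma}$ near the origin --- e.g.\ a truncation of $|x|^{-\gamma}$, or $|x|^{-\gamma}-|x|^{-\gamma'}$ with $0<\gamma'<\gamma$ --- for which, by the nonlocal radial computations behind Lemma~\ref{singularity}, $(-\Delta)^s\zeta-\lambda\zeta/|x|^{2s}\gtrsim|x|^{-\gamma-2s}$ on a small ball $B_\delta$ and $\ge c_0>0$ on $\Omega\setminus B_\delta$. The hypothesis $p<p_+(\lambda)=1+2s/\gamma$, i.e.\ $\gamma p<\gamma+2s$, is precisely what makes $(A\zeta)^p\lesssim A^p|x|^{-\gamma p}$ absorbable by $A|x|^{-\gamma-2s}$ near $0$ while $A$ is kept of order one; fixing such an $A$ and then taking $\mu$ small, the remaining source $\mu h/(A\zeta)^{\sigma}$ is absorbed as well. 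This is where the three alternatives on $h$ enter: $h\in L^{\infty}(\Omega)$ in case (iii) lets the inequality be closed purely in the weak sense of Definition~\ref{veryweak}, while the Lebesgue integrability of $h$ in cases (i)--(ii) keeps us inside the energy (or weak) framework. With $\overline{u}:=A\zeta$ a supersolution, the first step applies and $\mu\in\mathcal{I}$.

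For $\mu^*<\infty$ I would argue as in the nonexistence part of \cite{BMP}. If \eqref{ictp} admitted a positive supersolution $u$ for arbitrarily large $\mu$, I would test its inequality against the torsion-type function $\varphi$ solving \eqref{varphiProb} with right-hand side $1$, so that $\int_\Omega u\,dx=\int_\Omega\varphi\,\big(\mu h/u^\sigma+u^p\big)\,dx$. Using $\varphi\ge c|x|^{-\gamma}$ near $0$ from Lemma~\ref{Lm:singularity_elip}, together with the elementary bound $\mu h(x)\,t^{-\sigma}+t^p\ge c_{\sigma,p}\,(\mu h(x))^{p/(p+\sigma)}$ valid for all $t>0$, one obtains $\int_\Omega u\,dx\gtrsim\mu^{p/(p+\sigma)}$; confronting this with a Keller--Osserman--type universal a priori bound for supersolutions of $(-\Delta)^sw-\lambda w/|x|^{2s}\ge w^p$ on a fixed interior ball --- which controls $u$ there independently of $\mu$ --- gives a contradiction for $\mu$ large. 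The integrability assumptions on $h$ in (i)--(iii) are exactly what guarantee finiteness of all the integrals involved.

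I expect the main obstacle to be the supersolution construction of the second step. Near the origin the Hardy potential, the singular nonlinearity $u^{-\sigma}$ and the power $u^p$ compete simultaneously, and in case (iii) --- where $p$ may exceed the Sobolev exponent $2^*_s-1$ --- no energy estimate is available, so the whole argument must be carried out within the weak formulation with test functions in $\mathcal{T}$, relying solely on comparison and on the nonlocal radial estimates underlying Lemma~\ref{singularity}.
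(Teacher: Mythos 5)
Your skeleton --- interval structure via sub/supersolution iteration, a supersolution for small $\mu$, nonexistence for large $\mu$ --- matches the paper's organization, but both substantive steps diverge from the paper's in ways that leave real gaps.

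For $\mu^*>0$ you propose a single pointwise supersolution $\overline{u}=A\zeta$ with $\zeta\simeq|x|^{-\gamma}$ and claim it closes under all three hypotheses on $h$. This is essentially the paper's argument in case (iii), where $h\in L^\infty$ lets $\mu h/\overline{u}^\sigma$ be absorbed pointwise by $\overline{u}^p$ (the profile $w=C|x|^{-\gamma}$ solves $(-\Delta)^s w-\lambda w/|x|^{2s}=w^p$ exactly, and boundedness of $h$ does the rest). But in cases (i) and (ii) the datum $h$ is only in a Lebesgue class, so there is no pointwise bound on $h$ near the origin and the inequality $\mu h/(A\zeta)^\sigma\le (-\Delta)^s(A\zeta)-\lambda A\zeta/|x|^{2s}-(A\zeta)^p$ simply cannot be closed. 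The paper handles these cases differently: case (i) is variational --- the functional
$J_\mu(u)=\tfrac{a_{N,s}}{4}\iint\tfrac{|u(x)-u(y)|^2}{|x-y|^{N+2s}}-\tfrac{\lambda}{2}\int\tfrac{u^2}{|x|^{2s}}-\tfrac{\mu}{1-\sigma}\int h u_+^{1-\sigma}-\tfrac{1}{p+1}\int u_+^{p+1}$
has mountain-pass geometry for $\mu$ small and satisfies Palais--Smale since $p<2^*_s-1$, so Ambrosetti--Rabinowitz gives an energy solution; case (ii) is reduced to case (i) by extending $h$ by zero to a larger ball $B_R(0)\supset\supset\Omega$, solving there with some $\sigma_1<1$, and using the resulting solution $\breve{u}$ (bounded below on $\Omega$ by the strong maximum principle) as a supersolution on $\Omega$. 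Neither mechanism appears in your proposal, and without one of them the supersolution step fails for $L^\theta$ data.

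For $\mu^*<\infty$ your argument hinges on a Keller--Osserman-type interior a priori bound, uniform in $\mu$, for supersolutions of $(-\Delta)^s w-\lambda w/|x|^{2s}\ge w^p$. For this nonlocal operator with the Hardy singularity that bound is far from immediate and is neither proved nor cited; as stated it is a gap, not a reduction to known facts. The paper's route is elementary and self-contained: from $(-\Delta)^s u_\mu\ge \mu\,h_1/(u_\mu+\mu^\theta)^\sigma$ with $\theta=1/(1+\sigma)$ it deduces $u_\mu\ge\mu^\theta v$ for $v$ solving a fixed auxiliary problem, and then Picone's inequality applied to $(-\Delta)^s u_\mu\ge u_\mu^p$ yields
\begin{equation*}
\frac{a_{N,s}}{2}\iint_{D_\Omega}\frac{|\phi(x)-\phi(y)|^2}{|x-y|^{N+2s}}\,dx\,dy
\;\ge\;\int_\Omega u_\mu^{\,p-1}\phi^2\,dx
\;\ge\;\mu^{\theta(p-1)}\int_\Omega v^{\,p-1}\phi^2\,dx ,
\end{equation*}
whence $\mu^{\theta(p-1)}\le m_1$ with $m_1$ a Rayleigh-type quotient that is finite because $v\in L^\infty(\Omega)$. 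If you want to follow your own line, you would need to establish and justify the Keller--Osserman estimate separately; otherwise the Picone comparison is the cleaner and correct tool here.
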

The existence of a second positive solution in the cases (i) and (ii) for $\mu$ small enough is easy to obtain. The result for all $\mu<\mu^*$  by a direct method seems to be an open problem.
\end{remark}

\end{document}